\newcommand{\eps}{\varepsilon}
\newcommand{\N}{\mathbb{N}}
\newcommand{\R}{\mathbb{R}}
\newcommand{\Rn}{\R^n}
\newcommand{\nat}{\in\N}
\newcommand{\ny}{\nu}
\newcommand{\my}{\mu}
\newcommand{\phii}{\varphi}
\newcommand{\al}{\alpha}
\newcommand{\col}{\colon}
\newcommand{\Om}{\Omega}
\newcommand{\na}{\nabla}
\newcommand{\Laplace}{\Delta}
\newcommand{\Lap}{\Laplace}
\newcommand{\wbar}{\overline{w}}
\newcommand{\utilde}{\widetilde{u}}
\newcommand{\vtilde}{\widetilde{v}}
\newcommand{\Ctilde}{\widetilde{C}}
\newcommand{\kappatilde}{\widetilde{\kappa}}
\newcommand{\kappahat}{\widehat{\kappa}}
\newcommand{\ubar}{\overline{u}}
\newcommand{\ybar}{\overline{y}}
\newcommand{\rand}{\del\Omega}
\newcommand{\Ombar}{\overline{\Om}}
\newcommand{\dom}{\del \Om}
\newcommand{\amrand}{|_{\rand}}
\newcommand{\del}{\partial}
\newcommand{\delt}{\del_t}
\newcommand{\delny}{\partial_\ny}
\newcommand{\intom}{\int_\Om}
\newcommand{\intdom}{\int_{\dom}}
\newcommand{\intnT}{\int_0^T}
\newcommand{\intnt}{\int_0^t}
\newcommand{\intntau}{\int_0^\tau}
\newcommand{\intninf}{\int_0^\infty}
\newcommand{\ds}{\mbox{ds}}
\newcommand{\Tmax}{T_{max}}
\newcommand{\Liom}{L^\infty(\Om)}
\newcommand{\Lqom}{L^q(\Om)}
\newcommand{\Lpom}{L^p(\Om)}
\newcommand{\Ldom}{L^3(\Om)}
\newcommand{\Lzom}{L^2(\Om)}
\newcommand{\Wezom}{W^{1,2}(\Om)}
\newcommand{\ntom}{\Om\times(0,T)}
\newcommand{\Combar}{C(\Ombar)}
\newcommand{\Cninf}{C_0^\infty}
\newcommand{\norm}[2][]{\left\|#2\right\|_{#1}}
\newcommand{\weakto}{\rightharpoonup}
\newcommand{\upto}{\nearrow}
\newcommand{\downto}{\searrow}
\newcommand{\embeddedinto}{\hookrightarrow}
\newcommand{\limsuptTmax}{\limsup_{t\upto \Tmax}}
\newcommand{\set}[1]{\{#1\}}
\newcommand{\setl}[1]{\left\{#1\right\}}
\newcommand{\sub}{\subset}
\newtheorem{theorem}{Theorem}
\numberwithin{theorem}{section}
\newtheorem{lemma}[theorem]{Lemma}
\newtheorem{corollary}[theorem]{Corollary}
\newtheorem{proposition}[theorem]{Proposition}
\newtheorem{remark}[theorem]{Remark}
\newtheorem{definition}[theorem]{Definition}
\newtheorem{prop}[theorem]{Proposition}
\newcommand{\ue}{u_\eps}
\newcommand{\ve}{v_\eps}
\newcommand{\ye}{y_\eps}
\newcommand{\une}{u_{0,\eps}}
\newcommand{\vne}{v_{0,\eps}}
\newcommand{\uet}{u_{\eps t}}
\newcommand{\vet}{v_{\eps t}}
\newcommand{\uej}{u_{\eps_j}}
\newcommand{\vej}{v_{\eps_j}}
\renewcommand{\delt}{\frac{d}{dt}}
\newcommand{\Ceins}{A}
\newcommand{\zv}{\theta}
\DeclareMathOperator{\diam}{diam}
\author{Johannes Lankeit\thanks{Institut f\"ur Mathematik, Universit\"at Paderborn, Warburger Str. 100, 33098 Paderborn, Germany; email: \mbox{johannes.lankeit@math.upb.de}}}
\title{Eventual smoothness and asymptotics in a three-dimensional chemotaxis system with logistic source}
\begin{document}
\maketitle
\begin{abstract}
{\bf Abstract.}
 We prove existence of global weak solutions to the chemotaxis system 
\begin{align*}
 u_t&=\Lap u - \na\cdot (u\na v) +\kappa u -\my u^2\\
 v_t&=\Lap v-v+u\nonumber
\end{align*}
 under homogeneous Neumann boundary conditions in a smooth bounded convex domain $\Om\sub \Rn$, for arbitrarily small values of $\my>0$.\\
 Additionally, we show that in the three-dimensional setting, after some time, these solutions become classical solutions, provided that $\kappa$ is not too large. In this case, we also consider their large-time behaviour: We prove decay if $\kappa\leq 0$ and the existence of an absorbing set if $\kappa>0$ is sufficiently small.\\
{\bf Keywords:} chemotaxis, logistic source, existence, weak solutions, eventual smoothness \\
{\bf MSC:} 35K55 (primary), 35B65, 35Q92, 92C17, 35B40
\end{abstract}

\section{Introduction}
Starting from the pioneering work of Keller and Segel \cite{keller_segel_70}, an extensive mathematical literature has grown on the Keller-Segel model and its variants, mathematical models describing chemotaxis, that is the tendency of (micro-)organisms to adapt the direction of their (otherwise random) movement to the concentration of a signalling substance. For a survey see \cite{hillen_painter_09} or \cite{horstmann_03,horstmann_04}.

If biological phenomena where chemotaxis plays a role are modelled on not only small time scales, often growth of the population, whose density we will denote by $u$, must be taken into account.
A prototypical choice to accomplish this is the addition of logistic growth terms $+\kappa u-\my u^2$ in the evolution equation for $u$. 
Unfortunately, it is unclear whether global classical solutions to the chemotaxis-system
\begin{align}
\label{eq:limprob}
 u_t&=\Lap u - \na\cdot (u\na v) +\kappa u -\my u^2\\
 v_t&=\Lap v-v+u\nonumber\\
 \delny u\amrand&=\delny v\amrand=0\nonumber\\
 u(\cdot,0)&=u_0,\quad v(\cdot,0)=v_0,\nonumber 
\end{align}
where $\kappa\in\R$ and $u_0$, $v_0$ are given functions, exist in the smooth, bounded domain $\Om\sub\R^n$ if $n\geq 3$ and $\my>0$ is small.\\
The parabolic-elliptic simplification (where $v_t$ is replaced by $0$) of \eqref{eq:limprob} has been considered in 
\cite{tello_winkler_07}, where -- besides some study of asymptotic behaviour -- it is shown that weak solutions exist for arbitrary $\my>0$ and that they are smooth and globally classical if $\my>\frac{n-2}{n}$. In \cite{winkler_08} the existence of (very) weak solutions is proven under more general conditions. 
Under additional assupmtions, also the existence of a bounded absorbing set in $L^\infty(\Om)$ is shown.

Turning to the parabolic-parabolic system, 
important findings are given in \cite{winkler_10_boundedness}, which assert existence and uniqueness of global, smooth, bounded solutions to \eqref{eq:limprob} under the condition that $\my$ be large enough.

Additional results on existence of global solutions or even of an exponential attractor have been given in the two-dimensional case (see e.g. \cite{osaki_tsujikawa_yagi_mimura_02,osaki_yagi_02}). In this case, global solutions exist for arbitrary $\my>0$.
 
But not only the restriction to dimension $2$, 
also the inclusion of some kind of saturation effect in the chemotactic sensitivity \cite{baghaei_hesaaraki_13}, sublinear dependence of the chemotactic sensitivity on $u$ \cite{xinru_14} or even changing the second equation into one that models the consumption of the chemoattractant 
(as done in \cite{tao_11, tao_winkler_12} for $\kappa=\my=0$)
can make it possible to derive the global existence of solutions. The same can be accomplished by replacement of the secretion term $+u$ in the second equation of \eqref{eq:limprob} by $+\frac u{(1+u)^{1-\beta}}$ with some $0<\beta<\frac9{10}$, which enables the authors of \cite{nakaguchi_osaki_13} to show the existence of attractors in the corresponding dynamical system.

On the other hand, the model 
\begin{align}
\label{eq:exceed}
 u_t&=\eps\Lap u - \na\cdot (u\na v) +\kappa u -\my u^2\\
 0&=\Lap v-v+u\nonumber
\end{align}
has recently been shown to exhibit the following property \cite{lankeit_14}: 
If $\my\in(0,1)$ and the (radially symmetric) initial datum $u_0$ is large in a certain $L^p(\Om)-$space, there exists some finite time such that up to this time  any given threshold will be surpassed by solutions to \eqref{eq:exceed} for sufficiently small $\eps>0$. 
Although this demeanour may be interesting from an emergence-of-pattrens point-of-view and although solutions become very large, it still is not the same as blow-up and, in fact, also occurs in case of bounded solutions, even in space-dimension $1$ \cite{winkler_14_ctexceed}.

In \cite{winkler_11_blowupdespiteloggrowth} it is shown that in another related model, 
\begin{align*}
 u_t=\Lap u-\na\cdot(u\na v)+\kappa u- \my u^\alpha,\qquad
 v_t=\Lap v-m(t) +u,\qquad\qquad
m(t)=\frac1{|\Om|}\intom u,
\end{align*}
blow-up may occur for space-dimension $n\geq 5$ and $1<\alpha<\frac32+\frac1{2n-2}$.

Consequently, the supposition 
that any superlinear growth restriction already signifies the existence of a global, bounded solution 
does not stand unchallenged; and the question whether the above-mentioned results on the presence of global smooth solutions in similar situations 
find their analogue in the case of \eqref{eq:limprob}, the most prototypical chemotaxis system including logistic growth, 
is not clear at all.

In the present article, we therefore investigate the existence of solutions to \eqref{eq:limprob}.
More precisely, we will construct weak solutions in the sense of Definition \ref{def:soln} below.
We shall show that, in dimension $3$ and under a smallness condition on $\kappa$, they become smooth 
after some time, which also excludes finite-time blow-up from then on.
Note that this, however, does not provide any information on a small timescale.

To the aim sketched above we will then consider the approximate system 
\begin{align}
\label{eq:epsprob}
 \uet&=\Lap \ue - \na\cdot (\ue\na \ve) +\kappa \ue -\my \ue^2-\eps \ue^\zv\\
 \vet&=\Lap \ve-\ve+\ue,\nonumber
\end{align}
for $\zv>n+2$ with nonnegative initial values $u_{0,\eps}\in \Combar$ and $v_{0,\eps}\in W^{1,n+1}(\Om)$, 
 where global classical solutions are quickly seen to exist, and derive estimates finally allowing for compactness arguments, which will provide the existence of a weak solution to \eqref{eq:limprob} in Proposition \ref{prop:conv} and Lemma \ref{lem:limesissoln}.

We will employ the estimates from Section \ref{sec:small} 
to conclude that a solution must become small in an appropriate sense after some time. 
This, in turn, will be the starting point for an ODE comparison argument for the quantity $\intom \ue^2(t)+\intom |\na \ve(t)|^4$, whose thereby-obtained 
boundedness in conjunction with estimates on the Neumann heat semigroup results in eventual boundedness and hence in eventual smoothness of $(u,v)$. 
We finally arrive at the following result:
\begin{theorem}
\label{thm:eins}
Let $\Om\sub\Rn$ be a smooth bounded convex domain and $u_0\in \Lzom$, $v_0\in \Wezom$ nonnegative. 
Let $\kappa\in\R, \my>0$.

Then there is a nonnegative weak solution $(u,v)$ (in the sense of Definition \ref{def:soln} below) to \eqref{eq:limprob} with initial data $u_0$, $v_0$.\\
It can be approximated in the sense of $a.e.$-convergence by solutions of \eqref{eq:epsprob}.  
\\
 Furthermore, if $n=3$, for any $\my>0$ there exists $\kappa_0>0$ such that if $\kappa<\kappa_0$, there is $T>0$ such that $u$ and $v$ are a classical solution of $\eqref{eq:limprob}$ for $t>T$.\\
Moreover, in this case, there are $C>0$ and $\al>0$ such that for any $t>T$
\[
 \norm[{C^{2+\al,1+\frac\al2}(\Ombar\times [t,t+1])}]{u} +\norm[{C^{2+\al,1+\frac\al2}(\Ombar\times [t,t+1])}]{v}\leq C.
\]
\end{theorem}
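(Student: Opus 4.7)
The plan is to regularize with the super-quadratic dissipation $-\eps u^\zv$ (with $\zv>n+2$) in \eqref{eq:epsprob}, obtain $\eps$-uniform estimates, and pass to the limit to construct the weak solution claimed in Definition \ref{def:soln}. The stronger smoothness statement for $n=3$ is then a bootstrap argument that becomes available only once we know that $(\ue,\ve)$ is small in a suitable norm after a long enough waiting time.

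First I would establish that for each $\eps\in(0,1)$ the system \eqref{eq:epsprob} admits a global nonnegative classical solution $(\ue,\ve)$: local existence is standard (Amann-type theory, with $v_{0,\eps}\in W^{1,n+1}(\Om)$ giving the needed Sobolev regularity for the drift), and the $-\eps \ue^\zv$ term with $\zv>n+2$ prevents finite-time blow-up by a Moser iteration once one has an $L^1$-bound, which itself follows by integrating the $u$-equation and exploiting $\kappa\ue-\my\ue^2$. I would then collect the $\eps$-independent a priori estimates: integrating the $u$-equation yields $\int_0^T\intom \ue^2 \leq C(T)$ (and thus $\eps\int_0^T\intom \ue^\zv\to 0$); standard smoothing properties of the Neumann heat semigroup applied to $\ve_t=\Lap \ve-\ve+\ue$ upgrade this to a bound on $\nav_\eps$ in $L^{q}(\Om\times(0,T))$ for some $q>2$; then testing the $u$-equation with $\ue$ and estimating $\intom \ue^2|\nav_\eps|$ via Hölder, Gagliardo-Nirenberg and the quadratic absorption provides control of $\intom\ue^2$ and $\int_0^T\intom |\nau_\eps|^2$. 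Combined with analogous parabolic regularity for $\ve$, Aubin-Lions yields a.e.-convergent subsequences $\ue\to u$, $\nav_\eps\to \nav$, and the limit is the desired weak solution; this is what is carried out in Proposition \ref{prop:conv} and Lemma \ref{lem:limesissoln}.

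For the eventual-regularity part with $n=3$, I would use the smallness estimates announced in Section \ref{sec:small} to obtain a time $t_1$ and a small constant $\delta>0$ such that $\intom \ue^2(t_1)+\intom |\nav_\eps(t_1)|^4<\delta$ for all $\eps$. The key is an ODE comparison for
\[
y_\eps(t)=\intom \ue^2(t)+\intom |\nav_\eps(t)|^4.
\]
Testing the $u$-equation with $\ue$ and the $v$-equation with $|\nav_\eps|^2\nav_\eps$ (using $\del_\ny |\nav_\eps|^2\le 0$ on $\dom$, which is where convexity of $\Om$ enters), and bounding the cross term $\intom \ue|\nav_\eps|^2 |D^2 \ve|$ and $\intom \ue^2 |\nav_\eps|^2$ by Hölder, Gagliardo-Nirenberg and Young inequalities (these are the dimension-3 Sobolev embeddings $W^{1,2}\hookrightarrow L^6$ that force $n=3$), would yield an inequality of the type
\[
y_\eps'(t)\leq -c\,y_\eps(t)+C(\my,\kappa)\,y_\eps(t)^{1+\gamma}+C',
\]
for some $\gamma>0$. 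As long as $y_\eps$ starts below a threshold determined by $c$ and $C(\my,\kappa)$, it remains bounded; this is precisely where the smallness condition on $\kappa$ (which lets us take $\delta$ small) enters. Hence $\intom \ue^2$ and $\intom|\nav_\eps|^4$ stay uniformly bounded for all $t\geq t_1$.

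From this point the argument is a standard bootstrap: $\nav_\eps\in L^\infty((t_1,\infty);L^4(\Om))$ together with semigroup estimates for $\ue_t=\Lap\ue-\na\cdot(\ue\nav_\eps)+\kappa\ue-\my\ue^2-\eps\ue^\zv$ (writing the drift as a source and using $\|\heatgr{t}\na\cdot f\|_{L^p}\lesssim t^{-\frac12-\frac{n}{2}(\frac1q-\frac1p)}\|f\|_{L^q}$) gives $\ue\in L^\infty((t_1+\tau,\infty);\Liom)$ for any $\tau>0$, uniformly in $\eps$; feeding this back into the $v$-equation yields $\nav_\eps\in L^\infty$ and then parabolic Schauder estimates applied to both equations on intervals $[t,t+1]$, $t>T$ (for $T$ slightly larger than $t_1$), produce the claimed uniform $C^{2+\al,1+\al/2}$ bound. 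Passing $\eps\to 0$, the limit $(u,v)$ solves \eqref{eq:limprob} classically on $(T,\infty)$. The main obstacle throughout is calibrating the constants in the ODE inequality so that the smallness of $\kappa$ (and hence of $\delta$) genuinely defeats the superlinear term $C\,y^{1+\gamma}$; this is the place where the restriction on $\kappa_0$ is born.
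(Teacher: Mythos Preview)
Your overall architecture matches the paper: regularize with $-\eps\ue^\zv$, extract uniform estimates, pass to the limit for existence; then, for $n=3$, run an ODE comparison on $y_\eps=\intom\ue^2+\intom|\na\ve|^4$, use smallness of $\kappa$ to trap $y_\eps$ below a threshold, and bootstrap via semigroup and Schauder estimates. The eventual-smoothness half of your sketch is essentially the paper's Sections~\ref{sec:small} and~\ref{sec:smooth}.

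There is, however, a genuine gap in your existence argument. You propose to test the $u$-equation with $\ue$ in order to control $\intom\ue^2$ and $\int_0^T\intom|\na\ue|^2$. This does not close in general dimension: after integration by parts the cross term becomes $-\tfrac12\intom\ue^2\Lap\ve$, and with only the space--time $L^2$ bound on $\Lap\ve$ (Lemma~\ref{lem:vderivatives}) and the cubic absorption $-\my\intom\ue^3$, every H\"older/Young splitting demands either an $L^p$ norm of $\ue$ with $p>3$ or an $L^q$ norm of $\Lap\ve$ with $q>2$, neither of which is available a~priori. The paper's device (Lemma~\ref{lem:equi}) is to test with $\log(1+\ue)$ instead: this produces the milder dissipative term $\intom\frac{|\na\ue|^2}{1+\ue}$, which allows the cross term to be controlled entirely by quantities already bounded ($\intom\ue^2$, $\intom|\Lap\ve|^2$, $\intom|\na\ve|^2$). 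From this weighted gradient bound one extracts only $\ue\in L^{4/3}((0,T);W^{1,4/3}(\Om))$ (Lemma~\ref{lem:uinlewevd}), not $L^2(W^{1,2})$. A second, equally important dividend of the $\log$-test is the bound on $\int\ue^2\log(1+\ue)$ and $\eps\int\ue^\zv\log(1+\ue)$, which gives equi-integrability of $\{\ue^2\}$ and $\{\eps\ue^\zv\}$. This is exactly what upgrades a.e.\ convergence of $\uej$ to strong $L^2$ convergence (so that $\uej^2\to u^2$ and $\uej\na\vej\to u\na v$ in $L^1$) and forces $\eps_j\uej^\zv\rightharpoonup 0$ in $L^1$; your assertion that $\eps\int_0^T\intom\ue^\zv\to 0$ does not follow from the mass balance alone---that only gives boundedness.

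A smaller point: global existence for the approximate problem is not obtained by a Moser iteration off the $L^1$ bound. The paper instead uses that the mass balance yields $\eps\int_0^T\intom\ue^\zv\le C$ with $\zv>n+2$ (Lemma~\ref{lem:ulz}), and feeds this space--time $L^\zv$ bound into semigroup estimates (Proposition~\ref{prop:epsbounded}) to get $\ue,\ve,\na\ve\in L^\infty$ on each finite interval.
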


\begin{remark}
 Because we have adopted a weak concept of solution, it is conceivable that solutions to \eqref{eq:limprob} are not unique. 
Investigation of this issue is beyond the scope of the present work and we state the following theorems only for solutions as provided by Theorem \ref{thm:eins}.
\end{remark}

Besides the aforementioned results about attractors, little is known about asymptotic behaviour of solutions to models like \eqref{eq:limprob}. 
Recently, in \cite{winkler_14_global_asymptotic} convergence to the positive homogeneous equilibrium was found for values of $\my$ being sufficiently large as compared to the chemotactic sensitivity. 

The richness of dynamics and pattern formation exhibited by chemotaxis models with growth \cite{painter_hillen_11,kuto_osaki_sakurai_tsujikawa_12} however indicates that any speculation about 
asymptotical behaviour, especially about convergence to homogeneous states, should be backed by rigorous examinations. 

In the situation of \eqref{eq:limprob}, we can summarize the long-term behaviour as follows: 
If $\kappa\leq0$, solutions will converge to the trivial steady state - and any formation of interesting patterns has to take place on intermediary timescales.
\begin{theorem}
 \label{thm:kappazero}
 Let $\my>0$, $\kappa\leq0$. Let $\Om\sub\R^3$ be a smooth bounded convex domain and let $(u,v)$ be the solution to \eqref{eq:limprob} provided by Theorem \ref{thm:eins}. 
 Then 
\[
 (u(t),v(t))\to (0,0) \qquad \text{as } t\to \infty
\]
 in the sense of uniform convergence.
\end{theorem}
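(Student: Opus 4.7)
The plan is to exploit the fact that Theorem~\ref{thm:eins} supplies a classical solution on $(T,\infty)$ together with a uniform bound
\[
\sup_{t>T}\bigl(\norm[C^{2+\al,1+\frac\al2}(\Ombar\times[t,t+1])]{u}+\norm[C^{2+\al,1+\frac\al2}(\Ombar\times[t,t+1])]{v}\bigr)\leq C,
\]
so that in particular $\sup_{t>T}(\|u(t)\|_{\Liom}+\|v(t)\|_{\Liom})<\infty$, and reduce the theorem to a simple decay argument for the spatial integrals. Integrating the first equation of \eqref{eq:limprob} over $\Om$, the Neumann condition kills the divergence term; using $\kappa\le0$ and the Cauchy--Schwarz inequality $\intom u^2\ge|\Om|^{-1}(\intom u)^2$, the function $y(t):=\intom u(t)$ satisfies
\[
 y'(t)\leq-\tfrac{\my}{|\Om|}\,y(t)^2\qquad\text{for }t>T,
\]
hence $y(t)\le\bigl(y(T)^{-1}+\tfrac{\my}{|\Om|}(t-T)\bigr)^{-1}\to0$. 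Combining this $L^1$-decay with the uniform $L^\infty$-bound via Gagliardo--Nirenberg interpolation (or, alternatively, the uniform spatial Hölder bound together with an Arzelà--Ascoli / contradiction argument: a failure of uniform convergence would yield a ball on which $u(t_n)$ is bounded from below, contradicting $\|u(t_n)\|_{\Leom}\to0$) gives $\|u(t)\|_{\Liom}\to0$.

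To pass to $v$, I would write the second equation in variation-of-constants form on $(T,\infty)$,
\[
 v(t)=e^{(t-T)(\Lap-1)}v(T)+\intnt[]^{\mkern-5mu}\! \int_T^t e^{(t-s)(\Lap-1)}u(s)\,ds,
\]
and use that the Neumann heat semigroup is a contraction on $\Liom$. The first term is bounded by $e^{-(t-T)}\|v(T)\|_{\Liom}\to0$; for the second, given $\eta>0$, choose $T_\eta$ so that $\|u(s)\|_{\Liom}\le\eta$ for $s\ge T_\eta$ and split the integral at $T_\eta$, which yields $\|v(t)\|_{\Liom}\le e^{-(t-T_\eta)}\!\cdot\!\text{const}+\eta$ for $t$ large, hence $\|v(t)\|_{\Liom}\to0$ as well.

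I do not expect a genuine obstacle in this argument: the real work has already been carried out in Theorem~\ref{thm:eins}, which delivers both eventual regularity and, crucially, the \emph{time-uniform} Hölder bounds needed to upgrade the $\Leom$-decay of $u$ to uniform decay. The only points requiring mild care are (i) ensuring the ODE computation is applied in the classical regime $t>T$, where the integration by parts is justified by the smooth solution furnished by Theorem~\ref{thm:eins}, and (ii) verifying $L^\infty$-contractivity of $e^{t(\Lap-1)}$ under homogeneous Neumann conditions, a standard fact.
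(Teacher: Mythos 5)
Your argument is correct, but it takes a genuinely different route from the paper's. The paper never works with the equation satisfied by the limit $(u,v)$ directly: it goes back to the approximations $(u_{\eps_j},v_{\eps_j})$ and re-runs the quantitative smallness machinery of Section~\ref{sec:small} --- since $\kappa\le 0<\kappahat$ for \emph{every} $\kappahat>0$, Lemma~\ref{lem:nullstelle} and Proposition~\ref{prop:uepsfinallybounded} give, for each threshold $\vartheta>0$, parameters $\ny$ and $\kappahat$ with $K(\delta_\ny(\kappahat))<\vartheta/3$, hence an eventual bound $\norm[\Liom]{u_{\eps_j}(t)}+\norm[W^{1,\infty}(\Om)]{v_{\eps_j}(t)}\le\vartheta$ uniformly in $j$, which is then transferred to $(u,v)$ via the a.e.\ convergence of Proposition~\ref{prop:conv} and the continuity furnished by Theorem~\ref{thm:eins}. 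You instead exploit only the \emph{conclusion} of Theorem~\ref{thm:eins} and argue on $(u,v)$ itself in its classical regime $t>T$: the mass identity with $\kappa\le0$ gives $\intom u(t)\to0$ at an explicit algebraic rate, the time-uniform spatial H\"older bound upgrades this to $\norm[\Liom]{u(t)}\to0$, and Duhamel together with $\norm[\Liom]{e^{\tau(\Lap-1)}w}\le e^{-\tau}\norm[\Liom]{w}$ yields $\norm[\Liom]{v(t)}\to0$. Your route is shorter, more self-contained once Theorem~\ref{thm:eins} is in hand, and even produces a decay rate for $\intom u$; the paper's route keeps all quantitative work at the level of the smooth approximations (so nothing needs to be justified for the weak limit beyond passing bounds through a.e.\ convergence) and, more importantly, is exactly the mechanism reused for Theorem~\ref{thm:kappapositive}, where $\kappa>0$ and your mass ODE would no longer give decay to zero. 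Two points you should make explicit: start the mass identity and the variation-of-constants formula at some $T'>T$ strictly inside the classical regime (so that $u(T')$, $v(T')$ are smooth and the integration by parts and the Duhamel representation are licit), and invoke the nonnegativity of $u$ from Theorem~\ref{thm:eins} so that $y(t)=\intom u(t)\ge0$ in the ODE comparison (the degenerate case $y(T')=0$ being trivial since $y$ is nonincreasing).
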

\begin{remark}
 The same convergence result can be given for any classical solution of \eqref{eq:limprob} for $\my>0$, $\kappa\leq 0$ in $\Om\sub\R^3$ as above. In this case, only minor adaptions of the proofs become necessary. 
\end{remark}

If $\kappa$ is positive and sufficiently small, we can assert the existence of an absorbing set in the following sense:
\begin{theorem}
 \label{thm:kappapositive}
 Let $\Om\sub\R^3$ be a smooth bounded convex domain. Then for any $\my>0$ there is $\kappa_0>0$ such that for all $\kappa\in(0,\kappa_0)$, there is $\al>0$ and a bounded set $B_{\my,\kappa}\sub (C^{2+\al}(\Ombar))^2$ such that for all $(u_0,v_0)\in\Lzom\times \Wezom$, the corresponding solution $(u,v)$ as constructed in Theorem \ref{thm:eins} admits the 
existence of $T>0$ such that
\[
 (u(t),v(t))\in B_{\my,\kappa} \qquad \text{for all } t>T.
\]
 Furthermore, for each fixed $\my>0$, 
\[
 \diam_{\Liom\times W^{1,\infty}(\Om)}(B_{\my,\kappa})\to 0 \qquad \text{as }\kappa\downto 0.
\]
\end{theorem}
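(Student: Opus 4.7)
The plan is to revisit the construction of Theorem~\ref{thm:eins} and carefully track the dependence of every estimate on the initial data $(u_0,v_0)$ and on the parameter $\kappa$. All the machinery is already in place; the additional work consists in verifying that, for $\kappa\in(0,\kappa_0)$ with $\kappa_0$ sufficiently small, the relevant constants stabilize after a waiting time to values depending only on $\my,\kappa$ and $\Om$.

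First I would extract from the arguments of Section~\ref{sec:small} and from the ODE comparison for $\intom u^2+\intom|\na v|^4$ a differential inequality of the form
\[
 \ddt \Bigl(\intom u^2 + \intom|\na v|^4\Bigr) \leq -c_1(\my)\Bigl(\intom u^2+\intom|\na v|^4\Bigr) + c_2(\kappa,\my,\Om),
\]
valid once $t$ is large enough. The smallness condition $\kappa<\kappa_0$ enters precisely here, in securing $c_1>0$: $\kappa_0$ has to be chosen so that the dissipation $-\my u^2$ beats $+\kappa u$ together with the cross terms produced when $\na\cdot(u\na v)$ is tested against $u$ and against $|\na v|^2\na v$. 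A standard Gronwall comparison then yields
\[
 \limsup_{t\to\infty}\Bigl(\intom u^2(t)+\intom|\na v(t)|^4\Bigr)\leq M_1(\kappa,\my,\Om),
\]
which is the desired absorbing property at the level of $\Lzom\times W^{1,4}(\Om)$, uniformly in initial data.

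Next, bootstrapping to $\Liom$ follows the same chain already used in the proof of Theorem~\ref{thm:eins}: the Neumann heat semigroup applied to the $v$-equation converts $u\in L^q(\Om)$-bounds into $\na v\in \Liom$-bounds, which through $L^p$-testing and Moser iteration in the $u$-equation feed back into $u\in\Liom$-bounds. Since the $\Lzom$- and $W^{1,4}(\Om)$-inputs to this procedure are eventually bounded by $M_1(\kappa,\my,\Om)$, all intermediate constants depend only on $\kappa,\my,\Om$. Standard parabolic Schauder theory on $\Ombar\times[t,t+1]$ for $t$ sufficiently large then upgrades the $\Liom$-bound to the eventual $C^{2+\al}(\Ombar)\times C^{2+\al}(\Ombar)$ bound defining $B_{\my,\kappa}$.

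For the diameter-shrinking statement I would read off the $\kappa$-dependence of $c_1, c_2$ and $M_1$. The homogeneous equilibrium of the logistic kinetics is $\frac\kappa\my$, so one expects $M_1(\kappa,\my,\Om)=O(\kappa^p)$ for some $p>0$ as $\kappa\downto 0$; substituting this into the heat-semigroup bootstrap and tracking powers of the inputs gives polynomial decay of the $\Liom$- and $W^{1,\infty}(\Om)$-norms on $B_{\my,\kappa}$, yielding $\diam_{\Liom\times W^{1,\infty}(\Om)}(B_{\my,\kappa})\to 0$ as $\kappa\downto 0$. The main obstacle is precisely this last bookkeeping: the Neumann heat semigroup estimates themselves are $\kappa$-independent, so one must verify that the $L^p(\Om)$-input carries a factor vanishing with $\kappa$ all the way up the Moser iteration, without any step introducing a $\kappa$-independent additive constant. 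A secondary, harmless nuisance is that the waiting time $T$ until $(u(t),v(t))\in B_{\my,\kappa}$ will depend on $(u_0,v_0)$; this is consistent with the statement of the theorem, which only asserts the existence of some such $T$ for each individual initial datum.
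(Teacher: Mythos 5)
The key step of your plan is not available: a linear absorbing inequality $\ddt\left(\intom u^2+\intom|\na v|^4\right)\leq -c_1(\my)\,y+c_2(\kappa,\my,\Om)$, valid for all (large) times and all data, cannot be derived for this system in dimension $3$. Testing the first equation with $u$ produces the cross term $\intom u\na u\cdot\na v$, and the only way to control it at this energy level is via $\intom u^2|\na v|^2\leq \my\intom u^3+C\intom|\na v|^6$ together with Gagliardo--Nirenberg for $\intom|\na v|^6$, which inevitably generates a term of the order $\left(\intom|\na v|^4\right)^3$; this is exactly why the inequality actually obtained (Proposition \ref{prop:ODIulzvwev}) has the cubic term $\Ceins(1+\frac{1}{4\ny})\,y^3$ on the right. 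Note also that smallness of $\kappa$ does nothing to weaken the chemotactic coupling, so it cannot make the dissipation ``beat the cross terms'' in the way you describe; its role is entirely different, namely to make the mass bound $\limsup_{t\to\infty}\intom u_\eps\leq\kappa_+|\Om|/\my$ (Lemma \ref{lem:ule}) small. With a cubic right-hand side, Gronwall gives nothing for large $y$, and an unconditional linear ODI of your type would yield eventual boundedness for every $\kappa$ and every $\my>0$, i.e.\ essentially settle the open problem the paper is working around. What your argument is missing is the mechanism by which $y$ ever becomes small: in the paper this is a two-step structure, (i) a time-averaging argument based on the space-time estimates and the embedding $W^{2,2}(\Om)\embeddedinto W^{1,4}(\Om)$ showing that $y_\eps$ dips below the largest positive root $\delta_\ny(\kappahat)$ of the comparison polynomial once $\kappa$ is small (Proposition \ref{prop:wirdmalklein}), and (ii) the invariance argument comparing with that constant root (Proposition \ref{prop:staysmall}). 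Without these, the claimed $\limsup$ bound uniform in the initial data is unjustified.

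The diameter statement also does not follow by the rate bookkeeping you propose. For fixed $\ny$ the largest root $\delta_\ny(\kappahat)$ does \emph{not} tend to $0$ as $\kappa\downto 0$ (it remains of size comparable to $\sqrt{4/(\Ceins(1+\frac1{4\ny}))}$ because of the $\ny$-term in the polynomial), so no bound of the form $O(\kappa^p)$ is produced along a fixed choice of parameters, and none is needed: the paper argues qualitatively, choosing for given $\vartheta>0$ first $\ny$ so small that $K(\delta_\ny(\kappahat))<\vartheta/3$ (using $K(\delta)\to0$ from Proposition \ref{prop:uepsfinallybounded}) and then the threshold $\kappatilde(\ny)$ from Lemma \ref{lem:nullstelle}, exactly as in the proof of Theorem \ref{thm:kappazero}. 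Finally, the uniform-in-data eventual bounds must be established for the approximations $(u_\eps,v_\eps)$ and transferred to $(u,v)$ through the a.e.\ and weak-$*$ convergences of Proposition \ref{prop:conv}, since $(u,v)$ is only known to be classical after the waiting time that these $\eps$-level estimates themselves provide; your plan works formally with $(u,v)$ from the start and skips this transfer.
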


Further steps in this direction may hopefully lead to an even more detailled insight, much in the spirit of \cite{nakaguchi_efendiev_08,aida_tsujikawa_efendiev_yagi_mimura_06}, into the long-time behaviour of solutions to \eqref{eq:limprob} in dimension $3$ for small, positive $\my$. 

\begin{remark}
 In the calculations below, we will assume that $\my>0$ is a fixed number.


 Throughout the article, we fix $\Om\sub \Rn$ to be a convex bounded domain with smooth boundary and $u_0\in \Lzom$, $v_0\in \Wezom$ nonnegative. 
 Also, let $\zv$ denote a number satisfying $\zv>n+2$.
\end{remark}

\section{Existence of approximate solutions}
The system \eqref{eq:epsprob} has a unique, global, classical solution. 
At a first glance, the source term $f(s)=\kappa s-\my s^2 -\eps s^\zv$ seems to satisfy the condition $f(s)\leq a-\my_0 s^2$ from Theorem 0.1 of \cite{winkler_10_boundedness}, which would provide a global solution, but as $\my_0$ depends on $a$, this theorem is not applicable in the present case. 
Even tracing the dependece of $\my_0$ on $a$ does not improve the situation.

We therefore use Lemma 1.1 of the same article, 
 which asserts the local existence of a unique classical solution $(\ue,\ve)$ to \eqref{eq:epsprob} for initial data $u_{0\eps}\in \Combar$, $v_{0\eps}\in W^{1,n+1}(\Om)$. More specifically, it implies that this solution exists on a time interval $[0,\Tmax)$, $\Tmax\in(0,\infty]$, and satisfies 
\[
 \limsuptTmax \left(\norm[\Liom]{\ue(t)}+\norm[W^{1,\infty}(\Om)]{\ve(t)}\right) = \infty
\]
if $\Tmax<\infty$. 
Hence, in order to show the global existence of this solution,
it is sufficient to derive boundedness of $\ue, \ve$ and $\na \ve$.

Our means of pursuing this aim will be 

\begin{proposition}
\label{prop:epsbounded}
 Let $q>n+2$. 
 Let $(u,v)$ be a nonnegative classical solution of  
\begin{align*}
 u_t=&\Lap u-\na\cdot(u\na v)+f(u),\\
 v_t=&\Lap v-v+u
\end{align*}
in $\Om\times [0,T]$, $T>0$, with homogeneous Neumann boundary conditions, for initial data $v_0\in W^{1,\infty}(\Om)$, $u_0\in\Liom$ and some function $f$ satisfying $f(s)\leq C_0$ for all $s>0$ with some $C_0>0$. Furthermore, assume that there exists $C>0$ such that $u$ satisfies
\[
 \left(\intnT\intom u^q\right)^{\frac1q}\leq C.
\]
Then $u$, $v$ and $\na v$ are bounded in $\Om\times[0,T]$.
\end{proposition}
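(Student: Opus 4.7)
The plan is a straightforward two-step bootstrap based on the variation-of-constants representation and the well-known $L^p$-$L^q$ smoothing estimates for the Neumann heat semigroup $(e^{t\Lap})_{t\geq 0}$ on $\Om$ (as systematically employed in \cite{winkler_10_boundedness}). The hypothesis $q>n+2$ will appear at both steps as the sharp threshold making a certain time singularity integrable.

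\textbf{Step 1 (boundedness of $\nav$).} Starting from
\[
 \nav(t)=\na e^{t(\Lap-1)}v_0+\int_0^t \na e^{(t-s)(\Lap-1)}u(s)\,ds,
\]
I would bound the first term by $C\|v_0\|_{W^{1,\infty}(\Om)}$. For the integral, I would invoke the standard estimate $\|\na e^{\tau\Lap}w\|_{\Liom}\leq C\tau^{-\frac12-\frac{n}{2q}}\|w\|_{L^q(\Om)}$ and then apply H\"older's inequality in time with conjugate exponents $q$ and $q'=q/(q-1)$. This produces
\[
 \|\nav(t)\|_{\Liom}\leq C+C\left(\int_0^t \tau^{-q'\left(\frac12+\frac{n}{2q}\right)}\,d\tau\right)^{1/q'}\|u\|_{L^q(\Om\times(0,T))}.
\]
The exponent of $\tau$ lies in $(-1,0)$ precisely when $q>n+2$; thus the integral stays bounded uniformly in $t\in[0,T]$.

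\textbf{Step 2 (boundedness of $u$).} From Duhamel,
\[
 u(t)=e^{t\Lap}u_0-\int_0^t e^{(t-s)\Lap}\na\cdot(u\nav)(s)\,ds+\int_0^t e^{(t-s)\Lap}f(u)(s)\,ds.
\]
The first term is dominated by $\|u_0\|_{\Liom}$. For the third, I would write $f(u)=C_0-(C_0-f(u))$ with $C_0-f(u)\geq 0$, so that positivity of $e^{\tau\Lap}$ gives an upper bound of $C_0 T$ in $\Liom$ (this is how the asymmetric assumption $f\leq C_0$, without any lower bound, is exploited). For the convective term, the ``divergence'' smoothing estimate $\|e^{\tau\Lap}\na\cdot w\|_{\Liom}\leq C\tau^{-\frac12-\frac{n}{2q}}\|w\|_{L^q(\Om)}$, applicable to $w=-u\nav$ thanks to $\delny v|_{\dom}=0$, together with Step 1, reduces the estimate to the same form and threshold as in Step 1, yielding $\|u(t)\|_{\Liom}\leq C$.

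\textbf{Step 3 (boundedness of $v$).} With $u$ now uniformly bounded, the maximum principle (or another Duhamel estimate) applied to $v_t-\Lap v+v=u$ immediately yields $\|v(t)\|_{\Liom}\leq \|v_0\|_{\Liom}+\sup_{Q_T}u$, completing the proof.

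The main obstacle is not truly analytical: it consists in identifying the correct sharp version of the semigroup estimates (Neumann boundary conditions, applied to a divergence, with the boundary-compatible vector field $u\nav$) and recognising $q>n+2$ as exactly the condition rendering H\"older's inequality in time applicable to the convolution-type integrals. The asymmetric hypothesis on $f$ causes a small complication, dispatched cleanly by the decomposition $f(u)=C_0-(C_0-f(u))$ combined with positivity of the Neumann heat semigroup.
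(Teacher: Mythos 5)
Your proposal is correct and follows essentially the same route as the paper's proof: a variation-of-constants representation for $\na v$ and $u$, the $L^q$--$L^\infty$ smoothing estimates for the Neumann heat semigroup (including the $e^{\tau\Lap}\na\cdot$ estimate) combined with H\"older's inequality in time, where $q>n+2$ makes the singular kernel integrable to the conjugate power, the order-preservation/positivity of the semigroup to handle $f(u)\le C_0$, and finally a trivial estimate for $v$ once $u$ is bounded. The only cosmetic deviation is dropping the additive constant in the kernel $(1+\tau^{-\frac12-\frac n{2q}})$, which is harmless on the bounded interval $[0,T]$.
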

\begin{proof}
Denote by $C_1, C_2, C_3$ the constants provided by Lemma 1.3 of \cite{winkler_10_aggregationvs} such that 
\begin{equation}
 \label{eq:prop1defc1}
 \norm[\Liom]{\na e^{\tau\Lap}w}\leq C_1\norm[\Liom]{\na w}
\end{equation}
for all $w\in W^{1,\infty}(\Om)$ and
\begin{equation}
 \label{eq:prop1defc2}
 \norm[\Liom]{\na e^{\tau\Lap}w}\leq C_2 ( 1+ \tau^{-\frac12-\frac{n}{2q}})\norm[\Lqom]{w}
\end{equation}
for all $w\in\Lqom$ 
as well as 
\begin{equation}
\label{eq:prop1defc3}
 \norm[\Liom]{e^{\tau\Lap}\na\cdot w}\leq C_3 ( 1+ \tau^{-\frac12-\frac{n}{2q}})\norm[\Lqom]{w}
\end{equation}
for $w\in L^q(\Om,\Rn)$. 
Here, $e^{\tau\Lap}\na\cdot$ denotes the extension of the corresponding operator on $(C_0^\infty(\Om))^n$ to a continuous operator from $L^q(\Om,\Rn)$ to $\Liom$, see \cite[Lemma 1.3]{winkler_10_aggregationvs}. 
Since $(-\frac12-\frac{n}{2q})\cdot\frac{q}{q-1} = -\frac12 \frac{q+n}{q} \frac{q}{q-1}=-\frac12(1+\frac{n+1}{q-1})>-\frac12(1+\frac{n+1}{(n+2)-1})=-1$, 
\begin{equation}
\label{eq:prop1defc4}
 C_4=\left(\intnT (1+(T-s)^{-\frac12-\frac{n}{2q}})^{\frac{q}{q-1}} \ds\right)^{\frac{q-1}{q}}
\end{equation}
is finite.

 Let $t\in[0,T]$. 
Employing \eqref{eq:prop1defc1} and \eqref{eq:prop1defc2} in the variations-of-constants formula for $v$, we obtain 
\begin{align}
\label{eq:prop1navbd}
 \norm[\Liom]{\na v(t)}\leq& \norm[\Liom]{\na e^{t(\Lap-1)} v_0}+\intnt\norm[\Liom]{\na e^{(t-s)(\Lap-1)}u(s) }\ds\notag\\
  \leq& C_1\norm[\Liom]{\na v_0} + C_2\intnt (1+(t-s)^{-\frac12-\frac{n}{2q}})\norm[\Lqom]{u(s)}\ds \notag\\
  \leq& C_1\norm[\Liom]{\na v_0} + C_2\left(\intnt (1+(t-s)^{-\frac12-\frac{n}{2q}})^{\frac{q}{q-1}} \ds\right)^{\frac{q-1}q} \left(\intnt\norm[\Lqom]{u(s)}^q\ds\right)^{\frac1q}\notag\\
 \leq& C_1\norm[\Liom]{\na v_0} +C_2C_4C =: C_5.
\end{align}
We represent also $u$ in terms of the semigroup, use the order-preserving property of the heat semigroup and estimate with the help of \eqref{eq:prop1defc3} to see that 
 \begin{align*}
  0\leq u(t) =& e^{t\Lap} u_0 + \intnt e^{(t-s)\Lap}\na\cdot(u(s)\na v(s))\ds +\intnt e^{(t-s)\Lap}f(u(s))\ds \\
\leq& \norm[\Liom]{u_0}  + C_3\intnt ( 1+ (t-s)^{-\frac12-\frac{n}{2q}})\norm[\Lqom]{u(s)}\norm[\Liom]{\na v(s)}\ds+ TC_0.
\end{align*}
Another application of H\"older's inequality, now in time, in combination with \eqref{eq:prop1navbd} and \eqref{eq:prop1defc4} gives 
\begin{align*}
0\leq u(t) \le& \norm[\Liom]{u_0}  + C_3C_5 \left(\intnt( 1+ (t-s)^{-\frac12-\frac{n}{2q}})^{\frac{q}{q-1}}\right)^{\frac{q-1}q} \left(\intnt \norm[\Lqom]{u(s)}^q\right)^{\frac1q}+ TC_0\\
\leq& \norm[\Liom]{u_0} + C_3C_5C_4C+ TC_0=:C_6.
 \end{align*}
Boundedness of $v$ on $\Om\times[0,T]$ then is an easy consequence:
\begin{align*}
 0\leq v(t)\leq& \norm[\Liom]{e^{t(\Lap-1)}v_0}+\intnt\norm[\Liom]{e^{(t-s)(\Lap-1)} u(s)}\ds
\leq\norm[\Liom]{v_0}+\intnt C_6\ds
\end{align*}
for all $t\in[0,T]$.
\end{proof}

For given nonnegative $u_0\in \Lzom$, 
$v_0\in\Wezom$ and $\eps>0$, we choose $u_{0,\eps}\in \Combar$, $v_{0,\eps}\in W^{1,n+1}(\Om)$ nonnegative such that 
\begin{equation}
\label{eq:init} 
\norm[\Lzom]{u_0-u_{0,\eps}}\leq \min\set{\eps,1},\qquad\norm[\Wezom]{v_0-v_{0,\eps}}\leq \min\set{\eps,1}.
\end{equation}
From now on, by $(\ue, \ve)$ we denote the unique classical solution on $[0,\Tmax)$ to \eqref{eq:epsprob} with initial data $u_{0,\eps}$ and $v_{0,\eps}$. 
Proposition \ref{prop:epsbounded} in conjunction with the next two lemmata and Lemma 1.1 of \cite{winkler_10_boundedness} will show that, indeed, $\Tmax=\infty$.
\\Note that, by \eqref{eq:init}, in the following lemmata estimates in terms of $u_{0,\eps}$ or $v_{0,\eps}$ can be made $\eps$-independent by retreating to the corresponding integral of $u_0$ or $v_0$ plus $1$.


\section{Estimates}
\label{sec:estimates}
In this section we present 
estimates for different quantities involving $\ue$ and $\ve$ respectively, which can be obtained more or less directly from \eqref{eq:epsprob} together with ODE comparison arguments.
In the following, denote 
\[
 \kappa_+:=\max\set{\kappa,0}.
\]

\begin{lemma}
\label{lem:ule}
For any $\eps>0$, the function $\ue$ satisfies
\[
 \intom \ue(t)\leq \max\setl{\intom \une, \frac{\kappa_+|\Om|}{\my}}
\]
for $t>0$. Furthermore, 
\[
 \limsup_{t\to \infty} \intom \ue(t) \leq \frac{\kappa_+|\Om|}{\my},
\]
uniformly in $\eps>0$.
\end{lemma}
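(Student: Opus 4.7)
The plan is to derive a scalar ODI (ordinary differential inequality) for $y(t):=\intom \ue(t)$ and invoke a logistic comparison. First I would integrate the first equation of \eqref{eq:epsprob} over $\Om$: the Neumann boundary condition kills the Laplacian and the divergence term, yielding
\[
 \ddt y(t)=\kappa\intom \ue-\my\intom\ue^2-\eps\intom\ue^\zv.
\]
Since $\ue\geq 0$, the last term is nonpositive and may be dropped, and by the Cauchy--Schwarz (equivalently Jensen's) inequality, $\intom \ue^2\geq\frac{1}{|\Om|}y^2$. Hence
\[
 \ddt y(t)\leq \kappa\, y(t)-\frac{\my}{|\Om|}\, y(t)^2.
\]

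Now I would compare with the logistic ODE $z'=\kappa z-\frac{\my}{|\Om|}z^2$. If $\kappa\leq 0$, then the right-hand side is nonpositive as soon as $y\geq 0$, so $y$ is nonincreasing and stays below $y(0)=\intom\une$, which matches $\max\{\intom\une,\kappa_+|\Om|/\my\}$ since $\kappa_+=0$. If $\kappa>0$, the logistic ODE has the stable equilibrium $y^\ast=\frac{\kappa|\Om|}{\my}=\frac{\kappa_+|\Om|}{\my}$, and a standard comparison argument shows that whenever $y(t)>y^\ast$ we have $\ddt y(t)<0$, so $y$ cannot exceed $\max\{y(0),y^\ast\}$ at any later time. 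In both cases the first asserted bound follows.

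For the asymptotic statement, I would again distinguish cases. If $\kappa\leq 0$, the ODI $\ddt y\leq -\frac{\my}{|\Om|}y^2$ forces $y(t)\to 0$ (integrating gives $y(t)\leq \frac{1}{y(0)^{-1}+\frac{\my}{|\Om|}t}$ when $y(0)>0$), which is consistent with $\limsup y(t)\leq 0=\kappa_+|\Om|/\my$. If $\kappa>0$, then for any fixed $\eta>0$ the set $\{y\geq y^\ast+\eta\}$ is a region where $\ddt y\leq -\delta$ for some $\delta=\delta(\eta)>0$, so $y(t)$ enters and stays in $[0,y^\ast+\eta]$ after some finite time; letting $\eta\downto 0$ gives $\limsup_{t\to\infty}y(t)\leq y^\ast$. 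Crucially, none of the constants in this argument depend on $\eps$, so the $\limsup$ bound is $\eps$-uniform. I do not anticipate any serious obstacle: the only subtlety is the uniform-in-$\eps$ nature, which is automatic because the $\eps$-dependent term $-\eps\intom\ue^\zv$ was discarded to its favourable sign.
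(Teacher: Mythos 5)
Your proposal is correct and follows essentially the same route as the paper: integrate the first equation over $\Om$, discard the favourably signed $-\eps\intom\ue^\zv$ term, bound $\intom\ue^2$ from below by $\frac1{|\Om|}\left(\intom\ue\right)^2$ via H\"older/Cauchy--Schwarz, and conclude by comparison with the logistic ODE. The only cosmetic difference is that the paper works directly with $\kappa_+$ rather than splitting into the cases $\kappa\leq 0$ and $\kappa>0$, and it leaves the ODE comparison (including the uniform-in-$\eps$ limsup) implicit, whereas you spell it out.
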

\begin{proof}
By H\"older's  inequality, $\left(\intom \ue\right)^2\leq \left(\intom \ue^2\right) |\Om|.$
Hence, integration of the first equation of \eqref{eq:epsprob} yields
\begin{align}
\label{eq:firsteq_integrated}
 \left(\intom \ue\right)_t=\intom \uet &\leq 0-0+\kappa_+\intom \ue-\my \intom \ue^2-\eps \intom \ue^\zv\\
&\leq \kappa_+\intom \ue -\frac\my{|\Om|}\left(\intom \ue\right)^2.\nonumber
\end{align}
The claim can be seen by solving the logistic ODE. 
\end{proof}

\begin{lemma}
\label{lem:ulz}
Let $\kappa>0$, let $T>0$. Then there exists $C>0$ such that for all $\eps>0$
\[
 \intnT \intom \ue^2 +\frac{\eps}{\my}\intnT\intom \ue^\zv \;\leq\;  \frac{\kappa_+}{\my} \max\setl{\intom \une, \frac{\kappa_+|\Om|}{\my}} T+\frac1{\my}\intom \une\;\leq\; C. 
\] 
\end{lemma}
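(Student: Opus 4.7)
The plan is to integrate the differential inequality already obtained in the proof of Lemma \ref{lem:ule} in time, and then use Lemma \ref{lem:ule} itself to bound the resulting $\intnT\intom \ue$ term.

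More concretely, recall from \eqref{eq:firsteq_integrated} in the proof of Lemma \ref{lem:ule} that
\[
 \left(\intom \ue\right)_t \leq \kappa_+\intom \ue - \my \intom \ue^2 - \eps\intom \ue^\zv.
\]
Rather than discarding the $\ue^2$ and $\ue^\zv$ terms (as was done there), I would now keep them and instead discard less useful information. Integrating this inequality over $(0,T)$ and using $\ue(T)\geq 0$ yields
\[
 \my \intnT\intom \ue^2 + \eps\intnT\intom \ue^\zv \leq \intom \une - \intom \ue(T) + \kappa_+\intnT\intom \ue \leq \intom \une + \kappa_+\intnT\intom \ue.
\]

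Next I would invoke Lemma \ref{lem:ule} to estimate $\intom \ue(t) \leq \max\set{\intom \une, \frac{\kappa_+|\Om|}{\my}}$ uniformly for $t\in(0,T)$, so that
\[
 \kappa_+\intnT\intom \ue \leq \kappa_+ \max\setl{\intom \une, \tfrac{\kappa_+|\Om|}{\my}} T.
\]
Dividing the previous inequality by $\my$ and combining these two bounds gives precisely the first inequality claimed in the lemma.

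For the second inequality (the absolute bound by $C$), I would appeal to the approximation condition \eqref{eq:init}: since $\norm[\Lzom]{u_0 - u_{0,\eps}}\leq 1$, H\"older's inequality yields $\intom \une \leq (\norm[\Lzom]{u_0}+1)|\Om|^{\frac12}$, which is $\eps$-independent. Thus the right-hand side is bounded by a constant $C=C(u_0,T,\my,\kappa,|\Om|)$ independent of $\eps$.

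I do not anticipate any genuine obstacle here; the result is essentially a direct byproduct of the computation that already appeared in Lemma \ref{lem:ule}, with the only subtlety being the book-keeping needed to make the bound uniform in $\eps$, which is handled by \eqref{eq:init}.
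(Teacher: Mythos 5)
Your argument is exactly the paper's proof: integrate \eqref{eq:firsteq_integrated} in time, discard the nonnegative term $\frac1\my\intom\ue(T)$, and bound $\intnT\intom\ue$ via Lemma \ref{lem:ule}; the $\eps$-uniformity via \eqref{eq:init} is likewise how the paper handles it (see the remark following \eqref{eq:init}). Correct and essentially identical in approach.
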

\begin{proof}
 The estimate
\[
 \intnT \intom \ue^2 +\frac{\eps}{\my}\intnT\intom \ue^\zv\leq \frac{\kappa_+}{\my}\intnT\intom \ue +\frac1{\my}\intom \une-\frac1{\my}\intom \ue(T)\leq \frac{\kappa_+}{\my} \max\setl{\intom \une, \frac{\kappa_+|\Om|}{\my}} T+\frac1{\my}\intom \une
\]
 results from \eqref{eq:firsteq_integrated} after time-integration.
\end{proof}

Also for the second component of the solution some basic estimates are available:
\begin{lemma}
 \label{lem:vlevlz} Let $\kappa\in\R$, $\eps>0$. 
 The inequality
\[
 \intom \ve(t) \leq \max\setl{\intom \une, \frac{\kappa_+|\Om|}{\my}, \intom \vne}
\]
holds as well as 
\begin{align*}
 \intom \ve^2(t)+\intnt\intom \ve^2 \leq \frac{\kappa_+}{\my} \max\setl{\intom \une, \frac{\kappa_+|\Om|}{\my}} t+\frac1{\my}\intom \une+\intom \vne^2
\end{align*}
for all $t>0$.
\end{lemma}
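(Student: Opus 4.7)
Both estimates follow by testing the $\ve$-equation with appropriate multipliers and invoking the previously obtained $L^1$- and $L^2$-bounds for $\ue$.

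For the first inequality, I would integrate the equation $\vet=\Lap\ve-\ve+\ue$ over $\Om$. The boundary term disappears thanks to the homogeneous Neumann condition $\delny\ve=0$, leaving the scalar ODE
\[
\ddt\intom\ve(t)=-\intom\ve(t)+\intom\ue(t).
\]
By Lemma \ref{lem:ule}, $\intom\ue(t)\le M:=\max\setl{\intom\une,\frac{\kappa_+|\Om|}{\my}}$ for every $t>0$. Hence $y(t):=\intom\ve(t)$ satisfies $y'\le-y+M$, and a standard ODE comparison with the constant supersolution $\max\set{y(0),M}$ gives the desired pointwise bound on $\intom\ve(t)$.

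For the second inequality, I would multiply the $\ve$-equation by $\ve$ and integrate over $\Om$, using integration by parts and the Neumann boundary condition:
\[
\tfrac12\ddt\intom\ve^2+\intom|\na\ve|^2+\intom\ve^2=\intom\ue\ve.
\]
Dropping the nonnegative gradient term and applying Young's inequality $\ue\ve\le\tfrac12\ue^2+\tfrac12\ve^2$ yields
\[
\ddt\intom\ve^2+\intom\ve^2\le\intom\ue^2.
\]
Integrating this differential inequality from $0$ to $t$ produces
\[
\intom\ve^2(t)+\intnt\intom\ve^2\le\intom\vne^2+\intnt\intom\ue^2,
\]
and the claim follows once the last integral is estimated by Lemma \ref{lem:ulz} (whose proof via \eqref{eq:firsteq_integrated} actually delivers the stated bound for arbitrary $\kappa\in\R$, with $\kappa_+$ in place of $\kappa$, so applicability is not restricted to $\kappa>0$).

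The only subtlety worth noting is precisely this extension: since Lemma \ref{lem:ulz} is formulated for $\kappa>0$, I would either invoke it directly when $\kappa>0$ or, for $\kappa\le0$, reread its one-line derivation from \eqref{eq:firsteq_integrated} with $\kappa_+=0$ to obtain $\intnt\intom\ue^2\le\frac1\my\intom\une$, which matches the stated right-hand side in that case. Otherwise the argument is entirely routine: semigroup theory is not needed here, only integration by parts, Young's inequality, and scalar ODE comparison.
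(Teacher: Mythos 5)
Your proof is correct and follows essentially the same route as the paper's: integrating the $\ve$-equation and comparing with the resulting linear ODI (via Lemma \ref{lem:ule}) for the first bound, and testing with $\ve$, discarding the gradient term, applying Young's inequality and integrating in time for the second. Your side remark about the $\kappa>0$ hypothesis in Lemma \ref{lem:ulz} is well taken but harmless: the paper invokes that estimate for general $\kappa\in\R$ exactly as you do, since its one-line derivation from \eqref{eq:firsteq_integrated} only involves $\kappa_+$.
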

\begin{proof}
Integrating the second equation of \eqref{eq:epsprob} gives, by Lemma \ref{lem:ule},
 \begin{align*}
 \delt \intom \ve(t)=\intom \vet(t) &= \intom \Lap \ve(t) -\intom \ve(t) +\intom \ue(t)
 \leq -\intom \ve(t) + \max\setl{\intom \une, \frac{\kappa_+|\Om|}{\my}}
\end{align*}
for $t>0$, an ODI for $\intom \ve$, whose solution directly shows
\begin{equation}\label{eq:intvdecay}
 \intom \ve(t)\leq \max\setl{\intom \une, \frac{\kappa_+|\Om|}{\my}} +e^{-t}\intom v_{0,\eps}
\end{equation}
and hence the first part of the assertion. 

As to the second part, we derive an ODI for $\frac12\intom\ve^2$ in quite the same way: 
For $t>0$, by Young's inequality 
\begin{align*}
 \frac12\delt\intom \ve^2(t)=\intom \ve(t)\vet(t)&=\intom \ve(t)\Lap \ve(t) -\intom \ve^2(t)+\intom \ue(t) \ve(t)\\
 &\leq -\intom |\na \ve(t)|^2-\intom \ve^2(t)+\frac12\intom \ue^2(t)+\frac12\intom \ve^2(t)\\
 &\leq -\frac12\intom \ve^2(t)+\frac12\intom \ue^2(t).
\end{align*}
Integrating this with respect to the time variable, so that we can use the bound from Lemma \ref{lem:ulz} on $\ue^2$, we obtain  
\begin{align*}
 \intom \ve^2(t)-\intom \vne^2&\leq -\intnt\intom \ve^2 +\intnt\intom \ue^2\\
 &\leq -\intnt\intom \ve^2 + \frac{\kappa_+}{\my} \max\setl{\intom \une, \frac{\kappa_+|\Om|}{\my}} t+\frac1\my\intom \une
\end{align*}
for any $t>0$ and the claim follows.
\end{proof}

The next lemma gives estimates on the derivatives of $v$.
\begin{lemma}
\label{lem:vderivatives}
Let $\kappa\in\R, \eps>0$. The solutions of \eqref{eq:epsprob} satisfy, for all $t>0$, 
\[
  \left[ \intom |\na \ve(t)|^2 +\frac{1}{\my} \intom \ue(t)\right] \leq \max\setl{ \int|\na \vne|^2+\frac{1}{\my}\intom \une,\frac{\kappa_++1}{\my} \max\setl{\intom \une, \frac{\kappa_+|\Om|}{\my}}}
\]
and
\[
 \intnt\intom |\Lap \ve(t)|^2\leq \frac{\kappa_+}{\my}\max\setl{\intom \une, \frac{\kappa_+|\Om|}{\my}}t+\intom |\na \vne|^2 +\frac{1}{\my} \intom \une.
\]
\end{lemma}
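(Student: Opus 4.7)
The plan is to regard
\[
y(t) := \intom |\na \ve(t)|^2 + \frac{1}{\my} \intom \ue(t)
\]
— the very quantity appearing on the left of the first inequality — as a Lyapunov-type functional for which a linear ODI can be established. First I would differentiate $\intom|\na\ve|^2$, testing the second equation of \eqref{eq:epsprob} against $-\Lap\ve$ and exploiting the Neumann condition $\delny \ve = 0$ to integrate by parts; this yields
\[
\tfrac12 \ddt \intom |\na\ve|^2 = -\intom |\Lap \ve|^2 - \intom |\na\ve|^2 - \intom \ue \Lap\ve.
\]
Integrating the first equation of \eqref{eq:epsprob} over $\Om$ — both the Laplacian and the taxis divergence drop out by Neumann — gives
\[
\frac{1}{\my}\ddt \intom \ue = \frac{\kappa}{\my}\intom \ue - \intom \ue^2 - \frac{\eps}{\my}\intom \ue^\zv.
\]

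The decisive step is to add these identities and apply Young's inequality in the form $-2\intom \ue \Lap\ve \leq \intom|\Lap\ve|^2 + \intom \ue^2$, whereupon the $\intom \ue^2$ contribution from the logistic source cancels exactly and, after replacing $\kappa$ by $\kappa_+$, one is left with
\[
y'(t) \leq -\intom|\Lap\ve|^2 - 2\intom|\na\ve|^2 + \frac{\kappa_+}{\my}\intom \ue - \frac{\eps}{\my}\intom \ue^\zv.
\]

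From here the two claims follow along different tracks. For the first I would discard $-\intom|\Lap\ve|^2$ and the $\eps$-term, and split $-2\intom|\na\ve|^2 = -\intom|\na\ve|^2 - (y(t) - \tfrac{1}{\my}\intom\ue)$ to arrive at
\[
y'(t) \leq -y(t) + \frac{1+\kappa_+}{\my}\intom\ue(t) \leq -y(t) + \frac{1+\kappa_+}{\my}\, M,
\]
where $M := \max\setl{\intom\une, \frac{\kappa_+|\Om|}{\my}}$ is the bound supplied by Lemma \ref{lem:ule}; the elementary comparison $y' \leq -y + c$ then yields $y(t) \leq \max(y(0), c)$, which is exactly the asserted estimate since $y(0) = \intom|\na\vne|^2 + \frac{1}{\my}\intom\une$. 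For the second I would instead drop the $|\na\ve|^2$ term and integrate in time, obtaining
\[
\intnt \intom |\Lap \ve|^2 \leq y(0) - y(t) + \frac{\kappa_+}{\my} \intnt \intom \ue \leq y(0) + \frac{\kappa_+}{\my} M t,
\]
using $y(t) \geq 0$ and $\intnt\intom \ue \leq M t$, again by Lemma \ref{lem:ule}.

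No genuine obstacle is anticipated; the one tactical point worth noting is the symmetric Young coupling $-2\intom\ue\Lap\ve \leq \intom\ue^2 + \intom|\Lap\ve|^2$. A less balanced split would demand pointwise-in-time control of $\intom\ue^2$, which is not yet available at this stage of the paper, whereas the chosen split lets this term be absorbed exactly into the dissipative $-\my\intom\ue^2$ coming from the logistic source.
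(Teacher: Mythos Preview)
Your proposal is correct and follows essentially the same route as the paper: differentiate the combined functional $y(t)=\intom|\na\ve|^2+\tfrac{1}{\my}\intom\ue$, use the symmetric Young split $-2\intom\ue\Lap\ve\leq\intom|\Lap\ve|^2+\intom\ue^2$ so that the quadratic logistic term cancels exactly, and then (i) apply ODE comparison with $y'\leq -y+c$ for the first estimate and (ii) integrate in time for the second. The only cosmetic difference is that you make the testing against $-\Lap\ve$ explicit, whereas the paper compresses this into ``integration by parts and Young's inequality''.
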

\begin{proof}
Integration by parts and Young's inequality result in
 \begin{align}
 \delt\left[ \intom |\na \ve|^2 +\frac{1}{\my} \intom \ue\right]
\leq&-2\intom|\Lap\ve|^2+2\intom\Lap\ve\ve-2\intom\Lap\ve\ue+\frac{\kappa_+}\my\intom\ue-
\intom\ue^2-\frac{\eps}\my\intom\ue^\zv\nonumber\\
\leq&-2\intom|\Lap\ve|^2-2\intom|\na\ve|^2+\intom|\Lap\ve|^2+\intom\ue^2-\intom\ue^2+\frac{\kappa_+}\my\intom\ue\nonumber\\
\leq&-\intom|\Lap\ve|^2-\intom|\na\ve|^2-\frac1\my\intom\ue+\frac{\kappa_++1}{\my}\intom\ue\label{eq:resort_terms_here}
\end{align}
on $(0,\infty)$. From this, we can conclude by Lemma \ref{lem:ule}
\begin{align*}
 \delt\left[ \intom |\na \ve|^2 +\frac{1}{\my} \intom \ue\right]
\leq&-\left[\intom|\na\ve|^2+\frac1\my\intom\ue\right]+\frac{\kappa_++1}{\my}\max\setl{\intom \une, \frac{\kappa_+|\Om|}{\my}}\nonumber
%
\end{align*}
on $(0,\infty)$ and hence the claim follows by comparison with the solution of $y'=-y+const$.
Re-sorting the terms in \eqref{eq:resort_terms_here} moreover gives 
\begin{align*}
 \intom |\Lap \ve(t)|^2 \leq  - \intom |\na \ve(t)|^2 + \frac{\kappa_+}{\my}\intom \ue(t) - \delt\left[ \intom |\na \ve(t)|^2 +\frac{1}{\my} \intom \ue(t)\right]
\end{align*}
for $t>0$, and therefore 
\begin{align*}
 \intnt\intom |\Lap \ve|^2 &\leq   \frac{\kappa_+}{\my}\max\setl{\intom \une, \frac{\kappa_+|\Om|}{\my}}t - \left[ \intom |\na \ve(t)|^2 +\frac{1}{\my} \intom \ue(t)\right] + \intom |\na \vne|^2 +\frac{1}{\my} \intom \une\\
&\leq \frac{\kappa_+}{\my}\max\setl{\intom \une, \frac{\kappa_+|\Om|}{\my}}t+\intom |\na \vne|^2 +\frac{1}{\my} \intom \une.\qedhere
\end{align*}
\end{proof}

The bounds that have been derived so far can be combined to yield 

\begin{lemma}
 \label{lem:equi}
 Let $\kappa\in\R$. For any $T>0$, there exists a constant $C=C(T,\my,\kappa_+,\norm[\Lzom]{u_0},\norm[\Wezom]{v_0})$ 
such that for all $\eps>0$
\[
 \frac12\intnT\intom \frac{|\na\ue|^2}{1+\ue}+\my\intnT\intom \ue^2\log(1+\ue)+\eps\intnT\intom \ue^\zv\log(1+\ue)\leq C.
\]
In particular: The families $\set{\ue^2}_{\eps\in(0,1)}$ and $\set{\eps\ue^\zv}_{\eps\in(0,1)}$ are equi-integrable over $\Om\times(0,T)$.
\end{lemma}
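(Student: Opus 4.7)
The plan is to test the first equation of \eqref{eq:epsprob} with $\log(1+\ue)$. Setting $F(s):=(1+s)\log(1+s)-s$, an antiderivative of $\log(1+\cdot)$ with $F\geq 0$, the time-derivative and diffusion contributions yield $\frac{d}{dt}\intom F(\ue)$ and $-\intom\frac{|\na\ue|^2}{1+\ue}$ respectively (via the Neumann condition on $\ue$ and integration by parts), while the two absorption terms directly produce $-\my\intom\ue^2\log(1+\ue)$ and $-\eps\intom\ue^\zv\log(1+\ue)$. What remains is the chemotactic cross term and the linear source $\kappa\ue$.

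For the cross term, set $G(s):=s-\log(1+s)$, so that $\na G(\ue)=\frac{\ue}{1+\ue}\na\ue$. One integration by parts in $-\intom\log(1+\ue)\na\cdot(\ue\na\ve)$ produces $\intom\na G(\ue)\cdot\na\ve$, and a second integration by parts (using the Neumann condition on $\ve$) recasts this as $-\intom G(\ue)\Lap\ve$. Young's inequality together with $0\leq G(s)\leq s$ then bounds it by $\tfrac12\intom\ue^2+\tfrac12\intom|\Lap\ve|^2$, while the elementary estimate $\log(1+s)\leq s$ controls the source via $\kappa\intom\ue\log(1+\ue)\leq\kappa_+\intom\ue^2$. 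Integrating in time over $(0,T)$, discarding $\intom F(\ue(T))\geq 0$, absorbing $\intom F(\une)$ using $F(s)\leq C(1+s^2)$ together with \eqref{eq:init}, and invoking Lemma \ref{lem:ulz} to bound $\intnT\intom\ue^2$ and Lemma \ref{lem:vderivatives} to bound $\intnT\intom|\Lap\ve|^2$, yields the desired $\eps$-uniform inequality.

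Equi-integrability is then a direct application of the de la Vall\'ee Poussin criterion: for any measurable $A\sub\Om\times(0,T)$ and any $s_0>0$,
\begin{align*}
\int\!\!\int_A\ue^2\;\leq\;s_0^2\,|A|\;+\;\frac{1}{\log(1+s_0)}\int\!\!\int_{\Om\times(0,T)}\ue^2\log(1+\ue),
\end{align*}
and analogously for $\eps\ue^\zv$ (using $\eps\in(0,1)$ to absorb the prefactor on the low-value side); choosing $s_0$ large first and then $|A|$ small gives the result. The main technical point is the double integration by parts of the chemotactic cross term, which circumvents the need to bound $\intom\ue|\na\ve|^2$ directly and instead leans on the $L^2((0,T);L^2(\Om))$-bound for $\Lap\ve$ already established in Lemma \ref{lem:vderivatives}.
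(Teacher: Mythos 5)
Your proof is correct, and its overall skeleton coincides with the paper's: test the first equation of \eqref{eq:epsprob} with $\log(1+\ue)$, use the antiderivative $(1+s)\log(1+s)-s$ for the time term, bound the source via $\log(1+s)\leq s$, and close the estimate with Lemmata \ref{lem:ulz} and \ref{lem:vderivatives}. The one genuine difference is the treatment of the chemotactic cross term. The paper integrates $\intom\frac{\ue\,\na\ue\cdot\na\ve}{1+\ue}$ by parts via the product $\ue\cdot\frac{\ue}{1+\ue}\na\ve$, which produces, besides $-\intom\frac{\ue^2}{1+\ue}\Lap\ve$, a leftover term $-\intom\ue\,\na\ve\cdot\na\bigl(\frac{\ue}{1+\ue}\bigr)$ that then has to be reabsorbed into $\frac12\intom\frac{|\na\ue|^2}{1+\ue}$ and $\frac12\intom|\na\ve|^2$ — this is precisely why the lemma is stated with the factor $\frac12$ on the gradient term and why the paper also invokes the $\intnT\intom|\na\ve|^2$ bound from Lemma \ref{lem:vderivatives}. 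Your observation that $\frac{s}{1+s}$ is the exact derivative of $G(s)=s-\log(1+s)$ makes the second integration by parts exact (boundary term vanishing by $\partial_\nu\ve|_{\partial\Om}=0$), so you get $-\intom G(\ue)\Lap\ve$ with no remainder, need only $0\leq G(s)\leq s$ plus the $L^2$-in-time bound on $\Lap\ve$, and retain the full coefficient $1$ on $\intnT\intom\frac{|\na\ue|^2}{1+\ue}$, which of course implies the stated inequality. Your explicit de la Vall\'ee Poussin splitting for the equi-integrability (with $\eps<1$ absorbed on the sublevel set) is exactly what the paper leaves implicit. In short: same strategy, but a slightly cleaner decomposition of the cross term that dispenses with one of the auxiliary bounds.
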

\begin{proof}
Let $T>0$. Testing the first equation of \eqref{eq:epsprob} with $\log(1+\ue)$ and integrating by parts gives
\begin{align*}
\intom \uet\log(1+\ue) \leq &-\intom\frac{|\na \ue|^2}{1+\ue} +\intom \frac{\ue\na \ve\cdot\na \ue}{1+\ue}+\kappa_+ \intom \ue\log(1+\ue)\\&-\my\intom \ue^2\log(1+\ue)-\eps\intom \ue^\zv\log(1+\ue),
\end{align*}
which, using $((1+\ue)\log(1+\ue)-\ue)_t=\uet\log(1+\ue)$, can be turned into 
\begin{align*}
 \my\intom \ue^2\log(1+\ue)+\eps\intom\ue^\zv\log(1+\ue)\leq&-\intom \frac{|\na \ue|^2}{1+\ue}+\intom \frac{\ue\na \ve\cdot\na \ue}{1+\ue}+\kappa_+\intom \ue\log(1+\ue)\\&-\intom [(1+\ue)\log(1+\ue)-\ue]_t.
\end{align*}
Integration in time hence yields 
\begin{align}
\label{eq:secondintegral}
I\coloneqq& \my\intnT\intom \ue^2\log(1+\ue)+\eps\intnT\intom \ue^\zv\log(1+\ue)\nonumber\\
 \leq& -\intnT\intom \frac{|\na \ue|^2}{1+\ue}+\intnT\intom \frac{\ue\na \ve\cdot\na \ue}{1+\ue} + \kappa_+ \intnT\intom \ue\log(1+\ue)\nonumber\\
&-\intom ((1+\ue(T))\log(1+\ue(T))-\ue(T))+\intom ((1+\une)\log(1+\une)-\une)\nonumber\\
\leq& -\intnT\intom\frac{|\na \ue|^2}{1+\ue}+\intnT\intom \frac{\ue\na \ve\cdot\na \ue}{1+\ue}+\kappa_+ \intnT\intom \ue\log(1+\ue)\nonumber\\
&+\intom \ue(T)+\intom (1+\une)\log(1+\une).
\end{align}

We integrate the second term by parts:
\begin{align*}
 \intnT\intom \frac{\ue\na \ue\cdot\na \ve}{1+\ue}=&-\intnT\intom \frac{\ue^2}{1+\ue}\Lap \ve - \intnT\intom \ue\na \ve\cdot\na\left(\frac \ue{1+\ue}\right)\\=&-\intnT\intom \frac{\ue^2}{1+\ue} \Lap \ve-\intnT\intom \ue\na \ve\cdot\na \ue \frac{1+\ue-\ue}{(1+\ue)^2}.
\end{align*}
Inserting this into \eqref{eq:secondintegral} then results in
\begin{align*}
 I \leq& -\intnT\intom\frac{|\na \ue|^2}{1+\ue}-\intnT\intom\frac{\ue^2}{1+\ue}\Lap \ve-\intnT\intom \frac{\ue\na \ve}{(1+\ue)^{\frac32}}\cdot\frac{\na \ue}{(1+\ue)^{\frac12}}\\&+\kappa_+\intnT\intom \ue\log(1+\ue)+\intom \ue(T)+\intom (1+\une)\log(1+\une),
\end{align*}
where application of the trivial inequality $\frac{\ue}{(1+\ue)^{\frac32}}\leq 1$ gives rise to 
\begin{align*}
I \leq& -\intnT\intom \frac{|\na \ue|^2}{1+\ue}-\intnT\intom \frac \ue{1+\ue} \ue\Lap \ve+\intnT\intom|\na \ve| \frac{|\na \ue|}{\sqrt{1+\ue}}\\ &+\kappa_+\intnT\intom \ue\log(1+\ue)+\intom \ue(T)+\intom(1+\une)\log(1+\une).
\end{align*}
Estimating $\frac{\ue}{1+\ue}\leq 1$, $\log(1+\ue)\leq \ue$ and employing Young's inequality shows
\begin{align*}
I \leq& -\intnT\intom\frac{|\na \ue|^2}{1+\ue}+\frac12\intnT\intom \ue^2+\frac12\intnT\intom |\Lap \ve|^2+\frac12\intnT\intom|\na \ve|^2 +\frac12\intnT\intom \frac{|\na \ue|^2}{1+\ue}\\&+\kappa_+\intnT\intom \ue^2+\intom \ue(T)+\intom (1+\une)\log(1+\une)\\
=&-\frac12\intnT\intom \frac{|\na \ue|^2}{1+\ue}+\left(\kappa_++\frac12\right)\intnT\intom \ue^2+\intom \ue(T)+\frac12\intnT\intom |\Lap \ve|^2+\frac12\intnT\intom |\na \ve|^2\\&+\intom (1+\une)\log(1+\une).
\end{align*}
And if we compile the bounds provided by Lemmata \ref{lem:ulz}, \ref{lem:vderivatives} and \ref{lem:ule}, we arrive at 
\begin{align*}
I+\frac12\intnT\intom \frac{|\na \ue|^2}{1+\ue} \leq& \left(\kappa_++\frac12\right)\frac1\my\left(\kappa_+ \max\setl{\intom \une, \frac{\kappa_+|\Om|}{\my}} T+\intom \une\right)+\max\setl{\intom \une, \frac{\kappa_+|\Om|}{\my}}\\
&+  \frac12\left(\frac{\kappa_+}{\my}\max\setl{\intom \une, \frac{\kappa_+|\Om|}{\my}}T+\intom |\na \vne|^2 +\frac{1}{\my} \intom \une\right)\\
&+\frac12 T \max\setl{ \int|\na \vne|^2+\frac{1}{\my}\intom \une,\frac{\kappa_++1}{\my} \max\setl{\intom \une, \frac{\kappa_+|\Om|}{\my}}} \\
&+{\intom (1+\une)\log(1+\une)=:C.}\hfill\qedhere
\end{align*}
\end{proof}

From the bound on $\intnT\intom \frac{|\na\ue|^2}{1+\ue}$ we can extract information on the behaviour of the spatial gradient of $u$.
\begin{lemma}
 \label{lem:uinlewevd}
Let $\kappa\in\R$. For all $T>0$ there is $C>0$ such that for all $\eps>0$ 
\[
 \norm[L^{\frac43}((0,T), W^{1,\frac43}(\Om))]{\ue}\leq C.
\]
\end{lemma}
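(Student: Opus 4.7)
The plan is to bound the two pieces of the $W^{1,4/3}(\Om)$-norm separately, exploiting the estimate $\intnT\intom \frac{|\na\ue|^2}{1+\ue}\leq C$ from Lemma \ref{lem:equi} together with the $L^2(\Om\times(0,T))$-bound on $\ue$ from Lemma \ref{lem:ulz}. The $L^{4/3}$-bound of $\ue$ itself is immediate: since $s^{4/3}\leq 1+s^2$ for $s\geq 0$, one has $\intnT\intom \ue^{4/3}\leq T|\Om|+\intnT\intom \ue^2\leq C$, so the real task is to control $\intnT\intom |\na\ue|^{4/3}$.

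The idea for the gradient term is the standard trick of introducing the weight $(1+\ue)$ artificially:
\[
 |\na \ue|^{\frac43} = \frac{|\na \ue|^{\frac43}}{(1+\ue)^{\frac23}}\cdot (1+\ue)^{\frac23}.
\]
First I would apply H\"older's inequality in space with exponents $\frac32$ and $3$, giving
\[
 \intom |\na \ue|^{\frac43} \leq \left(\intom \frac{|\na \ue|^2}{1+\ue}\right)^{\frac23}\left(\intom (1+\ue)^2\right)^{\frac13},
\]
and then H\"older's inequality in time, again with exponents $\frac32$ and $3$, to obtain
\[
 \intnT\intom |\na \ue|^{\frac43} \leq \left(\intnT\intom \frac{|\na \ue|^2}{1+\ue}\right)^{\frac23}\left(\intnT\intom (1+\ue)^2\right)^{\frac13}.
\]
At this point Lemma \ref{lem:equi} takes care of the first factor and Lemma \ref{lem:ulz} (together with the trivial bound $\intnT\intom 1=T|\Om|$) takes care of the second, so the right-hand side is bounded uniformly in $\eps\in(0,1)$ by a constant depending only on $T$, $\my$, $\kappa_+$, $\norm[\Lzom]{u_0}$ and $\norm[\Wezom]{v_0}$.

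Combining the two estimates yields $\intnT\intom \ue^{\frac43} + \intnT\intom |\na \ue|^{\frac43}\leq C$, which is exactly $\norm[L^{\frac43}((0,T),W^{1,\frac43}(\Om))]{\ue}^{\frac43}\leq C$ as claimed. No step is really an obstacle here; the only thing that might need a brief comment is the choice of exponents in H\"older, which is dictated by the requirement that the power of $1+\ue$ multiplying $|\na \ue|^{\frac43}\cdot (1+\ue)^{-\frac23}$ match the available $L^2$ control on $\ue$ after raising to the third power in space and in time.
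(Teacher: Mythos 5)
Your argument is correct and follows essentially the same route as the paper: split the $W^{1,\frac43}$-norm into the zero-order and gradient parts, write $|\na\ue|^{\frac43}=\frac{|\na\ue|^{\frac43}}{(1+\ue)^{\frac23}}(1+\ue)^{\frac23}$, and control the two resulting quantities by Lemma \ref{lem:equi} and Lemma \ref{lem:ulz}. The only cosmetic difference is that the paper absorbs the weight via Young's inequality (and bounds $\intnT\intom\ue^{\frac43}$ by H\"older), whereas you use H\"older in space and time (and $s^{\frac43}\leq 1+s^2$); the resulting constants are equivalent and the bounds are uniform in all $\eps>0$, as required.
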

\begin{proof}
 Denote by $C_1$ the constant provided by Lemma \ref{lem:ulz}
 and by $C_2$ that of Lemma \ref{lem:equi}. Then, by H\"older's and Young's inequalities,
\begin{align*}
 \norm[L^{\frac43}((0,T), W^{1,\frac43}(\Om))]{\ue}^{\frac43} =& \intnT\norm[W^{1,\frac43}(\Om)]{\ue}^{\frac43}=\intnT\left(\intom \ue^{\frac43}+\intom|\na\ue|^{\frac43}\right)\\
\leq&\left(\intnT\intom\ue^2\right)^{\frac23}(|\Om|T)^{\frac13}+\intnT\intom \frac{|\na \ue|^{\frac43}}{(1+\ue)^{\frac23}}(1+\ue)^{\frac23}\\
\le& C_1^{\frac23}(|\Om|T)^{\frac13}+\frac23\intnT\intom\frac{|\na\ue|^2}{1+\ue}+\frac13\intnT\intom(1+\ue)^2\\
\le&C_1^{\frac23}(|\Om|T)^{\frac13}+\frac43 C_2+\frac23 T |\Om| + \frac23 C_1 =: C.\qedhere
\end{align*}
\end{proof}

In order to gain convergence results from Aubin-Lions-type lemmas, we need 
some information on the time derivative. The following lemma provides this kind of information.
\begin{lemma}
 \label{lem:uetbd}
 Let $\kappa\in\R$ and $T>0$. Then there is $C>0$ such that for all $\eps>0$ 
\[
 \norm[L^1((0,T),(W^{2,\infty}(\Om))^*)]{\uet}\leq C.
\]
\end{lemma}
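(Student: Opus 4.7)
The plan is to estimate $\uet$ in the dual norm via the PDE itself. For each $\phi\in W^{2,\infty}(\Om)$ with $\norm[W^{2,\infty}(\Om)]{\phi}\le 1$, I would write
\[
 \intom \uet\,\phi = \intom (\Lap\ue)\phi - \intom \na\cdot(\ue\na\ve)\,\phi + \kappa\intom \ue\phi - \my\intom\ue^2\phi - \eps\intom\ue^\zv\phi,
\]
integrate by parts in the diffusion and drift terms (the boundary contributions vanish thanks to the homogeneous Neumann conditions on $\ue$ and $\ve$), and obtain
\[
 \left|\intom\uet\,\phi\right| \le \norm[\Liom]{\na\phi}\intom|\na\ue| + \norm[\Liom]{\na\phi}\intom\ue|\na\ve| + |\kappa|\norm[\Liom]{\phi}\intom\ue + \my\norm[\Liom]{\phi}\intom\ue^2 + \eps\norm[\Liom]{\phi}\intom\ue^\zv.
\]
Taking the supremum over admissible $\phi$ shows that $\norm[(W^{2,\infty}(\Om))^*]{\uet(t)}$ is controlled by the sum of the five spatial integrals on the right.

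Next, I would integrate in time over $(0,T)$ and check that each of those five integrals lies in $L^1(0,T)$ with a bound uniform in $\eps$. The integrals $\intom\ue$, $\intom\ue^2$ and $\eps\intom\ue^\zv$ are handled directly by Lemmata \ref{lem:ule} and \ref{lem:ulz}. For $\intom|\na\ue|$ I would use Lemma \ref{lem:uinlewevd} (or, alternatively, factor $|\na\ue| = \frac{|\na\ue|}{\sqrt{1+\ue}}\sqrt{1+\ue}$ and combine Cauchy--Schwarz with Lemmata \ref{lem:equi} and \ref{lem:ule}) to conclude that $\intom|\na\ue|$ is bounded in $L^{4/3}(0,T) \subset L^1(0,T)$. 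Finally, for the cross term I would use Cauchy--Schwarz in space,
\[
 \intom \ue|\na\ve| \le \left(\intom\ue^2\right)^{1/2}\left(\intom|\na\ve|^2\right)^{1/2},
\]
and note that $\intom\ue^2\in L^1(0,T)$ by Lemma \ref{lem:ulz} while $\intom|\na\ve|^2$ is uniformly bounded in time by Lemma \ref{lem:vderivatives}; hence $\intom\ue|\na\ve|$ is even in $L^2(0,T)\subset L^1(0,T)$.

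Summing the five $L^1(0,T)$-bounds yields the desired estimate $\norm[L^1((0,T),(W^{2,\infty}(\Om))^*)]{\uet}\le C$ with $C$ independent of $\eps$. No real obstacle is expected: the only point requiring care is making sure that the drift term is controlled by already established estimates, which is guaranteed because the pairing with $\na\phi$ allows $\na\ve$ to enter only linearly rather than via second derivatives.
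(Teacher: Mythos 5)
Your argument is correct, and all the bounds you invoke are available at this point in the paper, but it differs from the paper's proof in how the diffusion term is treated. The paper integrates by parts \emph{twice}, pairing $\ue$ with $\Lap\phii$ (this is exactly why the dual of $W^{2,\infty}(\Om)$ appears), estimates the drift term after one integration by parts via Young's inequality as $\frac12\intom\ue^2+\frac12\intom|\na\ve|^2$, and thus gets by with only Lemmata \ref{lem:ule}, \ref{lem:ulz} and \ref{lem:vderivatives} --- no gradient information on $\ue$ is needed at all. You instead integrate by parts only once, so you must control $\intnT\intom|\na\ue|$, which you do correctly via Lemma \ref{lem:uinlewevd} (or equivalently the Cauchy--Schwarz factorization through $\intnT\intom\frac{|\na\ue|^2}{1+\ue}$ from Lemma \ref{lem:equi}); your Cauchy--Schwarz treatment of $\intom\ue|\na\ve|$ with Lemmata \ref{lem:ulz} and \ref{lem:vderivatives} is also fine. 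What your route buys is a slightly stronger conclusion: since only $\na\phii$ and $\phii$ ever appear, you in fact bound $\uet$ in $L^1((0,T);(W^{1,\infty}(\Om))^*)$, which implies the stated estimate; moreover, your single integration by parts only produces boundary terms carrying $\delny\ue$ and $\delny\ve$, which vanish by the Neumann conditions, whereas moving two derivatives onto $\phii$ requires a word about the boundary term $\intdom \ue\,\delny\phii$ since test functions in the dual norm need not satisfy a Neumann condition. What the paper's route buys is economy: it does not rely on the gradient estimates of Lemmata \ref{lem:equi}--\ref{lem:uinlewevd}, only on the elementary mass and $L^2$ bounds.
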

\begin{proof}
Definition of the norm and integration by parts in \eqref{eq:epsprob} lead us to 
 \begin{align*}
  \intnT&\sup_{\norm[W^{2,\infty}(\Om)]{\phii}\leq 1}\left|\intom \uet\phii\right|\\
&\leq\intnT \sup_{\norm[W^{2,\infty}(\Om)]{\phii}\leq 1}\left(\left|\intom \ue\Lap\phii \right|+\left|\intom \ue\na\ve\cdot\na\phii\right|+\left|\kappa\intom \ue\phii\right|+\my\left|\intom \ue^2\phii\right|+\eps\left|\intom \ue^\zv\phii\right|\right), 
 \end{align*}
 where we can use $\norm[W^{2,\infty}(\Om)]{\phii}\leq 1$ and Young's inequality to see
 \begin{align*}
  \norm[L^1((0,T),(W^{2,\infty}(\Om))^*)]{\uet}\leq \intnT\left(\intom\ue+\frac12\intom\ue^2+\frac12\intom|\na\ve|^2+|\kappa|\intom\ue+\my\intom\ue^2+\eps\intom\ue^\zv\right)
 \end{align*}
 and infer boundedness of this norm, independent of $\eps$, 
 from Lemmata \ref{lem:ule}, \ref{lem:ulz} and \ref{lem:vderivatives}.
\end{proof}

The space in which the spatial gradient is known to be bounded can be improved if a bound on $u$ is assumed.
\begin{lemma}
 \label{lem:ueinwezifbd}
 Let $\kappa\in\R$ and let $[T_1,T_2]$ be an interval such that there exists a constant $M$ satisfying 
\[
 \norm[\Liom]{\ue(t)}\leq M
\]
for all $t\in[T_1,T_2]$ and $\eps>0$. 
Then there is $C>0$ such that for all $\eps>0$ 
\[
 \norm[L^2((T_1,T_2);\Lzom)]{\na\ue}\leq C.
\]
\end{lemma}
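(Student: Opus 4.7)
The plan is to run a standard $L^2$--energy estimate on $\ue$, using the assumed uniform $L^\infty$--bound to absorb the nonlinear chemotactic coupling into a controllable right-hand side. Concretely, I would multiply the first equation of \eqref{eq:epsprob} by $\ue$ and integrate over $\Om$, obtaining
\[
 \frac12\ddt\intom \ue^2 + \intom|\na\ue|^2 = \intom \ue\na\ue\cdot\na\ve + \kappa\intom\ue^2 - \my\intom\ue^3 - \eps\intom \ue^{\zv+1}
\]
on $(T_1,T_2)$, where the last two terms are nonpositive and may be dropped.

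Next I would estimate the cross term. By Young's inequality together with the hypothesis $\ue\le M$,
\[
 \left|\intom\ue\na\ue\cdot\na\ve\right| \le \tfrac12\intom|\na\ue|^2 + \tfrac{M^2}{2}\intom|\na\ve|^2,
\]
so that
\[
 \frac12\ddt\intom \ue^2 + \frac12\intom|\na\ue|^2 \le \frac{M^2}{2}\intom|\na\ve|^2 + \kappa_+ M^2 |\Om|
\]
pointwise in $t\in(T_1,T_2)$. Integrating from $T_1$ to $T_2$ and discarding the nonnegative term $\tfrac12\intom\ue(T_2)^2$ gives
\[
 \frac12\int_{T_1}^{T_2}\intom |\na\ue|^2 \le \tfrac12 M^2|\Om| + \tfrac{M^2}{2}\int_{T_1}^{T_2}\intom |\na\ve|^2 + \kappa_+ M^2|\Om|(T_2-T_1).
\]

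The remaining task is to bound $\int_{T_1}^{T_2}\intom|\na\ve|^2$ independently of $\eps$. This is immediate from Lemma \ref{lem:vderivatives}, which yields a pointwise-in-time bound on $\intom|\na\ve(t)|^2$ in terms of the $\eps$-independent quantities $\norm[\Wezom]{v_0}$, $\norm[\Lzom]{u_0}$, $\my$, $\kappa_+$ (using the convention \eqref{eq:init} to pass from $u_{0,\eps},v_{0,\eps}$ to $u_0,v_0$); after time-integration this contributes a term proportional to $T_2-T_1$.

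There is no real obstacle here: the lemma is a quick consequence of a standard energy identity once the chemotactic term has been split by Young, and the uniform $L^\infty$--bound on $\ue$ is precisely what makes the cross term controllable. The only point of care is to ensure the right-hand side is genuinely $\eps$-independent, which is guaranteed by the previously established estimates and by the choice of approximating data in \eqref{eq:init}.
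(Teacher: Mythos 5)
Your argument is correct, but it follows a different route from the paper. You re-derive the energy identity obtained by testing the first equation of \eqref{eq:epsprob} with $\ue$ (essentially Lemma \ref{lem:dtuepsquadrat}), absorb the chemotactic term via Young's inequality using $\ue\le M$, control the remaining $\int_{T_1}^{T_2}\intom|\na\ve|^2$ by the pointwise-in-time bound of Lemma \ref{lem:vderivatives}, and handle the initial term $\frac12\intom\ue^2(T_1)\le\frac12 M^2|\Om|$ again via the $L^\infty$ hypothesis; all constants are $\eps$-independent thanks to \eqref{eq:init}, so the conclusion holds. The paper instead gets the result in one line from Lemma \ref{lem:equi}: since $\int\!\!\int\frac{|\na\ue|^2}{1+\ue}\le\Ctilde$ uniformly in $\eps$, the bound $\ue\le M$ gives $\int_{T_1}^{T_2}\intom|\na\ue|^2\le(1+M)\Ctilde$. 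The paper's route is shorter because it recycles an estimate already in hand and uses the $L^\infty$ bound only to remove the weight $\frac1{1+\ue}$; your route is more self-contained as a standard energy argument (and also tracks how the constant depends on $M$, $T_2-T_1$ and $\kappa_+$ explicitly), at the cost of invoking Lemma \ref{lem:vderivatives} and redoing a computation the paper already has. Either way the statement is proved; only a brief explicit mention that $\frac12 M^2|\Om|$ comes from bounding $\frac12\intom\ue^2(T_1)$ would make your write-up fully airtight.
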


\begin{proof}
 By Lemma \ref{lem:equi} we can find $\Ctilde>0$ such that 
\[
 \intnT\intom \frac{|\na \ue|^2}{1+\ue} \leq \Ctilde
\]
for all $\eps>0$, ergo, setting $C=(1+M)\Ctilde$, 
\[
 \int_{T_1}^{T_2}\intom |\na\ue|^2\leq \int_{T_1}^{T_2}\intom \frac{1+M}{1+\ue}|\na\ue|^2\leq(1+M)\Ctilde=C.\qedhere
\]
\end{proof}
Under similar conditions, also the time derivative is bounded in a better space.
\begin{lemma}
 \label{lem:uetlzwezdual}
 Let $\kappa\in\R$ and let $[T_1,T_2]$ be an interval such that there exists a constant $M$ satisfying
 \[
  \norm[\Liom]{\ue(t)}+\norm[W^{1,\infty}(\Om)]{\ve(t)}\leq M 
 \]
 for all $t\in[T_1,T_2]$ and $\eps>0$.
 Then there is $C>0$ such that for all  $\eps>0$
 \[
  \norm[L^2((T_1,T_2);(W^{1,2}(\Om))^*)]{\uet}\leq C.
 \]
\end{lemma}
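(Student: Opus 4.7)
The plan is to test the first equation of \eqref{eq:epsprob} against an arbitrary $\phii\in W^{1,2}(\Om)$ with $\norm[W^{1,2}(\Om)]{\phii}\leq 1$, estimate the resulting terms in the dual norm, and then integrate in time, making use of the $L^2$-bound on $\na\ue$ furnished by Lemma~\ref{lem:ueinwezifbd}.

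More concretely, for fixed $t\in[T_1,T_2]$ and $\phii\in W^{1,2}(\Om)$ with $\norm[W^{1,2}(\Om)]{\phii}\leq 1$, integration by parts in \eqref{eq:epsprob} yields
\[
 \intom \uet\phii = -\intom \na\ue\cdot\na\phii + \intom \ue\na\ve\cdot\na\phii +\kappa\intom\ue\phii-\my\intom\ue^2\phii-\eps\intom\ue^\zv\phii.
\]
Here Cauchy-Schwarz controls the first term by $\norm[\Lzom]{\na\ue}\,\norm[\Lzom]{\na\phii}\leq \norm[\Lzom]{\na\ue}$, and the assumed $L^\infty$-bounds $\norm[\Liom]{\ue(t)}+\norm[W^{1,\infty}(\Om)]{\ve(t)}\leq M$ together with Cauchy-Schwarz and $\eps\in(0,1)$ yield
\[
 \left|\intom \ue\na\ve\cdot\na\phii\right|+\left|\kappa\intom\ue\phii\right|+\my\left|\intom\ue^2\phii\right|+\eps\left|\intom\ue^\zv\phii\right|\leq C_1,
\]
with a constant $C_1=C_1(M,\my,\kappa,\zv,|\Om|)$ that does not depend on $\eps\in(0,1)$ (note $\eps M^\zv\leq M^\zv$). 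Taking the supremum over admissible $\phii$ gives
\[
 \norm[(W^{1,2}(\Om))^*]{\uet(t)}\leq \norm[\Lzom]{\na\ue(t)}+C_1.
\]

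Squaring and integrating in time over $[T_1,T_2]$, together with $(a+b)^2\leq 2a^2+2b^2$, delivers
\[
 \intn{T_2}\cdots\quad \text{$\leadsto$}\quad \int_{T_1}^{T_2}\norm[(W^{1,2}(\Om))^*]{\uet(t)}^2\,dt\leq 2\int_{T_1}^{T_2}\intom|\na\ue|^2 +2C_1^2(T_2-T_1).
\]
The first term on the right-hand side is bounded uniformly in $\eps>0$ by Lemma~\ref{lem:ueinwezifbd}, which completes the proof.

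I do not expect any serious obstacle: the only ingredient beyond routine duality bookkeeping is that the highest-order term $\Lap\ue$ costs one derivative after integration by parts, which is precisely what the bound on $\na\ue$ from Lemma~\ref{lem:ueinwezifbd} is designed to absorb; the superlinear terms $\ue^2$ and $\eps\ue^\zv$ are harmless thanks to the assumed pointwise bound $M$ on $\ue$ and the restriction $\eps\in(0,1)$.
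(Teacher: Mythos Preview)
Your argument is correct and follows essentially the same route as the paper: test the first equation against $\phii$, integrate by parts to trade $\Lap\ue$ for $\na\ue$, control that term via Lemma~\ref{lem:ueinwezifbd}, and bound the remaining reaction and cross-diffusion terms by the assumed $L^\infty$-bound $M$. The only cosmetic difference is that the paper pairs $\uet$ directly with a space--time test function $\phii\in L^2((T_1,T_2);W^{1,2}(\Om))$ of unit norm, whereas you first estimate the instantaneous dual norm $\norm[(W^{1,2}(\Om))^*]{\uet(t)}$ and then square-integrate; these are equivalent here. Your explicit remark that $\eps\in(0,1)$ is needed to make the bound on $\eps\ue^\zv$ independent of $\eps$ is a welcome clarification (the paper tacitly uses the same restriction).
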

\begin{proof}
 Let $\phii$ be an element of $L^2((T_1,T_2);(W^{1,2}(\Om))$ with norm $1$.
 
 Let $\Ctilde$ be the bound on $\norm[L^2((T_1,T_2);\Lzom)]{\na \ue}$ provided by Lemma \ref{lem:ueinwezifbd}. Then
 \begin{align*}
  \left|\int_{T_1}^{T_2}\intom \uet\phii\right| \leq& \left|\int_{T_1}^{T_2}\intom \na \ue\cdot\na\phii\right| + \left|\int_{T_1}^{T_2}\intom \ue\na \ve\cdot\na\phii\right|
 +|\int_{T_1}^{T_2}\intom (\kappa \ue - \my \ue^2 - \eps \ue^\zv) \phii |\\
 \leq& \norm[L^2((T_1,T_2);\Lzom)]{\na\ue}\norm[L^2((T_1,T_2);\Lzom)]{\na\phii} \\
&+ (\sup_{t\in[T_1,T_2]} \norm[\Liom]{\na \ve(t)}) \norm[L^2((T_1,T_2);\Lzom)]{\ue} \norm[L^2((T_1,T_2);\Lzom)]{\na\phii}\\
& +\sqrt{(T_2-T_1)|\Om|}\norm[L^\infty(\Om\times(T_1,T_2))]{\kappa\ue+\my\ue^2+\ue^\zv} \norm[L^2((T_1,T_2);\Lzom)]{\phii}\\
\leq& \Ctilde + M\sqrt{(T_2-T_1)|\Om|}M + \sqrt{(T_2-T_1)|\Om|} (|\kappa| M+\my M^2+M^\zv)=:C
 \end{align*}
 and hence boundedness of $\uet$ in $(L^2((T_1,T_2);(W^{1,2}(\Om)))^*$ follows.
\end{proof}

\section{Preservation of smallness}
\label{sec:small}
In the last two lemmata, we have seen that boundedness can provide bounds also for derivatives. It will as well be important in establishing regularization effects. 
Therefore, in this section we will derive this boundedness and to this aim proceed as follows:
At first we will prepare some estimates on $\ye(t):=\intom \ue^2(t)+\intom|\na\ve|^4$.
These will establish that $\ye$ satisfies a differential inequality with a polynomial right hand side;
we will show that this polynomial has a positive root and $\ye$ eventually undermatches its value.
Finally, we will use the bounds just gained to improve them to $L^\infty$-bounds for the solutions under consideration.

At first we state the following easy consequence of Poincar\'e's inequality.
\begin{lemma}
 \label{lem:poincare}
 If we denote $\wbar=\frac1{|\Om|}\intom w$, then 
\[
 \intom w^2\leq C_P\intom|\na w|^2+|\Om|\wbar^2, 
\]
for all $w\in\Wezom$, where $C_P$ is the Poincar\'e-constant of $\Om$, defined by $\intom (w-\wbar)^2\leq C_P\intom |\na w|^2$ for $w\in \Wezom$. 
\end{lemma}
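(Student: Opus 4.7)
The plan is to simply decompose $w$ into its mean and its fluctuation around the mean, and then apply the Poincaré inequality as stated in the hypothesis to the fluctuation part.

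Concretely, I would write $w = (w-\wbar) + \wbar$ and expand the square:
\[
 \intom w^2 = \intom (w-\wbar)^2 + 2\wbar \intom (w-\wbar) + \intom \wbar^2.
\]
The key observation is that the cross term vanishes, since by the definition $\wbar = \frac{1}{|\Om|}\intom w$ we have $\intom (w-\wbar) = \intom w - |\Om|\wbar = 0$. The last term evaluates to $|\Om|\wbar^2$ because $\wbar$ is a constant.

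Applying the Poincaré inequality $\intom (w-\wbar)^2 \leq C_P \intom |\na w|^2$ to the first term on the right-hand side then yields exactly
\[
 \intom w^2 \leq C_P \intom |\na w|^2 + |\Om|\wbar^2,
\]
which is the claim. There is no real obstacle here; the lemma is essentially a bookkeeping restatement of the standard Poincaré inequality that will be convenient in later estimates where one wants to bound $\|w\|_{L^2}$ without requiring mean-zero $w$.
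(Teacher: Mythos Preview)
Your proof is correct and is essentially identical to the paper's own argument: both expand the square, use that the cross term reduces via $\intom w = |\Om|\wbar$, and apply the Poincar\'e inequality to $\intom (w-\wbar)^2$.
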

\begin{proof}
 As announced, this is a direct consequence of Poincar\'e's inequality:
 \[
  C_P\intom|\na w|^2\geq \intom (w-\wbar)^2=\intom w^2 - 2\intom w\wbar+\intom \wbar^2=\intom w^2-2\wbar|\Om|\wbar +|\Om|\wbar^2=\intom w^2-|\Om|\wbar^2.\qedhere
 \]
\end{proof}

Another elementary but useful identity is the following:
\begin{lemma}
\label{lem:hoheableitungen}
Let $w\in C^3(\Om)$. Then
 \[
  \Lap|\na w|^2=2\na w\na\Lap w+2|D^2 w|^2.
 \]
\end{lemma}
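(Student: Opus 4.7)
The plan is to verify the identity by a direct componentwise computation; no deep machinery is required, only carefully applied product and chain rules. I will work with index notation, writing $|\nabla w|^2 = \sum_i (\partial_i w)^2$ and using summation over repeated indices $i,j\in\{1,\dots,n\}$.

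First I would compute the gradient of $|\nabla w|^2$: differentiating in the direction $\partial_j$ yields
\[
 \partial_j |\nabla w|^2 = 2\sum_i (\partial_i w)(\partial_i\partial_j w).
\]
Next, I would take a second derivative $\partial_j$ and expand by the product rule, obtaining
\[
 \partial_j^2 |\nabla w|^2 = 2\sum_i (\partial_i\partial_j w)^2 + 2\sum_i (\partial_i w)(\partial_i\partial_j^2 w).
\]
Then I would sum over $j$ to form $\Lap|\nabla w|^2$. The first group of terms collapses to $2\sum_{i,j}(\partial_i\partial_j w)^2 = 2|D^2 w|^2$, while in the second group I exchange the order of derivatives (permissible since $w\in C^3(\Om)$) to get $2\sum_i(\partial_i w)\,\partial_i\!\left(\sum_j \partial_j^2 w\right) = 2\,\nabla w\cdot\nabla\Lap w$. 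Adding the two contributions yields the claimed formula.

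There is no real obstacle here; the only point requiring care is the commutation of partial derivatives, which is justified by the $C^3$-hypothesis. I would present the calculation in two or three displayed lines with the summation explicit, so that the emergence of the Hessian norm and the term $\nabla w\cdot\nabla\Lap w$ is transparent.
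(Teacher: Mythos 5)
Your computation is correct: the paper states this lemma without proof, treating it as the standard elementary identity, and your componentwise argument (differentiate $|\na w|^2$ twice, split by the product rule into the Hessian term $2|D^2w|^2$ and the term $2\na w\cdot\na\Lap w$ after commuting derivatives, which the $C^3$ hypothesis justifies) is exactly the usual way to verify it. Nothing is missing.
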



In the proof of Lemma \ref{lem:navsechs} we will also make use of the well-known Gagliardo-Nirenberg inequality:
\begin{lemma}
 \label{lem:gagliardonirenberg}
 Let $\Om$ be a bounded Lipschitz domain in $\Rn$, $p,q,r,s\ge 1$, $j,m\in\N_0$ and $\al\in[\frac j m,1]$ satisfying $\frac1p=\frac j m+(\frac1r-\frac mn)\al+\frac{1-\al}q$. Then there are positive constants $C_1$ and $C_2$ such that for all functions $w\in\Lqom$ with $\na w\in L^r(\Om)$, $w\in L^s(\Om)$, 
\[
 \norm[\Lpom]{D^j w}\leq C_1\norm[L^r(\Om)]{D^m w}^\al\norm[L^q(\Om)]{w}^{1-\al}+C_2\norm[L^s(\Om)]{w}.
\]
\end{lemma}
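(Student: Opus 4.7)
The plan is to reduce the inequality to the whole-space case $\Om=\Rn$ and then apply the standard combination of Sobolev embedding with H\"older interpolation.

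As the first step I would use a total extension operator $E\col W^{m,r}(\Om)\to W^{m,r}(\Rn)$, whose existence relies on the Lipschitz regularity of $\del\Om$. On a bounded domain the $W^{m,r}$-norm is equivalent to $\|D^m w\|_{L^r(\Om)}+\|w\|_{L^s(\Om)}$, so the extension is controlled by exactly these two quantities; this is the mechanism that produces the additive $C_2\|w\|_{L^s(\Om)}$ contribution in the target estimate. After applying the (yet to be proved) whole-space version to $Ew$ and restricting to $\Om$ on the left, the stated inequality follows.

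For the whole-space inequality on $\Rn$, the scaling relation
\[
 \frac{1}{p}=\frac{j}{n}+\left(\frac{1}{r}-\frac{m}{n}\right)\al+\frac{1-\al}{q}
\]
is forced a priori by testing with dilations $w_\lam(x)=w(\lam x)$, so no free parameter remains. I would then proceed by the Nirenberg iteration: starting from the prototype bound $\|\na u\|_{L^2(\Rn)}^2\leq C\|\Lap u\|_{L^2(\Rn)}\|u\|_{L^2(\Rn)}$ and its $L^r$ analogues (obtained by integration by parts and H\"older), one interpolates the norms of intermediate derivatives along the ladder $w,\na w,\ldots,D^m w$. This delivers the endpoint $\al=\frac jm$, that is, $\|D^j w\|_{L^{\widetilde p}}\leq C\|D^m w\|_{L^r}^{j/m}\|w\|_{L^{q'}}^{1-j/m}$ with $\widetilde p,q'$ determined by the scaling relation. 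A further H\"older interpolation between $L^{\widetilde p}$ and $L^q$ then opens up the full admissible range $\al\in[\tfrac jm,1]$, with the other endpoint $\al=1$ corresponding to the pure Sobolev embedding $W^{m,r}(\Rn)\hookrightarrow L^{p^*}(\Rn)$ where $\tfrac{1}{p^*}=\tfrac{1}{r}-\tfrac{m}{n}$ (when this is positive).

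The step I expect to be the main obstacle is the Nirenberg iteration at the endpoint $\al=\tfrac jm$: the intermediate integrability exponents have to be tracked carefully to remain in the admissible range, and the structure of the argument must branch according to whether $mr<n$ (where genuine integrability gains occur at each level of the ladder), $mr=n$ (where embeddings into BMO must substitute for $L^\infty$), or $mr>n$ (where one uses $L^\infty$ directly). A secondary subtlety is the density argument allowing the basic estimates to be derived first for $w\in C_c^\infty(\Rn)$, where integration by parts is unobstructed, and then extended to $w$ merely in the stated Sobolev class via approximation and lower semicontinuity of the involved norms.
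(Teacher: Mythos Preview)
The paper does not actually prove this lemma: its entire proof is a reference to Nirenberg's original article. So there is nothing internal to compare against; your sketch is, in effect, an outline of what that citation contains, and the strategy you describe --- Stein extension from the Lipschitz domain to $\Rn$, followed by the Nirenberg ladder for intermediate derivatives and H\"older interpolation to reach the full range of $\al$ --- is the standard route and matches the spirit of the cited source.

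Two remarks on your outline. First, the equivalence you invoke on the bounded domain, that the full $W^{m,r}(\Om)$-norm is controlled by $\|D^m w\|_{L^r(\Om)}+\|w\|_{L^s(\Om)}$, is itself an interpolation statement of the type you are proving (control of intermediate derivatives by the top one plus a low-order norm). This is not a genuine obstruction --- the classical proof handles it by induction on $m$, feeding the already-established lower-order cases back in --- but as written your reduction step hides a small circularity that should be made explicit. Second, you have silently corrected the scaling relation: the statement as printed has $\frac{j}{m}$ where it must be $\frac{j}{n}$. Your version is the right one; since the paper only applies the lemma with $j=0$, the typo is harmless for the application.
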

 \begin{proof}
  See \cite[p.126]{Nirenberg_59}. 
 \end{proof}

We are aiming for an estimate for $\intom |\na \ve|^4$. During the calculations we therefore will have to get rid of integrals of $|\na \ve|^6$. 
The Gagliardo-Nirenberg inequality enables us to replace them by more convenient terms. 
\begin{lemma}
 \label{lem:navsechs}
 Let $n=3$. For any $a>0$ there is $C(a)>0$ such that, for any $\kappa\in\R$, $\eps>0$, 
\[
 \intom |\na \ve|^6 \leq a\intom |\na|\na \ve|^2|^2+C(a)\left[\left(\intom|\na\ve|^4\right)^3+\left(\intom|\na\ve|^4\right)^{\frac32}\right].
\]
\end{lemma}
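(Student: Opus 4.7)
The plan is to rewrite the quantity $\intom|\na\ve|^6$ as $\norm[L^3(\Om)]{w}^3$ with the auxiliary function $w:=|\na\ve|^2$, apply the Gagliardo--Nirenberg inequality of Lemma \ref{lem:gagliardonirenberg} to $w$, and then convert the resulting product of norms into the desired form via Young's inequality. Note that $\norm[L^2(\Om)]{w}^2=\intom|\na\ve|^4$ and $\norm[L^2(\Om)]{\na w}^2=\intom|\na|\na\ve|^2|^2$, so both of the quantities appearing on the right-hand side of the claim have natural interpretations in terms of $w$.

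More concretely, I would apply Lemma \ref{lem:gagliardonirenberg} in the setting $n=3$, $j=0$, $m=1$, $p=3$, $r=q=s=2$. The interpolation identity
\[
\frac{1}{3}=\Bigl(\frac{1}{2}-\frac{1}{3}\Bigr)\al+\frac{1-\al}{2}
\]
forces $\al=\tfrac12$, so that
\[
\norm[L^3(\Om)]{w}\leq C_1\norm[L^2(\Om)]{\na w}^{1/2}\norm[L^2(\Om)]{w}^{1/2}+C_2\norm[L^2(\Om)]{w}
\]
with constants $C_1,C_2>0$ depending only on $\Om$. Cubing this inequality (and using $(x+y)^3\leq 4(x^3+y^3)$) produces
\[
\intom|\na\ve|^6\leq 4C_1^3\left(\intom|\na|\na\ve|^2|^2\right)^{3/4}\left(\intom|\na\ve|^4\right)^{3/4}+4C_2^3\left(\intom|\na\ve|^4\right)^{3/2}.
\]

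To finish, I would apply Young's inequality with conjugate exponents $\tfrac43$ and $4$ to the first term on the right: for arbitrary $a>0$,
\[
4C_1^3\left(\intom|\na|\na\ve|^2|^2\right)^{3/4}\left(\intom|\na\ve|^4\right)^{3/4}\leq a\intom|\na|\na\ve|^2|^2+C'(a)\left(\intom|\na\ve|^4\right)^3,
\]
with a constant $C'(a)$ depending only on $a$ and $C_1$. Setting $C(a):=\max\set{C'(a),4C_2^3}$ yields exactly the claim. The argument requires no information about the equation satisfied by $\ve$ and is uniform in $\kappa\in\R$ and $\eps>0$, as asserted. I do not anticipate any real obstacle; the only subtlety is matching the exponents in Gagliardo--Nirenberg so that the $L^2$-interpolant of $w$ produces precisely the $(\intom|\na\ve|^4)^{3/2}$ remainder term.
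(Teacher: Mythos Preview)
Your proposal is correct and follows essentially the same approach as the paper: apply Gagliardo--Nirenberg (Lemma \ref{lem:gagliardonirenberg}) with $n=3$, $j=0$, $m=1$, $p=3$, $r=q=s=2$, $\alpha=\tfrac12$ to $w=|\na\ve|^2$, cube, and finish with Young's inequality with exponents $\tfrac43$ and $4$. The only cosmetic difference is that the paper uses the cruder estimate $(x+y)^3<8(x^3+y^3)$ where you use $(x+y)^3\leq 4(x^3+y^3)$, which affects nothing.
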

\begin{proof}
For given $j=0$, $m=1$, $\Om$, $p=3$, $r=q=2$, $s=2$, the Gagliardo-Nirenberg inequality (Lemma \ref{lem:gagliardonirenberg}) provides constants $C_1$ and $C_2$ such that for $w\in\Lzom$ and with $\al=\frac12$ 
the inequality 
\[
 \norm[\Ldom]{w}^3\leq 8C_1^3\norm[\Lzom]{\na w}^{\frac32}\norm[\Lzom]{w}^{\frac32}+8C_2^3\norm[\Lzom]{w}^3, 
\]
holds true (where we at the same time have used $(x+y)^3<8(x^3+y^3)$).
Applied to $w=|\na \ve|^2$ 
this means
\[
 \intom|\na\ve|^6\leq 8C_1^3\left(\intom|\na|\na\ve|^2|^2\right)^{\frac34}\left(\intom|\na\ve|^4\right)^{\frac34}+8C_2^3\left(\intom|\na\ve|^4\right)^{\frac32}.
\]
With $p=\frac43$, $q=4$, corresponding to $a>0$ Young's inequality provides $\Ctilde(a)>0$ such that 
\[
 \intom|\na\ve|^6\leq a\intom|\na|\na \ve|^2|^2+\Ctilde(a)\left(\intom|\na\ve|^4\right)^3+8C_2^3\left(\intom|\na\ve|^4\right)^{\frac32}
\]
and the claim results with the choice of $C(a)=\max\set{8C_2^3,\Ctilde(a)}$.
\end{proof}

With the help of Lemma \ref{lem:navsechs}, we separate $\ue$, $\na \ue$ and $\na\ve$ in one of the terms arising from differentiation of $\intom \ue^2$.
\begin{lemma}
 \label{lem:unaunav}
 Let $n=3$. Corresponding to $\my>0$ there exists $C>0$ such that for any $\kappa\in\R$ and $\eps>0$ the estimate
 \[
  \intom\ue\na\ue\cdot\na\ve\leq\frac14\intom |\na\ue|^2+\my\intom\ue^3+\frac12\intom|\na|\na\ve|^2|^2+C\left(\left(\intom|\na\ve|^4\right)^3+\left(\intom|\na\ve|^4\right)^{\frac32}\right)
 \]
holds. 
\end{lemma}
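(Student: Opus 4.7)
The plan is to peel off the three factors $\ue$, $\na\ue$, $\na\ve$ by applying Young's inequality twice in succession and to dispose of the resulting $L^6$-norm of $\na\ve$ with Lemma \ref{lem:navsechs}. Concretely, I would begin by writing $\ue\na\ue\cdot\na\ve=\na\ue\cdot(\ue\na\ve)$ and applying the pointwise Young inequality with the first coefficient already chosen so as to match the prescribed prefactor $\tfrac14$:
\[
 \intom \ue\na\ue\cdot\na\ve \;\leq\; \frac14\intom|\na\ue|^2+\intom\ue^2|\na\ve|^2.
\]

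Next I would use Hölder's inequality with exponents $\tfrac32$ and $3$ on the remaining term to decouple $\ue$ and $\na\ve$:
\[
 \intom\ue^2|\na\ve|^2 \;\leq\; \Bigl(\intom\ue^3\Bigr)^{\!2/3}\Bigl(\intom|\na\ve|^6\Bigr)^{\!1/3},
\]
and then apply a weighted Young inequality (again with exponents $\tfrac32,3$) to this product, with the weight tuned so that the $\ue^3$-contribution carries precisely the constant $\mu$. This yields, with a constant $C_1=C_1(\mu)>0$,
\[
 \intom\ue^2|\na\ve|^2\;\leq\; \mu\intom\ue^3+C_1\intom|\na\ve|^6.
\]

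Finally I would invoke Lemma \ref{lem:navsechs} with the parameter $a=\tfrac{1}{2C_1}$ to control $\intom|\na\ve|^6$. This choice ensures that the contribution of $\intom\bigl|\na|\na\ve|^2\bigr|^2$ comes out with coefficient exactly $\tfrac12$, as required by the statement, and produces a constant $C_2=C_2(\mu)$ in front of the $\bigl(\intom|\na\ve|^4\bigr)^3$ and $\bigl(\intom|\na\ve|^4\bigr)^{3/2}$ terms. Collecting, with $C:=C_1C_2$, one obtains the claimed estimate.

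There is no genuine obstacle: every step is a direct application of an elementary inequality or the previous lemma, and the only bookkeeping is to propagate the prescribed constants $\tfrac14$, $\mu$, $\tfrac12$ correctly through the two Young steps and the choice of $a$ in Lemma \ref{lem:navsechs}. The mild subtlety is the order in which the splits are performed: one must separate $\na\ue$ from the rest \emph{before} introducing $\intom|\na\ve|^6$, since otherwise the final Young step would have to be delayed and the prefactor $\tfrac14$ on $\intom|\na\ue|^2$ would come out dependent on $\mu$.
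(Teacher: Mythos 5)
Your argument is correct and takes essentially the same route as the paper: Young's inequality to split off $\frac14\intom|\na\ue|^2$, a second (weighted) Young step with exponents $\tfrac32,3$ to produce $\my\intom\ue^3+C_1\intom|\na\ve|^6$ (your H\"older-then-Young version is just the integral form of the paper's pointwise estimate), and then Lemma \ref{lem:navsechs} to absorb $\intom|\na\ve|^6$. Your choice $a=\tfrac{1}{2C_1}$ is in fact slightly more careful than the paper's application with $a=\tfrac12$, since it guarantees the coefficient $\tfrac12$ in front of $\intom|\na|\na\ve|^2|^2$ regardless of the size of $C_1$.
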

\begin{proof}
 Double application of Young's inequality yields a constant $\Ctilde>0$ such that 
\[
 \intom\ue\na\ue\cdot\na\ve\leq\frac14\intom|\na\ue|^2+\intom\ue^2|\na\ve|^2\leq\frac14\intom|\na\ue|^2+\my\intom\ue^3+\Ctilde\intom|\na\ve|^6.
\]
Using Lemma \ref{lem:navsechs} 
with $a=\frac12$ to estimate $\intom|\na\ve|^6$ this produces the assertion, with the choice $C=\Ctilde C(\frac12)$.
\end{proof}

The term $\intom |\na |\na \ve|^2|^2$, known to us from Lemma \ref{lem:navsechs}, arises from the following estimate with the ``right'' sign.
\begin{lemma}
\label{lem:dtnavzq}
 Let $\kappa\in\R$, let $q\geq 1$. Then 
\[
 \delt\intom|\na\ve|^{2q}\leq -q(q-1)\intom|\na\ve|^{2q-4}|\na|\na\ve|^2|^2-2q\intom|\na\ve|^{2q}+2q\intom|\na\ve|^{2q-1}|\na \ue|.
\]
\end{lemma}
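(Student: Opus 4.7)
The plan is to differentiate $\intom |\na\ve|^{2q}$ directly under the integral sign, insert the equation $\vet=\Lap\ve-\ve+\ue$, and then use Lemma \ref{lem:hoheableitungen} to rewrite the resulting $\nabla\ve\cdot\nabla\Lap\ve$ term in divergence form amenable to integration by parts. Writing
\[
\delt |\na\ve|^{2q}=2q|\na\ve|^{2q-2}\na\ve\cdot\na\vet=2q|\na\ve|^{2q-2}\na\ve\cdot\na(\Lap\ve-\ve+\ue),
\]
and invoking Lemma \ref{lem:hoheableitungen} in the form $2\na\ve\cdot\na\Lap\ve=\Lap|\na\ve|^2-2|D^2\ve|^2$, I obtain after integration over $\Om$
\begin{align*}
\delt\intom|\na\ve|^{2q}=q\intom|\na\ve|^{2q-2}\Lap|\na\ve|^2-2q\intom|\na\ve|^{2q-2}|D^2\ve|^2\\
-2q\intom|\na\ve|^{2q}+2q\intom|\na\ve|^{2q-2}\na\ve\cdot\na\ue.
\end{align*}

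Next I would integrate the first term by parts, which gives
\[
\intom|\na\ve|^{2q-2}\Lap|\na\ve|^2=-(q-1)\intom|\na\ve|^{2q-4}\bigl|\na|\na\ve|^2\bigr|^2+\intdom|\na\ve|^{2q-2}\delny|\na\ve|^2,
\]
using $\na|\na\ve|^{2q-2}=(q-1)|\na\ve|^{2q-4}\na|\na\ve|^2$. The main obstacle—and the reason the hypothesis of convexity of $\Om$ enters here—is disposing of the boundary integral. Since $\ve$ satisfies homogeneous Neumann boundary conditions and $\Om$ is smooth and convex, the standard pointwise inequality $\delny|\na\ve|^2\leq 0$ on $\rand$ applies, so the boundary term is nonpositive and may be discarded.

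Collecting terms, the $-2q\intom|\na\ve|^{2q-2}|D^2\ve|^2$ contribution is nonpositive and can also be dropped. Finally, Cauchy-Schwarz applied pointwise gives $|\na\ve|^{2q-2}\na\ve\cdot\na\ue\leq|\na\ve|^{2q-1}|\na\ue|$, which yields
\[
\delt\intom|\na\ve|^{2q}\leq-q(q-1)\intom|\na\ve|^{2q-4}\bigl|\na|\na\ve|^2\bigr|^2-2q\intom|\na\ve|^{2q}+2q\intom|\na\ve|^{2q-1}|\na\ue|,
\]
which is exactly the claimed inequality. The only nontrivial ingredient is the convexity-based boundary estimate; the rest is bookkeeping and routine integration by parts.
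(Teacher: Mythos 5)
Your proof is correct and follows essentially the same route as the paper: insert the second equation, apply Lemma \ref{lem:hoheableitungen}, integrate by parts and discard the boundary term via the convexity-based inequality $\delny|\na\ve|^2\leq 0$ on $\rand$, drop the nonpositive Hessian term, and finish with pointwise Cauchy--Schwarz. No substantive differences from the paper's argument.
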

\begin{proof}
Evaluating the derivative and inserting the second equation of \eqref{eq:epsprob} gives
 \[
 \delt\intom|\na\ve|^{2q} 
 =2q\intom|\na\ve|^{2q-2}\na\ve\cdot\na\Lap\ve-2q\intom|\na\ve|^{2q-2}\na\ve\cdot\na\ve+2q\intom|\na\ve|^{2q-2}\na \ve\cdot \na\ue. 
 \]
Here, Lemma \ref{lem:hoheableitungen} and integration by parts eventuate 
\begin{align*}
 \delt\intom|\na\ve|^{2q}=&q\intom|\na\ve|^{2q-2}\Lap|\na\ve|^2-2q\intom|\na\ve|^{2q-2}|D^2\ve|^2-2q\intom|\na\ve|^{2q}+2q\intom|\na\ve|^{2q-2}\na\ve\cdot\na\ue\\
\leq& -q(q-1)\intom|\na\ve|^{2q-4}|\na|\na \ve|^2|^2 - 2q\intom|\na\ve|^{2q}+2q\intom|\na\ve|^{2q-1}|\na\ue|.
\end{align*}
In this step we used convexity of $\Om$ to estimate the boundary integral 
\[
 \intdom |\na \ve|^{2q-2} \na(|\na \ve|^2)\cdot \ny\leq 0 
\]
due to the fact that in convex domains $\left.\delny |\na \ve|^2\right|_{\dom} \leq 0$ follows from $\delny v\amrand=0$, confer \cite[Lemma 3.2]{tao_winkler_12_boundedness_in_a_quasilinear}. 
\end{proof}

The other summand arising in the calculation of $y_\eps'(t)$ can be estimated as follows:
\begin{lemma}
\label{lem:dtuepsquadrat}
 For any $\kappa\in\R$, $\eps>0$, 
 \begin{align*}
  \delt\intom\ue^2\le&-2\intom|\na\ue|^2+2\intom\ue\na\ue\cdot\na\ve+2\kappa\intom\ue^2-2\my\intom\ue^3.
 \end{align*}
\end{lemma}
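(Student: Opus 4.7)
The plan is to test the first equation of \eqref{eq:epsprob} against $2\ue$ and integrate over $\Om$. Writing $\delt \intom \ue^2 = 2\intom \ue\uet$, I would substitute $\uet = \Lap \ue - \na\cdot(\ue\na\ve) + \kappa\ue - \my\ue^2 - \eps\ue^\zv$ and handle each resulting term separately.

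The diffusion term yields $2\intom \ue \Lap\ue = -2\intom |\na\ue|^2$ after integration by parts, where the boundary contribution vanishes because of the homogeneous Neumann boundary condition $\delny \ue\amrand = 0$. The chemotactic term gives $-2\intom \ue\,\na\cdot(\ue\na\ve) = 2\intom \na\ue\cdot(\ue\na\ve) = 2\intom \ue\na\ue\cdot\na\ve$, again using Neumann boundary data to drop the boundary term. The reaction terms contribute $2\kappa\intom \ue^2 - 2\my\intom \ue^3 - 2\eps\intom \ue^{\zv+1}$ directly.

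Assembling these gives an equality
\[
 \delt\intom \ue^2 = -2\intom |\na\ue|^2 + 2\intom \ue\na\ue\cdot\na\ve + 2\kappa\intom \ue^2 - 2\my\intom \ue^3 - 2\eps\intom \ue^{\zv+1},
\]
and since $\ue\geq 0$ (by the maximum principle, as $\ue$ is a classical solution with nonnegative initial data) the last term satisfies $-2\eps\intom \ue^{\zv+1}\leq 0$ and can be discarded to arrive at the stated inequality. There is no genuine obstacle here; this is essentially a direct testing step, whose only subtle ingredient is invoking the Neumann condition to kill the boundary integrals. The nonnegative term $2\eps\intom \ue^{\zv+1}$ is simply sacrificed because it plays no role in the subsequent ODE comparison for $\ye$.
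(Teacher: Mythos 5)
Your proof is correct and follows essentially the same route as the paper: multiply the first equation by $2\ue$, integrate by parts using the Neumann data, and discard the nonpositive term $-2\eps\intom\ue^{\zv+1}$ (the paper writes this term with exponent $\zv$, an inconsequential slip). Nothing further is needed.
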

\begin{proof}
 This results from integration by parts and estimation of the negative last term in 
 \[
  2\intom \ue\uet \leq 2\intom \ue\Lap\ue - 2\intom \ue\na\cdot(\ue\na\ve) + 2\kappa\intom\ue^2-2\my\intom\ue^3-2\eps\intom\ue^\zv.\qedhere
 \]
\end{proof}

We put the estimates that we have found so far to their use and state

\begin{proposition}
 \label{prop:ODIulzvwev}
 Let $n=3$ and $\my>0$. There is a constant $\Ceins>0$ such that for all $\eps>0$, for all $\ny>0$, $\eta\in(0,4]$ and $\kappahat>0$ the following holds: If $\kappa\in\R$ satisfies $\kappa<\kappahat$ and $2\kappa+\eta\leq\frac1{C_P}$, where $C_P $ is the Poincar\'e-constant associated with $\Om$, then the quantity
\begin{equation}\label{eq:defyeps}
 \ye(t):= \intom \ue^2(t)+\intom|\na\ve(t)|^4
\end{equation}
satisfies the differential inequality
\[
 \ye'(t)\leq \ny - \eta y(t) + \Ceins(1+\frac{1}{4\ny}) y^3(t) + \frac{4\kappahat^2|\Om|}{C_P\my^2}=:p(\ye(t))
\]
for all $t>T_0$ with some $T_0=T_0(\my,\kappa,\kappahat)>0$ depending on $\my, \kappa, \kappahat$ only.
\end{proposition}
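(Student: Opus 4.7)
The approach is to combine the ODI's of Lemma~\ref{lem:dtuepsquadrat} and Lemma~\ref{lem:dtnavzq} (applied with $q=2$) into a single differential inequality for $\ye$ and then to systematically absorb the resulting positive terms using the remaining estimates of this section. Adding the two bounds yields
\begin{align*}
 \ye'(t)\le&-2\intom|\na\ue|^2+2\intom\ue\na\ue\cdot\na\ve+2\kappa\intom\ue^2-2\my\intom\ue^3\\
 &-2\intom|\na|\na\ve|^2|^2-4\intom|\na\ve|^4+4\intom|\na\ve|^3|\na\ue|.
\end{align*}
To the chemotactic cross term I would apply Lemma~\ref{lem:unaunav}; doubled, it contributes $\tfrac12\intom|\na\ue|^2$, the cancelling $2\my\intom\ue^3$, a full $\intom|\na|\na\ve|^2|^2$ and a summand proportional to $\big(\intom|\na\ve|^4\big)^3+\big(\intom|\na\ve|^4\big)^{3/2}$.

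Next I would handle $4\intom|\na\ve|^3|\na\ue|$ by Young in the form $4|\na\ve|^3|\na\ue|\le\tfrac12|\na\ue|^2+8|\na\ve|^6$, which together with the previous step leaves precisely $-\intom|\na\ue|^2$; the resulting $8\intom|\na\ve|^6$ is then absorbed via Lemma~\ref{lem:navsechs} with $a=\tfrac18$, whose first summand cancels the surviving $-\intom|\na|\na\ve|^2|^2$ and whose second is again of the type $\big(\intom|\na\ve|^4\big)^3+\big(\intom|\na\ve|^4\big)^{3/2}$. Collecting all such contributions into a single constant $\ctilde=\ctilde(\my)>0$, the estimate has become
\[
 \ye'(t)\le-\intom|\na\ue|^2+2\kappa\intom\ue^2-4\intom|\na\ve|^4+\ctilde\Big[\big(\intom|\na\ve|^4\big)^3+\big(\intom|\na\ve|^4\big)^{3/2}\Big].
\]

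Now Poincaré (Lemma~\ref{lem:poincare}) converts $-\intom|\na\ue|^2$ into $-\tfrac1{C_P}\intom\ue^2+\tfrac{|\Om|}{C_P}\bigl(\tfrac1{|\Om|}\intom\ue\bigr)^2$. The uniform-in-$\eps$ part of Lemma~\ref{lem:ule}, combined with $\kappa_+<\kappahat$, allows me to pick $T_0=T_0(\my,\kappa,\kappahat)>0$ such that $\intom\ue(t)\le\tfrac{2\kappahat|\Om|}{\my}$ for all $t\ge T_0$ and all $\eps>0$; hence the mean-square contribution never exceeds $\tfrac{4\kappahat^2|\Om|}{C_P\my^2}$. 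Together with the hypotheses $2\kappa+\eta\le\tfrac1{C_P}$ and $\eta\le 4$, the coefficients of $\intom\ue^2$ and of $\intom|\na\ve|^4$ are simultaneously at most $-\eta$, producing the full damping $-\eta\,\ye(t)$.

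It remains to convert the polynomial remainder into the stated form. Using $\big(\intom|\na\ve|^4\big)^j\le\ye^j(t)$ for $j\in\{\tfrac32,3\}$ and Young in the form $\ctilde\,y^{3/2}=\ctilde y^{3/2}\cdot 1\le\tfrac{\ctilde^2}{4\nu}y^3+\nu$, one obtains $\ctilde\bigl[y^3+y^{3/2}\bigr]\le\bigl(\ctilde+\tfrac{\ctilde^2}{4\nu}\bigr)y^3+\nu$, and the choice $\Ceins:=\max\{\ctilde,\ctilde^2\}$ (wlog $\ctilde\ge 1$) bounds this prefactor by $\Ceins\bigl(1+\tfrac1{4\nu}\bigr)$, exactly as claimed. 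The main obstacle is a tight bookkeeping of the Young-type constants: the surviving coefficient of $\intom|\na\ue|^2$ after the two Young steps has to end up at exactly $-1$, since only then does the hypothesis $2\kappa+\eta\le\tfrac1{C_P}$ yield the damping $-\eta\intom\ue^2$ without leaving any slack that would force a strictly stronger condition on $\kappa$.
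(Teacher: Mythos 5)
Your proposal is correct and follows essentially the same route as the paper's proof: adding Lemmata \ref{lem:dtuepsquadrat} and \ref{lem:dtnavzq} (with $q=2$), absorbing the cross term via Lemma \ref{lem:unaunav} and the term $4\intom|\na\ve|^3|\na\ue|$ via Young plus Lemma \ref{lem:navsechs} with $a=\tfrac18$ so that exactly $-\intom|\na\ue|^2$ survives, then Poincar\'e together with the eventual mass bound $\intom\ue\le\tfrac{2\kappahat|\Om|}{\my}$ and a final Young step with $\ny$ to reach the stated polynomial. The only differences are inessential bookkeeping choices in defining $\Ceins$.
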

\begin{proof}
 With the aid of Lemma \ref{lem:ule}, fix $T_0>0$ such that 
\begin{equation}
 \label{eq:intsmall}
 \intom \ue(\tau)<\frac{2\kappahat|\Om|}{\my}\qquad \text{for all } \tau>T_0,\;\eps>0.  
\end{equation}
 By Lemma \ref{lem:dtuepsquadrat} 
 and Lemma \ref{lem:dtnavzq} 
with $q=2$, we have
\begin{align*}
 \ye'(t)=&\delt\left(\intom \ue^2+\intom|\na\ve|^4\right)\\
\leq&-2\intom|\na \ue|^2+2\intom \ue\na\ue\cdot\na\ve+2\kappa\intom\ue^2-2\my\intom\ue^3\\
&-2\intom|\na|\na\ve|^2|^2-4\intom|\na\ve|^4 + 4\intom |\na\ve|^3|\na\ue|.
\end{align*}
By application of Lemma \ref{lem:unaunav} 
to the second and Young's inequality and Lemma \ref{lem:navsechs}
to the last term, this becomes
 \begin{align*}
 \ye'(t)\leq&-2\intom|\na\ue|^2+\frac12\intom|\na\ue|^2+2\my\intom \ue^3+\intom |\na|\na \ve|^2|^2+2C\left(\left(\intom|\na\ve|^4\right)^3+\left(\intom |\na\ve|^4\right)^{\frac32} \right)\\+&2\kappa\intom \ue^2
-2\my\intom \ue^3
-2\intom|\na|\na \ve|^2|^2-4\intom |\na \ve|^4+\frac12\intom |\na \ue|^2\\
&+8\left (\frac18\intom |\na|\na\ve|^2|^2+C(\frac18)\left(\left(\intom|\na\ve|^4\right)^3+\left(\intom |\na\ve|^4\right)^{\frac32} \right)\right)\\
 \leq&2\kappa\intom \ue^2+\Ceins\left(\intom|\na\ve|^4\right)^3+\Ceins^{\frac12}\left(\intom|\na\ve|^4\right)^{\frac32}-4\intom |\na\ve|^4-\intom|\na \ue|^2,
 \end{align*}
where we denoted $\Ceins^{\frac12}=\max{2C+8C(\frac18),1}\leq \Ceins$, $C$ being the constant from Lemma \ref{lem:unaunav} and $C(\frac18)$ taken from Lemma \ref{lem:navsechs}.

Another application of Young's inequality with $\ny>0$ -- so as to remove the unsolicited exponent $\frac32$ --
and sorting other terms, in order that the term $-\eta y$ appears, leave us with 
\[
 \ye'(t)\leq (2\kappa+\eta)\intom\ue^2+\Ceins\left(\intom|\na\ve|^4\right)^3+\ny+\frac{\Ceins}{4\ny}\left(\intom|\na\ve|^4\right)^3-\eta\intom \ue^2 - 4\intom|\na\ve|^4 -\intom |\na\ue|^2, 
\]
where we apply Lemma \ref{lem:poincare} to the last summand and use that by \eqref{eq:intsmall} $\ubar_\eps(t)=\frac{1}{|\Om|}\intom \ue(t)\leq \frac{2\kappahat}{\my}$ for $t>T_0$ to arrive at 
 \begin{align*}
 \ye'(t)\leq &  \left((2\kappa+\eta)-\frac1{C_P}\right)\intom \ue^2+\Ceins\left(1+\frac{1}{4\ny}\right)\left(\intom|\na\ve|^4\right)^3+\ny-\eta \left(\intom \ue^2+\intom |\na\ve|^4\right) + \frac{|\Om|}{C_P} \ubar_\eps^2\\
 \leq& \ny-\eta \ye(t)+ \Ceins\left(1+\frac{1}{4\ny}\right) (\ye(t))^3+\frac{4|\Om|\kappahat^2}{C_P\my^2} 
 \end{align*}
as long as $(2\kappa+\eta)C_P\leq 1$ and $\eta\in(0,4]$.
\end{proof}

The function $y_\eps$ satisfies a differential inequality with polynomial right hand side. This information is not very useful in obtaining boundedness
if not accompanied by the statement that comparison with a stationary solution to the differential equation might be possible, i.e. that there is a root of the polynomial.
Such is provided by the following lemma.

\begin{lemma}
\label{lem:nullstelle}
 For any $\my>0$ there exists $\ny_0>0$ such that for all $\ny
\in(0,\ny_0]$ there are $\kappatilde>0$, $\eta\in(0,4]$ such that the polynomial 
\[
 p(x)=\ny-\eta x+\Ceins\left(1+\frac{1}{4\ny}\right) x^3+\frac{4\kappahat^2|\Om|}{C_P \my^2}
\]
defined in Proposition \ref{prop:ODIulzvwev} 
has a positive root for $\kappahat=\kappatilde$.

Furthermore, for each $\kappahat\in[0,\kappatilde]$ it has a largest positive root $\delta_\ny(\kappahat)$ as well, satisfying 
\[
 \delta_\ny(\kappatilde)\leq\delta_\ny(\kappahat)\leq \sqrt{\frac{4}{\Ceins(1+\frac{1}{4\ny})}}.
\]
\end{lemma}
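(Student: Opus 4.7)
The plan is to exploit that $p$ is a cubic in $x$ with positive leading coefficient, positive constant term and a unique positive critical point. Writing $a_\ny := \Ceins(1+\frac{1}{4\ny})$ for brevity, a direct differentiation yields $p'(x) = -\eta + 3 a_\ny x^2$, so the only positive critical point is $x_* = \sqrt{\eta/(3 a_\ny)}$; the function $p$ strictly decreases on $[0,x_*]$, strictly increases on $[x_*,\infty)$, satisfies $p(0) > 0$ and $p(x) \to +\infty$. Hence $p$ admits a positive root (and then automatically a largest one) if and only if $p(x_*) \leq 0$, and using the relation $3 a_\ny x_*^2 = \eta$ one computes
\[
 p(x_*) = \ny + \frac{4\kappahat^2|\Om|}{C_P \my^2} - \frac{2\eta}{3}\, x_*.
\]

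First I would fix $\eta = 4$ (the largest admissible value) and examine the case $\kappahat = 0$: the condition $p(x_*) \leq 0$ then reads $\frac{2\eta}{3}\sqrt{\eta/(3 a_\ny)} \geq \ny$. As $\ny \downto 0$ the left-hand side is of order $\sqrt{\ny}$ (because $a_\ny \sim \Ceins/(4\ny)$), whereas the right-hand side equals $\ny$; hence there exists $\ny_0 > 0$, depending on $\my$ only through $\Ceins$, such that for every $\ny \in (0,\ny_0]$ the inequality holds with a strictly positive gap. Since the left-hand side of the $p(x_*) \leq 0$ condition is independent of $\kappahat$ and its right-hand side is continuous and strictly increasing in $\kappahat$, for each such $\ny$ a suitable $\kappatilde = \kappatilde(\ny,\my) > 0$ can be chosen small enough that $p(x_*) \leq 0$ is still valid at $\kappahat = \kappatilde$; this gives the first assertion.

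For the monotonicity $\delta_\ny(\kappatilde) \leq \delta_\ny(\kappahat)$ on $[0,\kappatilde]$, I would remark that decreasing $\kappahat$ only lowers the constant term of $p$, so $p$ decreases pointwise while the interval $[x_*,\infty)$ on which the largest root lies and the strict monotonicity of $p$ on it remain unchanged; hence the largest zero shifts to the right. For the upper bound I would evaluate $p$ at $x_0 := \sqrt{4/a_\ny}$, where the identity $a_\ny x_0^3 = 4 x_0$ yields
\[
 p(x_0) = \ny + (4-\eta)\, x_0 + \frac{4\kappahat^2|\Om|}{C_P \my^2} \geq 0
\]
because $\eta \leq 4$; since $x_0 \geq x_*$ (equivalently $\eta \leq 12$) and $p$ is increasing on $[x_*,\infty)$, this forces $\delta_\ny(\kappahat) \leq x_0 = \sqrt{4/(\Ceins(1+\frac{1}{4\ny}))}$, which is precisely the asserted bound.

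Nothing in the argument is genuinely hard; the only point that needs care is to ensure that the slack produced by taking $\ny$ small is not immediately eaten up by the $\kappahat$-dependent contribution, and this is handled by the continuity and strict monotonicity of the right-hand side in $\kappahat$ combined with the strict version of the $\kappahat = 0$ inequality.
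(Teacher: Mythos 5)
Your proof is correct and follows essentially the same route as the paper: both arguments locate the local minimum of the cubic at $x_m=\sqrt{\eta/(3\Ceins(1+\frac1{4\ny}))}$, compute $p(x_m)=\ny-\frac{2\eta}{3}x_m+\frac{4\kappahat^2|\Om|}{C_P\my^2}$, force this to be negative first at $\kappahat=0$ for all small $\ny$ and then at some small $\kappahat=\kappatilde>0$ by continuity, deduce $\delta_\ny(\kappatilde)\le\delta_\ny(\kappahat)$ from the monotonicity of $p$ in $\kappahat$, and obtain the upper bound from the sign of $p$ beyond $\sqrt{4/(\Ceins(1+\frac1{4\ny}))}$, using only $0<\eta\le 4$. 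The one substantive difference is your fixed choice $\eta=4$: the paper instead takes $\kappatilde\in(0,\frac1{2C_P})$ and $\eta=\min\set{4,\frac1{C_P}-2\kappatilde}$, precisely so that the pair $(\eta,\kappatilde)$ also satisfies the constraint $2\kappa+\eta\le\frac1{C_P}$ required by Proposition \ref{prop:ODIulzvwev}, which is how this lemma is fed into Propositions \ref{prop:staysmall} and \ref{prop:wirdmalklein}; with $\eta=4$ that hypothesis fails for every $\kappa\ge0$ unless $C_P\le\frac14$. This does not invalidate your proof of the lemma as literally stated, and since your argument uses nothing about $\eta$ beyond $\eta\in(0,4]$, it goes through verbatim with the paper's $C_P$-dependent choice of $\eta$ (with $\ny_0$ then depending on $\eta$ as well), which you should adopt, or at least record the constraint $2\kappatilde+\eta\le\frac1{C_P}$, to keep the lemma usable downstream.
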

\begin{proof}
 Because $p(x)$ is increasing in $\kappahat$, $\delta_\ny(\kappatilde)\leq\delta_\ny(\kappahat)$ for $\kappahat\in[0,\kappatilde]$ is obvious.

  Choose $\ny_0>0$ such that 
 \[
  \ny_0^2+\frac{\ny_0}4 <\min\setl{\frac{4}{27 \Ceins C_P^3},\frac{256}{27 \Ceins}}
 \]
 and let $\ny\in(0,\ny_0]$. Then the inequality 
 \begin{equation}
 \label{eq:nyandkappa}
  \left(\ny+\frac{4|\Om|}{\my^2 C_P}\kappahat^2\right)^2 < \frac{4\min\setl{\left(\frac1{C_P}-2\kappahat\right)^3,4^3}}{27 \Ceins(1+\frac1{4\ny})}
 \end{equation}
 is satisfied with $\kappahat=0$. Let $\kappatilde\in(0,\frac{1}{2C_P})$ be such that \eqref{eq:nyandkappa} is still satisfied for $\kappahat=\kappatilde$. This is possible due to continuity of the expressions in $\kappahat$. 
 Additionally, let $\eta=\min\set{4,\frac{1}{C_P}-2\kappatilde}$.
 
 Consequently, the inequality 
\begin{equation}
 \label{eq:pisnegative}
 \left(\ny+\frac{4|\Om|}{C_P\my^2}\kappatilde^2\right)^2< \frac{4\eta^3}{27 \Ceins(1+\frac1{4\ny})},
 \quad\text{that is}\quad\ny-\frac23\eta\sqrt{\frac{\eta}{3\Ceins(1+\frac{1}{4\ny})}}+ \frac{4|\Om|}{C_P\my^2}\kappatilde^2<0
\end{equation}
 holds.
 Observe that $p$ attains a local minimum at 
\[
 x_m=\sqrt{\frac{\eta}{3\Ceins(1+\frac{1}{4\ny})}}>0, 
\]
where 
\begin{align*}
 p(x_m)=& \ny-\eta \sqrt{\frac{\eta}{3\Ceins(1+\frac{1}{4\ny})}} + \Ceins(1+\frac{1}{4\ny})\frac{\eta}{3\Ceins(1+\frac{1}{4\ny})}\sqrt{\frac{\eta}{3\Ceins(1+\frac{1}{4\ny})}}+\frac{4|\Om|}{C_P\my^2}\kappatilde^2\\
=& \ny-\frac23\eta\sqrt{\frac{\eta}{3\Ceins(1+\frac{1}{4\ny})}}+ \frac{4|\Om|}{C_P\my^2}\kappatilde^2
\end{align*}
is negative by \eqref{eq:pisnegative} and therefore $p$ has a root in $(x_m,\infty)$.
For any $\kappahat\in[0,\kappatilde]$ this root is smaller than $\sqrt{\frac4{\Ceins(1+\frac4\ny)}}$, because for $x>\sqrt{\frac4{\Ceins(1+\frac4\ny)}}>\sqrt{\frac\eta{\Ceins(1+\frac4\ny)}}$ we have 
\[
 p(x)>\Ceins(1+\frac1{4\ny})x^3-\eta x\geq 0.\qedhere
\]
\end{proof}

We use this root for a comparison argument:
\begin{proposition}
 \label{prop:staysmall}
 Let $n=3$ and $\my>0$, let $\ny_0$, $\eta$, $\kappatilde$ and $\delta_\ny(\kappatilde)$ for some $\ny\in(0,\ny_0]$ be as in Lemma \ref{lem:nullstelle}. Then for any $0\leq\kappahat\leq\kappatilde$, $\delta_\ny(\kappahat)>\delta_\ny(\kappatilde)>0$ is such that for every $\kappa\leq\kappahat$ every $\eps>0$ has the following property: If $\ye$ from \eqref{eq:defyeps} satisfies
\[
 \ye(T)\le\delta_\ny(\kappa)
\]
for some $T>T_0$ (with $T_0=T_0(\my,\kappa,\kappahat)$ from Proposition \ref{prop:ODIulzvwev}), then $\ye(t)\le\delta_\ny(\kappa)$ for all $t>T$.
\end{proposition}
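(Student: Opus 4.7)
The plan is to feed the differential inequality of Proposition~\ref{prop:ODIulzvwev} into a standard scalar comparison argument with the constant level $\delta_\ny(\kappa)$ serving as a stationary barrier; all of the substantive analytic work has already been done in constructing the ODI and locating its positive root. Concretely, I would first invoke Proposition~\ref{prop:ODIulzvwev} to secure, for every $\eps > 0$, the ODI
\[
\ye'(t) \leq p(\ye(t)) \qquad \text{for all } t > T_0,
\]
where $p$ is the cubic from that proposition. The structural condition $2\kappa + \eta \leq \frac{1}{C_P}$ is ensured by $\kappa \leq \kappahat \leq \kappatilde$ together with the choice of $\eta$ in Lemma~\ref{lem:nullstelle}. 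A minor bookkeeping point: in order that the relevant root of $p$ be exactly the target $\delta_\ny(\kappa)$, Proposition~\ref{prop:ODIulzvwev} should be applied with its internal parameter set equal to $\kappa$ rather than $\kappahat$; the strict inequality $\kappa<\kappahat$ required in its statement is not actually used in the proof, because the choice of $T_0$ via Lemma~\ref{lem:ule} only requires $\kappa_+ < 2\kappahat$, which still holds with $\kappahat$ replaced by $\kappa>0$.

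Second, Lemma~\ref{lem:nullstelle} provides that $\delta := \delta_\ny(\kappa)$ is a positive root of $p$, so $p(\delta) = 0$, and consequently $z(t) \equiv \delta$ is a stationary solution of the ODE $z'(t) = p(z(t))$ on $[T,\infty)$. By hypothesis $z(T) = \delta \geq \ye(T)$.

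Third, since $p$ is a polynomial and hence locally Lipschitz, the standard scalar comparison principle yields $\ye(t) \leq z(t) = \delta_\ny(\kappa)$ for every $t > T$. Concretely, setting $w := \ye - z$ one has $w(T) \leq 0$; on any sub-interval where $w>0$ the estimate $w'(t) \leq p(\ye(t)) - p(z(t)) \leq L\, w(t)$ holds with $L$ a local Lipschitz constant of $p$ on a neighbourhood of $\delta$, so a Gr\"onwall argument emanating from the last zero of $w$ rules out positivity of $w$. This step carries essentially no analytic obstacle: all of the difficulty has been discharged already in Proposition~\ref{prop:ODIulzvwev} and Lemma~\ref{lem:nullstelle}, and the only real subtlety is the bookkeeping around the parameter $\kappa$ versus $\kappahat$, handled by the observation in the first paragraph.
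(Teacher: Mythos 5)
Your core argument coincides with the paper's: feed the differential inequality of Proposition \ref{prop:ODIulzvwev} into a comparison with the constant supersolution given by the positive root from Lemma \ref{lem:nullstelle}; the explicit Gr\"onwall detail you add is fine. The paper, however, does not re-run Proposition \ref{prop:ODIulzvwev} with internal parameter $\kappa$: it keeps $\kappahat$, so the ODI involves the polynomial containing $\kappahat$, and it compares with $\delta=\delta_\ny(\kappahat)$, its largest root. This is also the form of the conclusion actually invoked later (proof of Proposition \ref{prop:wirdmalklein}), and since $\delta_\ny(\cdot)$ is only defined on $[0,\kappatilde]$, the ``$\delta_\ny(\kappa)$'' in the statement is to be read as $\delta_\ny(\kappahat)$ (note that $\delta_\ny$ is nonincreasing, so $\delta_\ny(\kappa)\geq\delta_\ny(\kappahat)$ and the two versions are genuinely different).

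Your re-parameterization, designed to hit $\delta_\ny(\kappa)$ exactly, is where a gap appears relative to the stated scope. It works only for $\kappa>0$, as you yourself note, whereas the proposition quantifies over all $\kappa\le\kappahat$, including $\kappa\le 0$: for $\kappa<0$ the barrier $\delta_\ny(\kappa)$ is not defined at all, and for $\kappa=0$ your justification for dropping the strict inequality (namely $\kappa_+<2\kappa$) fails; in that case Lemma \ref{lem:ule} only yields $\intom\ue\to 0$, so the term $\frac{|\Om|}{C_P}\ubar_\eps^2$ arising in the derivation of the ODI cannot be replaced by $0$ at any finite time, and the constant $\delta_\ny(0)$ is then not a supersolution of the inequality you actually obtain. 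A second, smaller mismatch: with internal parameter $\kappa$ the ODI is guaranteed only for $t>T_0(\my,\kappa,\kappa)$, and since \eqref{eq:intsmall} now demands the smaller threshold $\frac{2\kappa|\Om|}{\my}$, this time may exceed the $T_0(\my,\kappa,\kappahat)$ fixed in the statement, so your conclusion does not cover all $T>T_0(\my,\kappa,\kappahat)$. Both problems disappear if you keep $\kappahat$ throughout and take $\delta_\ny(\kappahat)$ as the barrier, exactly as the paper does; your (correct) observation that only $\kappa_+<2\kappahat$, not strictness of $\kappa<\kappahat$, is needed is then required only for the boundary case $\kappa=\kappahat$.
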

\begin{proof}
 Choose as $\delta=\delta_\ny(\kappahat)$ the largest root of $p$ from Lemma \ref{lem:nullstelle} and observe that according to Proposition \ref{prop:ODIulzvwev} and the assumption on $T$
\[
 \ye'(t)\leq p(y(t)) \qquad\text{for all \ }t>T \qquad\text{and}\qquad \ye(T)\leq\delta.
\]
The comparison principle for ordinary differential equations therefore shows by means of comparison with $\ybar\equiv\delta$ that $\ye(t)\leq \delta$ for all $t>T$ as well.
\end{proof}

\subsection{Eventual boundedness of $y_\eps$}
Proposition \ref{prop:staysmall} asserts that $y_\eps$ stays small, should it ever fall below a certain value. 
We still have to ensure that the condition actually occurs.

\begin{proposition}
 \label{prop:wirdmalklein} Let $n=3$. Let $\ny\in(0,\ny_0]$ with $\ny_0$ as in Lemma \ref{lem:nullstelle}. Then there exists $\kappa_0\in(0,\frac18)$ 
such that for any $\kappa<\kappahat\in(0,\kappa_0]$ there is $t_0>0$ such that for all $\tau>t_0$, for all $\eps>0$
 \[\intom \ue^2(\tau)+|\na\ve(\tau)|^4<\delta_\ny(\kappahat)\]
 where $\delta_\ny(\kappahat)>0$ is the positive root of $p$ given by Lemma \ref{lem:nullstelle}.\\
 Furthermore, $\kappahat$ satisfies 
\begin{equation}\label{eq:kappadelta}
 \kappahat\leq \sqrt{\frac{\delta_\ny(\kappahat)\my^2}{(4+8C_\Om)|\Om|}}, 
\end{equation}
 where $C_\Om$ is a constant depending on the domain $\Om$ only. 
\end{proposition}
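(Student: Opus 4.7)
The plan is to exhibit, for each sufficiently small $\kappahat$, a single time $\tau>T_0$ at which $y_\eps(\tau)<\delta_\ny(\kappahat)$; Proposition~\ref{prop:staysmall} will then propagate the inequality to all later times and yield the first assertion upon setting $t_0:=\tau$. Such $\tau$ is located through an averaging argument: I will show that for $T$ large and $h$ suitably chosen the time average of $y_\eps$ over $[T,T+h]$ is bounded by $(4+8C_\Om)\kappahat^2|\Om|/\my^2$, so that at least one $\tau$ in the interval realizes $y_\eps(\tau)$ not exceeding this number. Condition \eqref{eq:kappadelta} is then exactly the requirement that this upper bound fall below $\delta_\ny(\kappahat)$.

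Bounding the $\ue^2$-contribution to the average is straightforward. Lemma~\ref{lem:ule} produces $T_1>0$ with $\intom\ue(t)\le 2\kappahat|\Om|/\my$ for $t\ge T_1$, and integration of the ODI $(\intom\ue)'\le\kappa\intom\ue-\my\intom\ue^2$ (implicit in the proof of Lemma~\ref{lem:ule}) over $[t,t+h]$ with $t\ge T_1$ yields
\[
 \my\int_t^{t+h}\intom\ue^2 \;\le\; \kappa\cdot\frac{2\kappahat|\Om|}{\my}\,h\;+\;\frac{2\kappahat|\Om|}{\my}.
\]
Choosing $h$ of order $1/\kappahat$ therefore bounds the time average $\tfrac{1}{h}\int_t^{t+h}\intom\ue^2$ by $\tfrac{4\kappahat^2|\Om|}{\my^2}$, which matches the $4$-part of the constant $4+8C_\Om$ in \eqref{eq:kappadelta}.

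Bounding the $\intom|\na\ve|^4$-contribution is the more technical step. The ODI from the proof of Lemma~\ref{lem:vderivatives}, interpreted via comparison with $y'=-y+\mathrm{const}$, furnishes the eventual pointwise bound $\intom|\na\ve(t)|^2\le C\kappahat|\Om|/\my^2$, and integration of the same ODI over $[t,t+h]$ yields $\int_t^{t+h}\intom|\Lap\ve|^2\;\lesssim\;\kappahat+\kappahat^2 h$. These estimates feed into the three-dimensional Sobolev embedding $\Wezom\hookrightarrow L^6(\Om)$, the interpolation $\|w\|_{L^4}^4\le\|w\|_{L^2}\|w\|_{L^6}^3$, and the convex-domain elliptic regularity $\|D^2\ve\|_{\Lzom}\le\|\Lap\ve\|_{\Lzom}$ valid under the Neumann condition $\delny\ve|_{\dom}=0$, producing an estimate on $\intom|\na\ve|^4$ whose time average over $[t,t+h]$, for $t,h$ as above, does not exceed $\tfrac{8C_\Om\kappahat^2|\Om|}{\my^2}$, with $C_\Om$ absorbing the Sobolev and regularity constants attached to $\Om$.

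Summing the two contributions, $\tfrac{1}{h}\int_t^{t+h} y_\eps(s)\,ds\le (4+8C_\Om)\kappahat^2|\Om|/\my^2$, and the averaging argument picks $\tau\in[t,t+h]$ with $y_\eps(\tau)$ no larger. This is strictly below $\delta_\ny(\kappahat)$ exactly under \eqref{eq:kappadelta}; because Lemma~\ref{lem:nullstelle} supplies $\delta_\ny(\kappahat)\ge\delta_\ny(\kappatilde)>0$ uniformly for $\kappahat\in[0,\kappatilde]$, a sufficiently small $\kappa_0\in(0,\tfrac{1}{8})$ ensures \eqref{eq:kappadelta} for every $\kappahat\in(0,\kappa_0]$. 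The main obstacle is the $v$-part: one must track the exponential-in-$t$ decay of the initial-data terms in the ODI of Lemma~\ref{lem:vderivatives} so that they do not spoil the $\kappahat^2$-scaling of the average, and combine the Sobolev/interpolation chain with the convex-domain $H^2$-regularity carefully enough that the constants collapse into the clean form $4+8C_\Om$ appearing in \eqref{eq:kappadelta}.
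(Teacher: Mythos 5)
Your overall architecture is the same as the paper's: find one time in a window where the average of $\ye=\intom\ue^2+\intom|\na\ve|^4$ is small, conclude that $\ye$ is small at some instant, propagate by Proposition \ref{prop:staysmall}, and choose $\kappa_0$ via the relation \eqref{eq:kappadelta} using $\delta_\ny(\kappahat)\geq\delta_\ny(\kappatilde)$. Your treatment of the $\ue^2$-part is correct (integrating \eqref{eq:firsteq_integrated} over a window of length $h\sim1/\kappahat$ after the time where $\intom\ue\leq 2\kappahat|\Om|/\my$ indeed gives an average of order $\kappahat^2|\Om|/\my^2$).

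The gap is in the $|\na\ve|^4$-part. Your chain $\norm[L^4(\Om)]{\na\ve}^4\leq\norm[\Lzom]{\na\ve}\,\norm[L^6(\Om)]{\na\ve}^3$, combined with $W^{1,2}(\Om)\embeddedinto L^6(\Om)$ and $\norm[\Lzom]{D^2\ve}\leq\norm[\Lzom]{\Lap\ve}$, yields (up to constants) $\intom|\na\ve|^4\lesssim\norm[\Lzom]{\na\ve}\norm[\Lzom]{\Lap\ve}^3+\norm[\Lzom]{\na\ve}^4$, i.e.\ a \emph{cubic} power of $\norm[\Lzom]{\Lap\ve}$. The only time-regularity you have for $\Lap\ve$ at this stage is the $L^2$-in-time bound from Lemma \ref{lem:vderivatives}, $\int_t^{t+h}\intom|\Lap\ve|^2\lesssim\kappahat^2h+\kappahat$; this cannot control $\int_t^{t+h}\norm[\Lzom]{\Lap\ve}^3$ (H\"older runs the wrong way, and no pointwise-in-time bound on $\norm[\Lzom]{\Lap\ve}$ is available), and by scaling any Gagliardo--Nirenberg estimate of $\norm[L^4]{\na\ve}^4$ in terms of $\norm[\Lzom]{\na\ve}$ and $\norm[\Lzom]{D^2\ve}$ in dimension $3$ must carry the exponent $3$ on the second-order term. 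So the asserted average bound $\frac1h\int_t^{t+h}\intom|\na\ve|^4\leq 8C_\Om\kappahat^2|\Om|/\my^2$ does not follow from the listed ingredients, and this is precisely the delicate half of the proposition. The paper instead bounds $\intom|\na\ve|^4$ by \emph{quadratic} quantities, $C_\Om\intom(\ve^2+|\Lap\ve|^2)$, so that the $L^2$-in-time estimates of Lemmata \ref{lem:vlevlz} and \ref{lem:vderivatives} apply directly to the window integral; moreover it does not insist on pure $\kappahat^2$-scaling of the average, but splits the budget as $\frac{(2+4C_\Om)|\Om|\kappahat^2}{\my^2}\leq\frac\delta2$ (this is where \eqref{eq:kappadelta} originates) plus initial-data terms that are made smaller than $\frac\delta2$ by taking the window length $T$ large. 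To repair your argument you would either need an additional a priori estimate giving $L^3$-in-time control of $\norm[\Lzom]{\Lap\ve}$, or replace the quartic interpolation step by an inequality bounding $\intom|\na\ve|^4$ through quantities whose time integrals over the window are already controlled.
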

\begin{proof}
 Due to the embedding $W^{2,2}(\Om)\embeddedinto W^{1,4}(\Om)$
, there is $C_\Om>0$ such that 
\begin{equation}
 \label{eq:embeddingw22w14}
\intom |\na w|^4\leq C_\Om\intom(w^2+|\Lap w|^2)
\end{equation}
for all $w\in W^{2,2}(\Om)$.

Let $\ny$ be as given in the statement of the proposition, let $\kappatilde>0$ be as provided by Lemma \ref{lem:nullstelle} and let $\delta=\delta_\ny(\kappatilde)$. 
Choose 
\begin{equation}
\label{eq:defkappa0}
 0<\kappa_0<\min\setl{\kappatilde,\sqrt{\frac{\delta\my^2}{(4+8C_\Om)|\Om|}},\frac18}
\end{equation}
and let $\kappahat\in(0,\kappa_0]$ and $\kappa<\kappahat$. (This already ensures \eqref{eq:kappadelta} as well as the applicability of Proposition \ref{prop:staysmall}.)

Let $T_0=T_0(\my,\kappa,\kappahat)$ be as provided by Proposition \ref{prop:ODIulzvwev} and let $t>T_0$. Note that as a result of \eqref{eq:intsmall} this entails 
\begin{equation}\label{eq:intklein_wieder}\intom \ue(t)<\frac{2\kappahat|\Om|}{\my}.\end{equation}

Furthermore denote 
\begin{equation}
 \label{eq:c0}
 C_0=\max\setl{ 1+\int|\na v_0|^2+\frac{1}{\my}\intom u_0+\frac1{\my},\frac{\kappahat+1}{\my} \max\setl{1+\intom u_0, \frac{\kappahat|\Om|}{\my}}}
\end{equation}
and choose $T>0$ so large that  
\begin{align}
\label{eq:defT}
 \frac1T\bigg(&
\frac{2\kappahat|\Om|}{\my^2}+\frac{2C_\Om\kappahat|\Om|}{\my^2}
+C_\Om\frac{\kappahat}{\my} \max\setl{1+\intom u_0, \frac{\kappahat|\Om|}{\my}} t\nonumber\\
&\;\,+C_\Om\frac1{\my}\intom u_0+\frac{C_\Om}{\my}+C_\Om\intom v_0^2+C+2CC_0 +\frac{2C_\Om\kappahat|\Om|}{\my^2}
\bigg)<\frac\delta2.
\end{align}
Combining \eqref{eq:embeddingw22w14} with Lemmata \ref{lem:ulz}, \ref{lem:vlevlz} and \ref{lem:vderivatives} gives 
\begin{align*}
 \int_t^{t+T}\intom (\ue^2+|\na\ve|^4)\leq&\int_t^{t+T}\intom\ue^2+C_\Om\int_t^{t+T}\intom\ve^2+C_\Om\int_t^{t+T}\intom |\Lap\ve|^2\\
\leq&\frac{\kappa_+}{\my} \max\setl{\intom \ue(t), \frac{\kappa_+|\Om|}{\my}} T+\frac1{\my}\intom \ue(t)\\ 
&+ C_\Om \frac{\kappa_+}{\my} \max\setl{\intom \ue(t), \frac{\kappa_+|\Om|}{\my}} T+C_\Om\frac1{\my}\intom \ue(t)+C_\Om\intom \ve^2(t)\\
&+C_\Om \frac{\kappa_+}{\my}\max\setl{\intom \ue(t), \frac{\kappa_+|\Om|}{\my}}T+C_\Om\intom |\na \ve(t)|^2 +\frac{C_\Om}{\my} \intom u(t).
\end{align*}
Due to \eqref{eq:intklein_wieder}, upon another application of Lemmata \ref{lem:vlevlz} and \ref{lem:vderivatives} and taking into account 
that $\kappa_+\leq\kappahat$, this reduces to 
\begin{align*}
 \int_t^{t+T}\intom (\ue^2+|\na\ve|^4)
\leq&\frac\kappahat{\my} \frac{2\kappahat|\Om|}{\my} T+\frac{2\kappahat|\Om|}{\my^2}\\ 
&+ C_\Om \frac{\kappahat}{\my} \frac{2\kappahat|\Om|}{\my} T+\frac{2C_\Om\kappahat|\Om|}{\my^2}+C_\Om\frac{\kappahat}{\my} \max\setl{1+\intom u_0, \frac{\kappahat|\Om|}{\my}} t\\&+C_\Om\frac1{\my}\left(1+\intom u_0\right)+C_\Om+C_\Om\intom v_0^2
+C_\Om \frac{\kappahat}{\my}\frac{2\kappahat|\Om|}{\my}T+2C_\Om C_0 +\frac{2C_\Om\kappahat|\Om|}{\my^2},
\end{align*}
where $C_0$ is as defined in \eqref{eq:c0}. Therefore, 
\begin{align*}
 \frac1T\int_t^{t+T}\intom (\ue^2+|\na\ve|^4)\leq &
\frac{(2+4C_\Om)|\Om|}{\my^2}\kappahat^2\\
+&\frac1T\bigg(
\frac{2\kappahat|\Om|}{\my^2}+\frac{2C_\Om\kappahat|\Om|}{\my^2}
+C_\Om\frac{\kappahat}{\my} \max\setl{1+\intom u_0, \frac{\kappahat|\Om|}{\my}} t\\
&\;\,+C_\Om\frac1{\my}\intom u_0+\frac{C_\Om}{\my}+C_\Om\intom v_0^2+C_\Om+2C_\Om C_0 +\frac{2C_\Om\kappahat|\Om|}{\my^2}
\bigg).
\end{align*}
Our choice of $\kappa_0$ and $T$ in \eqref{eq:defkappa0} and \eqref{eq:defT}, respectively, now entails 
\[
 \frac1T\int_t^{t+T}\intom (\ue^2+|\na\ve|^4)\leq \delta(\kappatilde). 
\]
Accordingly, for at least one $t_0\in(t,t+T)$ 
\[
 \intom (\ue^2(t_0)+|\na\ve(t_0)|^4)\leq \delta(\kappatilde)
\]
holds as well. Due to $\delta_\ny(\kappatilde)\leq \delta_\ny(\kappa_0)\leq\delta_\ny(\kappahat)$ for $0<\kappahat<\kappa_0$, the claimed inequality for larger times $\tau$ is a direct consequence of Proposition \ref{prop:staysmall}.
\end{proof}

\subsection{Eventual boundedness of $(u,v)$ in $\Liom\times W^{1,\infty}(\Om)$}
The next step is to refine these bounds on $y_\eps$ to bounds on $u$, $v$ and $\na v$. $L^p-L^q$ estimates for the heat semigroup will be the cornerstone of this procedure.

\begin{proposition}
 \label{prop:uepsfinallybounded} Let $n=3$. Then there exists a function $K\col [0,\infty)\to[0,\infty)$ satisfying $\lim_{\delta\to 0} K(\delta)=0$ with the following properties: 

Assume, $\ny\in(0,\ny_0)$, $\kappa<\kappa_0$ with $\ny_0$, $\kappa_0$ as in Lemma \ref{lem:nullstelle} and Proposition \ref{prop:wirdmalklein} respectively. Choose $\kappahat\in(\kappa_+,\kappa_0]$ and let $\delta_\ny(\kappahat)$ be as given by Proposition \ref{prop:wirdmalklein}. 
 Then there are $T_*>0$ and $C>0$ such that for all $t>T_*$ and for all $\eps>0$
\[
 \norm[\Liom]{\ue(t)}+\norm[\Liom]{\na\ve(t)}<K(\delta_\ny(\kappahat)).
\]
Furthermore, corresponding to any $\norm[\Lzom]{u_0}$, $\norm[\Lzom]{v_0}$, there is $C>0$ such that for all $t>T_*$, $\eps>0$ 
\[
 \norm[\Liom]{\ve(t)}\leq Ce^{-(t-T_*)}+K(\delta_\ny(\kappahat)).
\]
\end{proposition}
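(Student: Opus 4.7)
The plan is to combine Propositions \ref{prop:wirdmalklein} and \ref{prop:staysmall} to fix $T_*$ such that
\[
\ye(t)=\intom \ue^2(t)+\intom|\na\ve(t)|^4 \leq \delta := \delta_\ny(\kappahat) \qquad \text{for all } t>T_*,\; \eps>0,
\]
and then bootstrap the resulting bounds $\|\ue(t)\|_{\Lzom}\leq\sqrt\delta$ and $\|\na\ve(t)\|_{L^4(\Om)}\leq\delta^{1/4}$ up to $\Liom$-bounds on $\ue$ and $\na\ve$ by means of Neumann-heat-semigroup estimates in the spirit of Proposition \ref{prop:epsbounded}.

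The bootstrap alternates two mild-formula steps, both based on writing the equations with $\Lap$ replaced by $\Lap-1$ so as to pick up an exponential factor $e^{-\tau}$ in the semigroup smoothing. Rewriting the first equation of \eqref{eq:epsprob} as $\uet=(\Lap-1)\ue+[(1+\kappa)\ue-\na\cdot(\ue\na\ve)-\my\ue^2-\eps\ue^\zv]$, positivity of the Neumann semigroup together with the negativity of the super-quadratic sink terms yields
\[
\ue(t) \leq e^{(t-T_*)(\Lap-1)}\ue(T_*) + (1+\kappa_+)\int_{T_*}^t e^{(t-s)(\Lap-1)}\ue(s)\,\ds - \int_{T_*}^t e^{(t-s)(\Lap-1)}\na\cdot(\ue\na\ve)(s)\,\ds.
\]
Together with a smoothing estimate of the form $\|e^{\tau(\Lap-1)}\na\cdot w\|_{L^{p'}(\Om)}\leq C(1+\tau^{-\frac12-\frac n2(\frac1r-\frac1{p'})})e^{-\tau}\|w\|_{L^r(\Om)}$ in the flavour of \eqref{eq:prop1defc3} and H\"older's inequality $\|\ue\na\ve\|_{L^r(\Om)}\leq\|\ue\|_{\Lpom}\|\na\ve\|_{L^q(\Om)}$ with $\frac1r=\frac1p+\frac1q$, this turns an $L^\infty_t \Lpom$-bound on $\ue$ into an $L^\infty_t L^{p'}(\Om)$-bound with $p'>p$. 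An analogous step starting from $\na\ve(t)=\na e^{(t-T_*)(\Lap-1)}\ve(T_*)+\int_{T_*}^t \na e^{(t-s)(\Lap-1)}\ue(s)\,\ds$ and using an \eqref{eq:prop1defc2}-type smoothing estimate converts $\ue\in L^\infty_t L^{p'}(\Om)$ into $\na\ve\in L^\infty_t L^{q'}(\Om)$ with $q'>q$. Starting from $(p,q)=(2,4)$, finitely many alternations drive $p$ above $n+2$, after which one final step on each variable produces the desired $\Liom$-bounds.

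For the separate $\Liom$-bound on $\ve$ with its exponentially decaying prefactor I split
\[
\ve(t)=e^{(t-T_*)(\Lap-1)}\ve(T_*)+\int_{T_*}^t e^{(t-s)(\Lap-1)}\ue(s)\,\ds :
\]
the first summand decays in $\Liom$ like $Ce^{-(t-T_*)}$ by $\Lzom$-to-$\Liom$ smoothing of $e^{\tau(\Lap-1)}$ applied to $\ve(T_*)$, whose $\Lzom$-norm is bounded uniformly in $\eps$ by Lemma \ref{lem:vlevlz}; the second is dominated by $\|\ue\|_{L^\infty_t\Liom}\int_0^\infty e^{-\tau}\,\ds$ and hence by a constant multiple of $K(\delta)$.

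The main obstacle I foresee is verifying that the function $K$ produced by the final $\Liom$-bounds genuinely satisfies $K(\delta)\to 0$ as $\delta\to 0$, which forces one to track the $\delta$-dependence through the bootstrap. Because each step is linear in the input norms — the super-quadratic terms $-\my\ue^2-\eps\ue^\zv$ are discarded by positivity, the $\kappa$-contribution carries the small coefficient $\kappa<\kappa_0$, and the drift $\ue\na\ve$ is estimated multiplicatively in $\ue$ and $\na\ve$ via H\"older — the $L^p$-bound obtained at each level retains the form $C_p\delta^{\alpha_p}$ with some strictly positive exponent $\alpha_p$, while the transient initial-data contributions $e^{(t-T_*)(\Lap-1)}\ue(T_*)$ and $\na e^{(t-T_*)(\Lap-1)}\ve(T_*)$ decay in $(t-T_*)$ and become negligible after enlarging $T_*$. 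The intermediate exponents $p,q,r,p'$ at each step must be chosen so that the singularity $\tau^{-\alpha}$ arising in the semigroup smoothing is integrable against $e^{-\tau}$ (i.e.\ $\alpha<1$) and so that the $L^p$-sequence strictly increases; in dimension $n=3$, starting from $L^2$, this is straightforward to arrange.
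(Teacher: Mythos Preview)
Your proposal is correct in outline and would yield the proposition, but it takes a genuinely different route from the paper.

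The paper does not iterate. Instead, it fixes $t_0\ge T_*-2$, introduces the time-weighted quantity
\[
M:=\sup_{\tau\in(t_0,t_0+2]}\bigl\|(\tau-t_0)^{3/4}\ue(\tau)\bigr\|_{\Liom},
\]
and closes in a single step: picking some $p\in(3,4)$, it interpolates
\[
\|\ue\na\ve\|_{\Lpom}\le\|\na\ve\|_{L^4}\,\|\ue\|_{L^{4p/(4-p)}}\le\delta^{1/4}\,\|\ue\|_{\Lzom}^{(4-p)/(2p)}\|\ue\|_{\Liom}^{1-(4-p)/(2p)},
\]
inserts this into the Duhamel formula on $(t_0,t_0+2]$ (with the $\kappa_+\ue$-term absorbed via $\kappa_+<\kappa_0<\tfrac18$), and obtains the self-referential inequality
\[
M\le C_3\sqrt{\delta}+C_4\,\delta^{1/p}\,M^{\,1-\frac{4-p}{2p}}.
\]
Since the exponent on $M$ is strictly below $1$, this forces $M\le D(\delta)$ with $D(\delta)\to 0$ as $\delta\to 0$, giving the $\Liom$-bound on $\ue$ in one stroke. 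The bounds on $\na\ve$ and $\ve$ then follow by single semigroup estimates, the latter using a splitting into mean and oscillation of $\ve(t_*)$ together with \eqref{eq:kappadelta} to control the mean by $C\delta^{1/2}$.

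Your alternating $L^p$-bootstrap with the $(\Lap-1)$-shift is a perfectly legitimate alternative: the exponential factor $e^{-\tau}$ replaces the paper's finite-interval device, and the $\delta$-dependence propagates because every term in your recursion carries a positive power of $\delta$ (the bilinear drift term is even quadratically small). The trade-off is that the paper's one-shot closure is considerably shorter and makes the form of $K(\delta)$ explicit, whereas your iteration is more mechanical, requires shifting $T_*$ forward by a fixed amount at each of finitely many steps to absorb the short-time singularity of the initial-data contribution, and needs the check (which you correctly flag) that in $n=3$ the admissible jumps $p\mapsto p'$ strictly increase $p$ at every stage starting from $(p,q)=(2,4)$.
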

\begin{proof}
Let $\ny_0, \ny, \kappa\leq\kappa_+<\kappahat<\kappa_0$ and $\delta:=\delta_\ny(\kappahat)$ be as indicated in the statement of the proposition.
Let $T_*-2$ be the number from Proposition \ref{prop:wirdmalklein}, let $t_0\geq T_*-2$ and let us first show boundedness of $\ue$ on $[t_0+1,t_0+2]$. 
Define 
\[
 M:=\sup_{\tau\in(t_0,t_0+2]} \norm[\Liom]{(\tau-t_0)^{\frac34} \ue(\tau)}.
\]

Let $p\in(3,4)$.
By the choice of $T_*$ and $\delta$ and Proposition \ref{prop:wirdmalklein}, $\intom u^2(t)+\intom  |\na \ve(t)|^4\leq \delta$ for $t>T_*$. Together with H\"older's inequality this implies 
\begin{align}
\label{eq:integrand}
 \norm[\Lpom]{\ue(s)\na \ve(s)}\leq&
\norm[L^{\frac{4p}{4-p}}(\Om)]{\ue(s)}\norm[L^4(\Om)]{\na\ve(s)}\leq\delta^{\frac14} \left(\intom \ue^{\frac{4p}{4-p}}(s)\right)^{\frac{4-p}{4p}}\ \notag\\
\leq&\delta^{\frac14}\left(\intom \ue^2(s)\cdot \ue^{\frac{4p}{4-p}-2}(s)\right)^{\frac{4-p}{4p}}\notag
\leq\delta^{\frac14}\left(\intom \ue^2(s)\right)^{\frac{4-p}{4p}}\left(\sup_{\Om} \ue^{\frac{4p}{4-p}-2}(s)\right)^{\frac{4-p}{4p}}\notag\\
\leq&\delta^{\frac14+\frac{4-p}{4p}} \sup_\Om(\ue^{1-\frac{4-p}{2p}}(s)) 
\leq\delta^{\frac{1}{p}} (s-t_0)^{-\frac34+\frac34\frac{4-p}{2p}} \sup_\Om((s-t_0)^{\frac34}\ue(s))^{1-\frac{4-p}{2p}}
\end{align}
for $s\in(t_0,t_0+2]$. 
Triangle inequality and $L^p-L^q-$estimates \cite[Lemma 1.3]{winkler_10_aggregationvs} give a constant $C_1>0$ such that 
\newcommand{\uebar}{\ubar_\eps}

\begin{equation*}
 \norm[\Liom]{e^{\tau\Lap}\ue(t_0)}\leq C_1 (1+\tau^{-\frac34})\norm[\Lzom]{\ue(t_0)-\uebar(t_0)}+\norm[\Liom]{\uebar(t_0)},
\end{equation*}
where $\norm[\Liom]{\ubar_\eps(t_0)}=\frac1{|\Om|}\intom \ue(t_0)\leq |\Om|^{-\frac12}\norm[\Lzom]{\ue(t_0)}\leq |\Om|^{-\frac12}\sqrt{\delta}$ and thus 
$\norm[\Lzom]{\uebar(t_0)}\leq\sqrt{\delta}$ lead to 
\begin{equation}
\label{eq:uliom} 
 \norm[\Liom]{e^{\tau\Lap}\ue(t_0)}\leq C_1 (1+\tau^{-\frac34})2\sqrt{\delta}+|\Om|^{-\frac12}\sqrt{\delta}. 
\end{equation}

Again, by semigroup representation and the fact that the heat semigroup is order-preserving, 
\begin{align*}
 0\leq\tau^{\frac34} \ue(t_0+\tau)\leq&\tau^{\frac34}e^{\tau\Lap}\ue(t_0) - \tau^{\frac34}\intntau e^{(\tau-s)\Lap}\na\cdot(\ue(t_0+s)\na \ve(t_0+s))\ds \\&+\tau^{\frac34}\intntau e^{(\tau-s)\Lap}\left(\kappa_+ \ue(t_0+s)-\my \ue^2(t_0+s)\right)\ds \\
\leq& \tau^{\frac34}\norm[\Liom]{e^{(\tau-s)\Lap}\ue(t_0)}  + \tau^{\frac34}\intntau \norm[\Liom]{e^{(\tau-s)\Lap}\na\cdot(\ue(t_0+s)\na \ve(t_0+s))}\ds\\ &+ \tau^{\frac34}\intntau \kappa_+ M s^{-\frac34} \ds.
\end{align*}

Together with $L^p-L^q-$estimates, \eqref{eq:integrand} and \eqref{eq:uliom} this entails for $\tau\in[0,2]$ and some $C_2>0$ from \cite[Lemma 1.3]{winkler_10_aggregationvs}

 \begin{align*}
  \norm[\Liom]{\tau^{\frac34} \ue(t_0+\tau)}\leq& \tau^{\frac34}C_1 (1+\tau^{-\frac34})2\sqrt{\delta}+\tau^{\frac34}|\Om|^{-\frac12}\sqrt{\delta}+8\kappa_+ M\\
&+]2\int_{0}^{\tau}(1+(\tau-s)^{-\frac12-\frac3{2p}})\norm[\Lpom]{\ue(t_0+s)\na\ve(t_0+s)} ds\\
 \leq& 2C_1 (1+\tau^{\frac34})\sqrt{\delta}+\tau^{\frac34}|\Om|^{-\frac12}\sqrt{\delta}+8\kappa_0 M\\
 &+C_2\int_0^\tau(1+(\tau-s)^{-\frac12-\frac3{2p}}) \delta^{\frac{1}{p}}s^{-\frac34+\frac34\frac{4-p}{2p}}\norm[\Liom]{s^{\frac34}\ue(t_0+s)}^{1-\frac{4-p}{2p}} ds \\
 \leq&2C_1 (1+\tau^{\frac34})\sqrt{\delta}+\tau^{\frac34}|\Om|^{-\frac12}\sqrt{\delta}+8\kappa_0 M\\
 &+C_2\int_0^\tau(1+(\tau-s)^{-\frac12-\frac3{2p}}) \delta^{\frac{1}{p}}s^{-\frac34+\frac34\frac{4-p}{2p}}M^{1-\frac{4-p}{2p}} ds,  
 \end{align*}

%
As
\(
 \int_0^2(1+(\tau-s)^{-\frac12-\frac3{2p}})s^{-\frac34+\frac34\frac{4-p}{2p}}\ds 
\)
is finite and $\frac{1}{1-8\kappa_0}>0$, 
taking the supremum over $\tau\in[0,2]$, we infer
\[
 M\leq C_3 \sqrt{\delta} + C_4 \delta^{\frac1p} M^{1-\frac{4-p}{2p}}
\]
with obvious choices of the constants $C_3,C_4>0$.

%
Therefore 
\[
 M\leq D(\delta):=\sup\set{\xi\in[0,\infty):\xi-C_4\delta^{\frac1p} \xi^{1-\frac{4-p}{2p}}\leq C_3\sqrt{\delta}}<\infty.
\]
Note that $D(\delta)$ tends to $0$ as $\delta$ becomes small.

For $t\in[t_0,t_0+2]$ 
\[
 (t-t_0)^{\frac34}\norm[\Liom]{\ue(t)}<D(\delta),
\]

meaning that for $t\in[t_0+1,t_0+2]$
\[
 \norm[\Liom]{\ue(t)}<D(t-t_0)^{-\frac34}\leq D(\delta).
\]
$D(\delta)$ is independent of the choice of $t_0>T_*-2$, therefore we can conclude 
\begin{equation}
\label{eq:boundu}
 \norm[\Liom]{\ue(t)}\leq D(\delta) 
\end{equation}
for any $t>T_*-1$.

Boundedness of $\set{\na v(\tau)}_{\tau >T_*}$ in $\Liom$ can be achieved from the following estimates:
Let $t_0=T_*-1$ and denote $t=\tau-t_0$. Then \cite[Lemma 1.3]{winkler_10_aggregationvs} provides $C_5>0$ such that
\begin{align}\label{eq:boundnav}
 \norm[\Liom]{\na \ve(\tau)}\leq&\norm[\Liom]{\na e^{t(\Lap-1)}\ve(t_0)}+\intnt \norm[\Liom]{\na e^{(t-s)(\Lap-1)}\ue(t_0+s)}\ds\nonumber\\
\leq& C_5 t^{-\frac78}\norm[L^4(\Om)]{\na \ve(t_0)}+ C_5 \intnt (1+(t-s)^{-\frac12}) e^{-(t-s)} \norm[\Liom]{\ue(t_0+s)}\ds\nonumber\\
\leq& C_5 \delta^{\frac14} t^{-\frac78} + D(\delta) C_5 \intninf (1+\sigma^{-\frac12})e^{-\sigma} d\sigma
\end{align}
 is bounded on $[T_*,\infty)$. By similar reasoning together with Lemma \ref{lem:vlevlz}, we obtain bounds on $\norm[\Liom]{\ve(\tau)}$. In preparation for these estimates, let $t_*>t_0$ and let us note that by \eqref{eq:intvdecay} and Lemma \ref{lem:vlevlz} 
\begin{align*}
  \frac1{|\Om|}\intom \ve(t_*)\leq& \frac1{|\Om|}\left(\intom \ve(t_0)\right) e^{-(t_*-t_0)}+\frac{\kappa_+}{\my}+\frac1{|\Om|}\intom \ue(t_0)\\
\leq& (\frac{\kappa_+}\my+ C_6)e^{-(t_*-t_0)}+\frac{\kappa_+}\my+\frac1{|\Om|^{\frac12}}\norm[\Lzom]{\ue(t_0)}\\
\leq&C_6 e^{-(t_*-t_0)}+2\frac{\kappahat}{\my}+\frac{\delta^{\frac12}}{|\Om|^{\frac12}}\\
\leq&C_6 e^{-(t_*-t_0)}+C_7 \delta^{\frac12}, 
\end{align*}
where $C_6$ depends on $\norm[L^1(\Om)]{u_0}$ and $\norm[L^1(\Om)]{v_0}$ (and $|\Om|$) only, and where we have applied \eqref{eq:kappadelta} in the last step, so that $C_7=(1+\frac1{\sqrt{(4+8C_\Om)}})\frac1{\sqrt{|\Om|}}$ with $C_\Om$ as in \eqref{eq:kappadelta}.

Lemma 1.3 of \cite{winkler_10_aggregationvs} yields $C_8>0$, which, in conjunction with Poincar\'e's inequality and \eqref{eq:boundu}, gives 
\begin{align*}
 \norm[\Liom]{\ve(t_*+t)}\leq& \norm[\Liom]{e^{t(\Lap-1)}\ve(t_*)}+\intnt\norm[\Liom]{e^{(t-s)(\Lap-1)}\ue(t_*+s)}\ds\\
\leq& \norm[\Liom]{e^{t(\Lap-1)}(\ve(t_*)-\frac{1}{|\Om|}\intom \ve(t_*))} + \frac{1}{|\Om|}\intom \ve(t_*) + tD(\delta)\\
 \leq& C_8(1+t^{-\frac34})\norm[\Lzom]{\ve(t_*)-\frac{1}{|\Om|}\intom \ve(t_*)}+\frac{1}{|\Om|}\intom \ve(t_*) + 2D(\delta)\\
\leq& C_8(1+t^{-\frac34})C_P\norm[\Lzom]{\na \ve(t_*)}+\frac{1}{|\Om|}\intom \ve(t_*) + 2D(\delta)\\
\leq& C_8(1+t^{-\frac34})C_P|\Om|^{\frac14}\delta^{\frac14}+C_6e^{-(t_*-t_0)}e^{2-t}+C_7 \delta^{\frac12} + 2D(\delta)
\end{align*}
for any $t\in(0,2]$ and therefore 
\begin{equation}\label{eq:boundv}
 \norm[\Liom]{\ve(\tau)}\leq 2C_8 C_P|\Om|^{\frac14}\delta^{\frac14}+C_7 \delta^{\frac12} + 2D(\delta)+C_6ee^{-(\tau-(t_0+1))}
\end{equation}
for any $\tau>t_0+1=T_*$.
Collecting terms from \eqref{eq:boundu}, \eqref{eq:boundnav} and \eqref{eq:boundv}, we obtain a suitable definition of $C$ and of $K(\delta)$ -- and as $\delta^{\frac14}$, $\delta^{\frac12}$ and $D(\delta)$ tend to $0$ as $\delta\downto 0$, indeed, $\lim_{\delta\downto 0} K(\delta)=0$.
\end{proof}

\section{Definition of solutions}
\begin{definition}
\label{def:soln}
A pair of functions 
$(u,v)\in L^2_{loc}((0,\infty); \Lzom)\times L^2_{loc}((0,\infty); \Wezom)$ is called {\bf weak solution} of \eqref{eq:limprob} for initial data $(u_0,v_0)\in \Lzom\times \Wezom$ if for all test functions $\phii\in \Cninf(\Ombar\times[0,\infty))$ the following holds: 
\begin{equation}
\label{eq:solndefu}
 - \intninf\intom u \phii_t -\intom u_0\phii(0) = \intninf\intom u\Lap \phii-\intninf\intom u\na v\cdot\na \phii+\kappa\intninf\intom u\phii-\my\intninf\intom u^2\phii 
\end{equation}
 and, for all $\phii\in \Cninf(\Ombar\times[0,\infty))$,
 \begin{equation}
 \label{eq:solndefv}
  -\intninf\intom v\phii_t-\intom v_0\phii(0) = -\intninf\intom \na v\cdot\na\phii-\intninf\intom v\phii+\intninf\intom u\phii.
 \end{equation}
\end{definition}
%

\section{Convergence to a solution}
\label{sec:soln}
Purpose of the estimates from section \ref{sec:estimates} was to make the extraction of convergent sequences of approximate solutions $(\ue,\ve)$ possible. The following proposition lists, in which sense we have obtained convergence.
\begin{prop}
 \label{prop:conv}
 There exist $u\in L^2_{loc}((0,\infty),\Lzom)$ and $v\in L^2_{loc}((0,T),\Wezom)$ and a sequence $\eps_j\downto 0$ such that for any $T>0$
\begin{align}
 \label{conv:uae}\uej\to u&\quad\text{a.e. in }\Om\times[0,T],\\
 \label{conv:ulz}\uej\to u&\quad\text{ in } L^2(\Om\times(0,T)),\\
 \label{conv:uzv}\eps_j\uej^\zv\weakto 0&\quad\text{ in }L^1(\Om\times(0,T)),\\
 \label{conv:vlzw}\vej\weakto v&\quad\text{ in } L^2((0,T);\Wezom),\\
 \label{conv:vlz}\vej\to v&\quad\text{ in } L^2(\Om\times(0,T)),\\
 \label{conv:vae}\vej\to v&\quad \text{ a.e. in } \Om\times[0,T],\\
 \label{conv:Lapv}\Lap \vej \weakto \Lap v &\quad\text{ in } L^2(\Om\times(0,T)),\\
 \label{conv:vtwlz} v_{\eps_jt}\weakto v_t&\quad\text{ in }L^2(\Om\times(0,T)),\\
 \label{conv:unav}\uej\na\vej\weakto u\na v&\quad\text{ in }L^1(\Om\times(0,T)).
\end{align}
\end{prop}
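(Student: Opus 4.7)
The convergences to be shown split into two compactness steps (one for $\ue$, one for $\ve$) followed by a strong--weak pairing for the flux. The abstract tool is the Aubin--Lions--Simon lemma, complemented by Vitali's theorem for turning almost-everywhere into strong convergence via the equi-integrability of Lemma \ref{lem:equi}.

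\textbf{Compactness for $\ue$.} Lemma \ref{lem:uinlewevd} controls $\ue$ in $L^{4/3}((0,T);W^{1,4/3}(\Om))$ and Lemma \ref{lem:uetbd} controls $\uet$ in $L^1((0,T);(W^{2,\infty}(\Om))^*)$. Along the chain $W^{1,4/3}(\Om)\kpteingebettetin L^1(\Om)\eingebettetin (W^{2,\infty}(\Om))^*$ (Rellich--Kondrachov for the compact embedding, duality for the continuous one), the Aubin--Lions--Simon lemma provides relative compactness of $\{\ue\}$ in $L^{4/3}((0,T);L^1(\Om))$. A standard diagonal argument in $T=1,2,\ldots$ yields a subsequence $\eps_j\downto 0$ along which $\uej\to u$ almost everywhere in $\Om\times(0,\infty)$, i.e.~\eqref{conv:uae}. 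Since $u$ is finite a.e., the sequence $\uej(x,t)$ is eventually bounded at a.e.~$(x,t)$, so $\eps_j\uej^\zv(x,t)\to 0$ a.e.~and $\uej^2(x,t)\to u^2(x,t)$ a.e. Equi-integrability of $\{\ue^2\}$ and $\{\eps\ue^\zv\}$ from Lemma \ref{lem:equi} together with Vitali's theorem then delivers \eqref{conv:ulz} and strong (hence weak) $L^1$-convergence $\eps_j\uej^\zv\to 0$, i.e.~\eqref{conv:uzv}.

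\textbf{Compactness for $\ve$.} Lemma \ref{lem:vderivatives} bounds $\ve$ in $L^\infty((0,T);\Wezom)$ and $\Lap\ve$ in $L^2(\Om\times(0,T))$; by elliptic regularity for the Neumann Laplacian, $\ve$ is uniformly bounded in $L^2((0,T);W^{2,2}(\Om))$. The second equation of \eqref{eq:epsprob} identifies $\vet=\Lap\ve-\ve+\ue$ as uniformly bounded in $L^2(\Om\times(0,T))$ thanks to the above together with Lemmata \ref{lem:vlevlz} and \ref{lem:ulz}. Extracting a further subsequence along $\{\eps_j\}$, reflexivity gives the weak convergences \eqref{conv:vlzw}, \eqref{conv:Lapv} and \eqref{conv:vtwlz}. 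A second Aubin--Lions--Simon application with $\Wezom\kpteingebettetin\Lzom$ and $\vet$ bounded in $L^2((0,T);\Lzom)$ yields the strong convergence \eqref{conv:vlz}, from which yet another extraction furnishes the pointwise limit \eqref{conv:vae}; uniqueness of limits identifies this a.e.~limit with the weak $L^2$-limit appearing in \eqref{conv:vlzw}.

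\textbf{Flux and main obstacle.} By \eqref{conv:ulz}, $\uej\to u$ strongly in $L^2(\Om\times(0,T))$, while \eqref{conv:vlzw} together with the $L^\infty((0,T);\Lzom)$-bound on $\na\ve$ yields $\na\vej\weakto\na v$ weakly in $L^2(\Om\times(0,T))$; the standard fact that the product of a strongly and a weakly $L^2$-convergent sequence converges weakly in $L^1$ then yields \eqref{conv:unav}. The delicate point is the first Aubin--Lions step: the bound on $\uet$ is extremely weak, living only in $L^1$ in time and in $(W^{2,\infty}(\Om))^*$ in space. This forces the choice of $L^1(\Om)$ as the target of the compact embedding, which is only just large enough to embed continuously into $(W^{2,\infty}(\Om))^*$ and still be the target of a compact embedding from $W^{1,4/3}(\Om)$. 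Once a.e.~convergence of $\uej$ has been secured, the equi-integrability of Lemma \ref{lem:equi} -- tailored precisely to handle the quadratic absorption term -- does the work of upgrading the mode of convergence wherever it is needed.
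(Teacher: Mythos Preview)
Your proof is correct and follows essentially the same route as the paper's: Aubin--Lions compactness for $\ue$ (using the same $L^{4/3}((0,T);W^{1,4/3}(\Om))$ and $L^1((0,T);(W^{2,\infty}(\Om))^*)$ bounds), then the equi-integrability of Lemma~\ref{lem:equi} to upgrade to strong $L^2$, followed by Aubin--Lions for $\ve$ via the $L^2$-bound on $\vet=\Lap\ve-\ve+\ue$, and finally the strong--weak $L^2$ pairing for the flux. The only visible difference is in how \eqref{conv:ulz} is obtained: you apply Vitali's theorem directly, whereas the paper uses Dunford--Pettis to get $\uej^2\weakto u^2$ in $L^1$, infers convergence of the $L^2$-norms, and combines this with weak $L^2$-convergence of $\uej$ to deduce strong convergence---both arguments are standard and interchangeable here.
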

\begin{proof}
 Lemmata \ref{lem:uinlewevd} and \ref{lem:uetbd} show boundedness of $\set{\ue}_{\eps}$ in $L^{\frac43}((0,T);W^{1,\frac43}(\Om))$ and of the derivatives $\set{\uet}_\eps$ in $L^1((0,T); (W^{2,\infty}(\Om))^*)$ so that by a variant of the Aubin-Lions-Lemma \cite[Prop. 6]{chen_juengel_liu_14}, 
 $\set{\ue}_\eps$ is relatively compact in $L^{\frac43}(\Om\times(0,T))$; in particular, there is a sequence $\eps_j\downto0$ (of which we will, without relabeling, choose further subsequences in the following) such that $\uej\to u$ almost everywhere in $\Om\times(0,T)$ for some $u\in L^{\frac43}(\Om\times(0,T))$.
 Boundedness of $\set{\ue}_\eps$ in $L^2(\Om\times(0,T))$ due to Lemma \ref{lem:ulz} 
 yields a subsequence along which $\uej\weakto u$ in $L^2(\Om\times(0,T))$.

 By Lemma \ref{lem:equi}, $\set{\ue^2}_\eps$ is equi-integrable and thus, according to \cite[Thm. IV.8.9]{dunford_schwartz_I_58}, weakly sequentially precompact in $L^1(\Om\times(0,T))$. Along a subsequence, $\uej^2\weakto u^2$ in $L^1(\ntom)$ 
and hence 
\[
 \norm[L^2(\ntom)]{\uej}^2=\int_\ntom\uej^2\cdot1\to \int_\ntom u^2\cdot1=\norm[L^2(\ntom)]{u}^2.
\]
The combination of $\uej\weakto u$ in $L^2(\ntom)$ and $\norm[L^2(\ntom)]{\uej}\to\norm[L^2(\ntom)]{u}$ shows that actually \eqref{conv:ulz} holds. 

Similarly, we see that $\set{\eps\ue^\zv}_\eps$ is equi-integrable (Lemma \ref{lem:equi}) and hence is 
weakly convergent along a subsequence. Pointwise a.e. convergence of $\ue^\zv$ to $u^\zv$ identifies the weak limit of $\eps_j\uej^\zv$ as $0$, which is \eqref{conv:uzv}.

According to Lemmata \ref{lem:vlevlz} and \ref{lem:vderivatives}, 
$\set{\ve}_\eps$ is bounded in $L^\infty((0,T);\Wezom)\embeddedinto L^2((0,T);\Wezom)$ and a subsequence with \eqref{conv:vlzw} can be found.

Furthermore, $\set{\vet}_\eps=\set{\Lap\ve-\ve+\ue}_\eps$ is bounded in $L^2((0,T);\Lzom)$ due to Lemmata \ref{lem:vderivatives}, \ref{lem:vlevlz}, \ref{lem:ulz}, and the Aubin-Lions lemma yields \eqref{conv:vlz} as well as, along another subsequence, \eqref{conv:vae}. At the same time, we can conclude \eqref{conv:Lapv} and \eqref{conv:vtwlz}.

The statement \eqref{conv:unav}, finally, results from a combination of \eqref{conv:ulz} and \eqref{conv:vlzw}.
\end{proof}

From now on, by $(u,v)$ we will denote the limit provided by Proposition \ref{prop:conv}. 
Of course, it would be desirable for $(u,v)$ to be a solution to the original problem. 
That is the case.

\begin{lemma}
 \label{lem:limesissoln}
 $(u,v)$ is a solution to \eqref{eq:limprob} in the sense of Definition \ref{def:soln}.
\end{lemma}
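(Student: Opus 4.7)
The strategy is to start from the weak formulation of the approximate system~\eqref{eq:epsprob} and pass to the limit $\eps=\eps_j\downto 0$ term by term, using exactly the convergence modes collected in Proposition~\ref{prop:conv} together with the initial-data estimate~\eqref{eq:init}. Concretely, fix an arbitrary $\phii\in\Cninf(\Ombar\times[0,\infty))$ and choose $T>0$ so large that $\mathrm{supp}\,\phii\subset \Ombar\times[0,T]$. Multiplying the first equation of~\eqref{eq:epsprob} by $\phii$ and integrating over $\Om\times(0,\infty)$ (where only $(0,T)$ contributes), two integrations by parts in space and one in time, using $\delny\ue\amrand=0$ and $\phii(\cdot,t)=0$ for $t\ge T$, yield
\[
 -\intninf\intom \ue\phii_t -\intom \une\phii(0) = \intninf\intom \ue\Lap\phii - \intninf\intom \ue\na\ve\cdot\na\phii + \kappa\intninf\intom\ue\phii -\my\intninf\intom\ue^2\phii -\eps\intninf\intom\ue^\zv\phii,
\]
and the analogous manipulation in the second equation of~\eqref{eq:epsprob} gives
\[
 -\intninf\intom\ve\phii_t -\intom \vne\phii(0) = -\intninf\intom\na\ve\cdot\na\phii - \intninf\intom\ve\phii +\intninf\intom\ue\phii.
\]

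For the $u$-equation I pass to the limit term by term. The linear terms $\intninf\intom\ue\phii_t$, $\intninf\intom\ue\Lap\phii$ and $\kappa\intninf\intom\ue\phii$ converge to the corresponding expressions with $u$ in place of $\ue$ by the strong convergence~\eqref{conv:ulz}. The initial-data term handles itself: by~\eqref{eq:init} we have $\une\to u_0$ in $\Lzom$, so $\intom\une\phii(0)\to\intom u_0\phii(0)$. The nonlinear absorption term $\my\intninf\intom\ue^2\phii$ passes to the limit as well, since~\eqref{conv:ulz} together with $\phii\in L^\infty$ implies $\ue^2\to u^2$ in $\Leom(\Om\times(0,T))$. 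The vanishing of $\eps\intninf\intom\ue^\zv\phii$ is immediate from~\eqref{conv:uzv}. Finally, the chemotactic cross term $\intninf\intom\ue\na\ve\cdot\na\phii$ is precisely of the type covered by~\eqref{conv:unav}: since $\na\phii\in L^\infty(\Om\times(0,T))^n$, weak $L^1$-convergence of $\ue\na\ve$ to $u\na v$ suffices to pass to the limit.

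For the $v$-equation the passage is easier: \eqref{conv:vlz} handles $\intninf\intom\ve\phii_t$ and $\intninf\intom\ve\phii$, \eqref{conv:vlzw} handles $\intninf\intom\na\ve\cdot\na\phii$ (since $\na\phii\in (L^2)^n$), and the source term $\intninf\intom\ue\phii$ converges by~\eqref{conv:ulz}. The initial-data contribution is again dealt with via~\eqref{eq:init}, now in the $\Wezom$-norm. Collecting both passages to the limit we recover~\eqref{eq:solndefu} and~\eqref{eq:solndefv} for the limit pair $(u,v)$. The regularity required by Definition~\ref{def:soln}, namely $u\in L^2_{loc}((0,\infty);\Lzom)$ and $v\in L^2_{loc}((0,\infty);\Wezom)$, is inherited from~\eqref{conv:ulz} and~\eqref{conv:vlzw} respectively.

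The only step that is not a routine consequence of linearity or a dominated-convergence argument is the chemotaxis term $\ue\na\ve\cdot\na\phii$: this is where both nonlinearity and the coupling between the two components appear. That potential obstacle has however already been overcome in Proposition~\ref{prop:conv} by combining the strong $L^2$-convergence of $\ue$ with the weak $L^2$-convergence of $\na\ve$ to produce the joint weak $\Leom$-convergence~\eqref{conv:unav}; once this is available, the remaining passages to the limit are straightforward.
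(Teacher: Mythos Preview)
Your proof is correct and follows exactly the approach indicated in the paper: test the approximate equations against $\phii$ and pass to the limit using the convergence statements of Proposition~\ref{prop:conv}. The paper's own proof is a two-line sketch to precisely this effect; you have simply spelled out the term-by-term verification in detail, including the treatment of the initial data via~\eqref{eq:init}.
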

\begin{proof}
 Take $\phii$ as specified in Definition \ref{def:soln} and test the equations of \eqref{eq:epsprob} against it. The convergence results of Proposition \ref{prop:conv} then produce \eqref{eq:solndefu} and \eqref{eq:solndefv}.
\end{proof}

\begin{remark}
 None of the arguments used for Proposition \ref{prop:conv} 
 and Lemma \ref{lem:limesissoln} depend on dimension $n$ nor on the specific values of $\my>0$, $\kappa\in\R$.
\end{remark}

\section{Eventual smoothness. Proof of Theorem \ref{thm:eins}}
\label{sec:smooth}
In the most important scenario of spatial dimension $3$, we can show that these solutions are not only solutions in some weak sense, but
possess the property of eventual smoothness: From some time on, they are classical solutions.
Our preparations from Section \ref{sec:small} that have provided boundedness of $(\ue,\ve)$ are the first step.

The next proposition transfers these properties to $(u,v)$.
\begin{proposition}
 \label{prop:limsolnfinallybounded}
Let $n=3$ and assume, $\kappa<\kappa_0$ with $\kappa_0$ from Proposition \ref{prop:wirdmalklein}. 
 With $T_*$ denoting the number from Proposition \ref{prop:uepsfinallybounded},
 \[
  u\in L^2_{loc}([T_*,\infty),\Wezom),\qquad u_t\in L^2_{loc}([T_*,\infty),(\Wezom)^*).
 \]
 Furthermore $u,v,\na v \in L^\infty(\Om\times[T_*,\infty))$.
\end{proposition}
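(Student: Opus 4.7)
The main idea is to transfer the uniform-in-$\eps$ bounds from Proposition \ref{prop:uepsfinallybounded} together with the improved estimates of Lemmata \ref{lem:ueinwezifbd} and \ref{lem:uetlzwezdual} through the convergence results of Proposition \ref{prop:conv}. I therefore fix an arbitrary $T>T_*$ and work on the time interval $[T_*,T]$; the ``loc'' statements will follow immediately.

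First I would establish the $\Li(\Om\times[T_*,\infty))$ bounds for $u$, $v$ and $\na v$. By Proposition \ref{prop:uepsfinallybounded}, there exists a constant $M$ (independent of $\eps$) such that $\norm[\Liom]{\ue(t)}+\norm[\Liom]{\ve(t)}+\norm[\Liom]{\na\ve(t)}\leq M$ for all $t>T_*$ and all $\eps>0$ (the exponential in the bound for $\ve$ is absorbed into $M$). For $u$ and $v$, the pointwise a.e. convergences \eqref{conv:uae} and \eqref{conv:vae} of Proposition \ref{prop:conv} immediately give $|u|\leq M$ and $|v|\leq M$ almost everywhere on $\Om\times[T_*,T]$. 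For $\na v$ I would argue that the uniform $\Liom$-bound on $\na\vej(t)$ implies boundedness of $\set{\na\vej}$ in $\Li(\Om\times[T_*,T])$, hence, along a further subsequence, $\na\vej\wstarto \na v$ in $\Li(\Om\times[T_*,T])$ (the limit being identified via \eqref{conv:vlzw}), which yields $\na v\in\Li(\Om\times[T_*,T])$ with the same bound.

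Next I would derive the gradient and time-derivative estimates. Since $\norm[\Liom]{\ue(t)}\leq M$ on $[T_*,T]$ uniformly in $\eps$, Lemma \ref{lem:ueinwezifbd} provides a constant $C_1>0$, independent of $\eps$, with $\norm[L^2((T_*,T);\Lzom)]{\na\ue}\leq C_1$. Combined with the $\Liom$-bound on $\ue$ this yields an $\eps$-uniform bound on $\ue$ in $L^2((T_*,T);\Wezom)$. By Lemma \ref{lem:uetlzwezdual}, also $\uet$ is $\eps$-uniformly bounded in $L^2((T_*,T);(\Wezom)^*)$. Extracting a further subsequence, $\uej\weakto \utilde$ in $L^2((T_*,T);\Wezom)$ and $\uej_{\,t}\weakto \utilde_t$ in $L^2((T_*,T);(\Wezom)^*)$ for some $\utilde$; identification $\utilde=u$ follows from the convergence $\uej\to u$ in $L^2(\Om\times(0,T))$ of \eqref{conv:ulz}, and the weak limit of $\uej_{\,t}$ coincides with the distributional $u_t$ by testing against $\Cninf(\Om\times(T_*,T))$.

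The most delicate point is really just the identification of weak limits and making sure that the distributional derivative of the strong/weak $L^2$-limit $u$ agrees with the weak $L^2((T_*,T);(\Wezom)^*)$-limit of $\uej_{\,t}$; however, this is a standard consequence of the fact that $\uej\to u$ in $\mathcal D'(\Om\times(T_*,T))$ and hence $\uej_{\,t}\to u_t$ distributionally, so the two limits must coincide. Lower semicontinuity of the norms under weak convergence then gives the quantitative bounds $\norm[L^2((T_*,T);\Wezom)]{u}\leq C_1+(T-T_*)^{1/2}|\Om|^{1/2}M$ and $\norm[L^2((T_*,T);(\Wezom)^*)]{u_t}\leq C_2$. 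Since $T>T_*$ was arbitrary, the $L^2_{loc}$-claims are proved.
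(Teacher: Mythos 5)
Your proposal is correct and follows essentially the same route as the paper: transfer the uniform $L^\infty$ bounds of Proposition \ref{prop:uepsfinallybounded} to $(u,v,\na v)$ via the convergences of Proposition \ref{prop:conv} (resp.\ weak-$*$ compactness in $L^\infty$), and obtain the $L^2_{loc}$ statements for $u$ and $u_t$ from the $\eps$-uniform bounds of Lemmata \ref{lem:ueinwezifbd} and \ref{lem:uetlzwezdual} together with weak compactness. You merely spell out the identification of the weak limits with $u$ and $u_t$, which the paper leaves implicit.
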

\begin{proof}
 On the interval $[T_*,\infty)$, from Proposition \ref{prop:uepsfinallybounded} we obtain boundedness of $\set{\ue}_{\eps\in(0,1)}$,$\set{\ve}_{\eps\in(0,1)}$ and $\set{|\na\ve|}_{\eps\in(0,1)}$ in $L^\infty(\Om\times[T_*,\infty))$ and hence can choose a sequence $\eps_j\downto 0$ such that $\ue, \ve, \na\ve$ are weak-*-convergent in this space.
 For $T>0$, boundedness of $\set{\ue}_\eps$ and $\set{\uet}_\eps$ in $L^2_{loc}([T_*,T_*+T],\Wezom)$ and $L^2_{loc}([T_*,T_*+T],(\Wezom)^*)$ respectively are guaranteed by Lemma \ref{lem:ueinwezifbd}
 and \ref{lem:uetlzwezdual} and the choice of a weakly convergent subsequence yields the assertion.
\end{proof}

\begin{corollary}
 Under the conditions of Proposition \ref{prop:limsolnfinallybounded}, 
 \label{cor:zeidler}
 $u\in C_{loc}([T_*,\infty),\Lzom)$.
\end{corollary}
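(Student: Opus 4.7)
The plan is to invoke a standard Gelfand-triple embedding theorem. With the Hilbert triple
\[
 W^{1,2}(\Om) \embeddedinto L^2(\Om) \embeddedinto (W^{1,2}(\Om))^*,
\]
the classical result (which is presumably the reason for the name of the corollary: see e.g.\ the relevant section in Zeidler's volume on nonlinear evolution equations, or Lions--Magenes, or Showalter) asserts that any function
\[
 w \in L^2(a,b; W^{1,2}(\Om)) \quad\text{with}\quad w_t \in L^2(a,b; (W^{1,2}(\Om))^*)
\]
admits (after modification on a null set) a representative in $C([a,b]; L^2(\Om))$, and moreover the squared $L^2$-norm is absolutely continuous with derivative $2\langle w_t, w\rangle_{(W^{1,2})^*,W^{1,2}}$.

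Now, Proposition \ref{prop:limsolnfinallybounded} provides exactly the two hypotheses for $w=u$ on every compact subinterval $[T_*, T_*+T]$ with $T>0$. Applying the embedding theorem on each such interval and noting that the continuous representative on overlapping intervals must coincide, we conclude $u\in C([T_*,T_*+T]; \Lzom)$ for every $T>0$, i.e. $u \in C_{loc}([T_*,\infty), \Lzom)$.

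The only mild subtlety is that we are dealing with the \emph{weak limit} $u$ constructed in Proposition \ref{prop:conv}, and the equality of $u_t$ in the distributional sense with the weak limit of $\uet$ needs to be the one actually appearing in the statement of Proposition \ref{prop:limsolnfinallybounded}; but this is guaranteed by the convergences \eqref{conv:ulz}, \eqref{conv:Lapv}, \eqref{conv:vtwlz} and \eqref{conv:unav} together with the weak formulation satisfied by $u$ (Lemma \ref{lem:limesissoln}), so $u_t$ really is the distributional time derivative of $u$ and lies in $L^2_{loc}([T_*,\infty); (W^{1,2}(\Om))^*)$. Hence there is no genuine obstacle, and the corollary follows directly from the cited embedding theorem.
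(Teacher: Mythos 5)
Your argument is exactly the paper's: Proposition \ref{prop:limsolnfinallybounded} supplies $u\in L^2([T_*,T_*+T];\Wezom)$ and $u_t\in L^2([T_*,T_*+T];(\Wezom)^*)$ on each compact subinterval, and the Gelfand-triple continuity theorem (the paper cites Proposition 23.23 of \cite{zeidler_IIA}) yields $u\in C([T_*,T_*+T];\Lzom)$, hence $u\in C_{loc}([T_*,\infty),\Lzom)$. Your extra remark on identifying the distributional time derivative is a harmless elaboration; the proof is correct and essentially identical to the paper's.
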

\begin{proof}
 For any $T>0$, $u\in L^2([T_*,T_*+T],\Wezom)$ and $u_t\in L^2([T_*,T_*+T],(\Wezom)^*)$. 
By Proposition 23.23 of \cite{zeidler_IIA}, $u$ is $L^2$-continuous on $[T_*,T_*+T]$.
\end{proof}

Actually, $u$ and $v$ are even H\"older continuous.
\begin{lemma}
 \label{lem:holdercts}
 Let $n=3$. Assume, $\kappa<\kappa_0$ with $\kappa_0$ from Proposition \ref{prop:wirdmalklein} and let $T_*$ be as in Proposition \ref{prop:uepsfinallybounded}. 
 There is $\al>0$ such that $u,v\in C^{\al,\frac\al2}_{loc}(\Ombar\times[T_*+1,\infty))$.
 Moreover, there is $C>0$ such that for every $T>T_*+1$, 
\[
 \norm[{C^{\al,\frac\al2}(\Ombar\times[T,T+1])}]{u}+\norm[{C^{\al,\frac\al2}(\Ombar\times[T,T+1])}]{v}\leq C.
\]
\end{lemma}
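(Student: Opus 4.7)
The plan is to deduce H\"older continuity from the $L^\infty$-bounds on $u$, $v$, and $\na v$ supplied by Proposition~\ref{prop:limsolnfinallybounded} via classical parabolic regularity theory. Because those bounds hold uniformly in time on $[T_*,\infty)$ and the PDE system is autonomous, any regularity estimate that I obtain by working on a window $[T,T+2]$ from data at time $T$ in $L^\infty$ will automatically be uniform in $T\ge T_*$, producing the claimed uniform estimate on $[T,T+1]$ for every $T>T_*+1$.

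For $v$, I would exploit that $v_t=\Lap v-v+u$ is linear in $v$ with bounded right-hand side $u$ and homogeneous Neumann data. Representing $v$ on the interval $[T,T+2]$ via the Neumann heat semigroup,
\[
v(t)=e^{(t-T)(\Lap-1)}v(T)+\int_T^t e^{(t-s)(\Lap-1)}u(s)\,\mathrm{d}s,
\]
and using $v(T)\in\Liom$ together with $u\in L^\infty(\Om\times[T_*,\infty))$, standard parabolic $L^p$-estimates yield membership of $v$ in the parabolic Sobolev space $W^{2,1}_p(\Om\times[T+1,T+2])$ for every $p<\infty$, with norm controlled independently of $T$. Choosing $p$ large and applying the Sobolev embedding $W^{2,1}_p\embeddedinto C^{\al,\frac\al2}$ for some $\al=\al(p)\in(0,1)$ delivers the H\"older bound for $v$ with a constant independent of $T$.

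For $u$, I would rewrite the first equation in divergence form as
\[
u_t=\na\cdot(\na u-u\na v)+\kappa u-\my u^2,
\]
so that the diffusion matrix is the identity (hence uniformly elliptic), the drift $b(x,t):=-\na v(x,t)$ belongs to $L^\infty(\Om\times[T_*,\infty))$, and the zero-order right-hand side $\kappa u-\my u^2$ is likewise in $L^\infty$ by Proposition~\ref{prop:limsolnfinallybounded}. Since $u\in L^2_{loc}([T_*,\infty);\Wezom)$ is a weak solution subject to homogeneous Neumann boundary conditions on a smooth domain, the De Giorgi--Nash--Moser H\"older regularity theorem for parabolic equations in divergence form with bounded measurable coefficients (see e.g.\ Ladyzhenskaya--Solonnikov--Ural'tseva, or Lieberman for the up-to-the-boundary version) supplies $\al\in(0,1)$ and $C>0$, depending only on the uniform $L^\infty$-bounds, on $\Om$, and on $\kappa$, $\my$, such that
\[
\norm[{C^{\al,\frac\al2}(\Ombar\times[T,T+1])}]{u}\leq C
\]
for all $T>T_*+1$. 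The main technical point to be pinned down is the clean application of the De Giorgi--Nash--Moser estimate \emph{up to} the Neumann boundary; this is handled either by a flattening-and-reflection argument or by direct appeal to the boundary H\"older estimates in Lieberman's monograph, while the uniformity in $T$ is automatic, since every ingredient depends only on the autonomous $L^\infty$-bounds on $u$, $v$, and $\na v$.
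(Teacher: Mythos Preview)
Your proposal is correct and follows essentially the same route as the paper: both arguments exploit the uniform $L^\infty$-bounds on $u$, $v$, $\na v$ from Proposition~\ref{prop:limsolnfinallybounded} and then invoke classical parabolic H\"older regularity for divergence-form equations with bounded coefficients, with uniformity in $T$ coming from the time-independence of those bounds. The only cosmetic differences are that the paper cites Porzio--Vespri's Theorem~1.3 uniformly for both components (treating $v$ via the same De~Giorgi--Nash--Moser machinery rather than your $W^{2,1}_p$/Sobolev route), and explicitly invokes Corollary~\ref{cor:zeidler} and Proposition~\ref{prop:limsolnfinallybounded} to justify that $u$ is a local weak solution in the required sense.
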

\begin{proof}
 Let $T_*$ be as in Proposition \ref{prop:uepsfinallybounded} and let $t\geq T_*$ such that $\norm[\Liom]{u(t)}\leq \norm[L^\infty(\Om\times(T_*,\infty))]{u}$ and $\norm[\Liom]{v(t)}\leq \norm[L^\infty(\Om\times(T_*,\infty))]{v}$, which is the case for almost every such $t$.

 Definition \ref{def:soln}, Corollary \ref{cor:zeidler} and Proposition \ref{prop:limsolnfinallybounded} enable us to interpret $u$ as a local weak solution in the sense of \cite{porzio_vespri_93} of the equation 
 \begin{equation}
\label{eq:u_regularitaet}
  \utilde_t-\na\cdot( \na\utilde-\utilde\na v) = \kappa u-\my u^2, 
 \end{equation}
 for $\utilde$ on $[T_*,\infty)$. 

 Using boundedness of $\kappa u-\my u^2$ and $\na v$, an application of Theorem 1.3 of \cite{porzio_vespri_93} ensures $u\in C^{\al',\frac{\al'}2}(\Ombar\times[T_*+\frac12,\infty))$ for some $\al'>0$.

Theorem 1.3 of \cite{porzio_vespri_93} additionally asserts that the norm $\norm[{C^{\al,\frac\al2}(\Ombar\times[t+\frac12,t+2])}]{u}$ can be estimated by a constant $C_u$ which depends on the $\Liom$-norm of $u(t)$ and some ``data'' of the problem, a term condensing structural information on the equation (such as exponents) and certain $L^r$-norms of coefficients and the right-hand-side in \eqref{eq:u_regularitaet}.

Important to note is that, due to Proposition \ref{prop:limsolnfinallybounded}, $u, v, \na v \in L^\infty(\Om\times[T_*,\infty))$ and therefore the restrictions of these functions to $\Ombar\times[t,t+2]$ are bounded in $L^\infty(\Ombar\times[t,t+2])$ independently of $t>T_*$.
Hence $C_u$ can be chosen independently of $t$.

Similar to Corollary \ref{cor:zeidler}, from \eqref{conv:vlz}, \eqref{conv:vtwlz} and \eqref{conv:vlzw}, we infer $v\in L^2_{loc}((0,\infty),\Wezom)\cap C((0,\infty),\Lzom)$ and  boundedness of $u,v$ on $[T_*+\frac12,\infty)$ imply, again by Theorem 1.3 of \cite{porzio_vespri_93} applied to the solution $v$ of 
\begin{equation}
 \label{eq:v_regularitaet}
 \vtilde_t - \na\cdot(\na \vtilde) = u-v
\end{equation}
for $\vtilde$, that $v\in C^{\al'',\frac{\al''}2}(\Ombar\times[t+1,t+2])$ for some $\al''>0$ -- and that 
\[
 \norm[{C^{\al'',\frac{\al''}2}(\Ombar\times[t+1,t+2])}]{v}\leq C_v,
\]
with some constant $C_v$ which can be chosen independently of $t$.

Letting $\al=\min\set{\al',\al''}$, deriving a suitable constant $C$ from the values of $C_u$ and $C_v$ and taking the arbitrariness of $t$ into account, the claim follows.
\end{proof}

Thanks to the regularity of $u$ and $v$ that we have gained so far, we can interpret $u$ and $v$ as generalized solutions in the sense of \cite{LSU} of
the homogeneous Neumann boundary value problem with initial value $u(T_*+1)$, $v(T_*+1)$ to \eqref{eq:u_regularitaet} or \eqref{eq:v_regularitaet}.
As the coefficients are bounded, 
these problems are known to be uniquely solvable \cite[Thm. III.5.1]{LSU}.
Therefore we can use existence theorems for smoother solutions to establish higher regularity of $u$ and $v$.

Theorem IV.5.3 of \cite{LSU} asserts the existence of $C^{2+\al,1+\frac\al2}$ solutions, 
albeit under stronger smoothness assumptions on the initial datum than we can guarantee so far.
In order to nevertheless apply this theorem, let us, for $t_0>0$, $T>t>0$, introduce a smooth monotone function $\chi_{t_0,t,T}\col[t_0,t_0+T]\to \R$ satisfying $\chi(t_0)=0$ and $\chi\equiv 1$ on $[t_0+t,t_0+T]$ as well as $\norm[C^1(t_0,t_0+T)]{\chi_{t_0,t,T}}\leq 1+\frac2t$.

\begin{proposition}
 \label{prop:evsmooth}
 Let $n=3$ and assume that $\kappa<\kappa_0$ with $\kappa_0$ from Proposition \ref{prop:wirdmalklein}.
 Then there are $T^*>0$ and $\al>0$ such that $u,v\in C^{2+\al,1+\frac\al2}_{loc}(\Ombar\times[T^*,\infty))$.

 Moreover, there exists $C>0$ such that for all $t>T^*$
\[
 \norm[{C^{2+\al,1+\frac\al2}(\Ombar\times[t,t+1])}]{u}+\norm[{C^{2+\al,1+\frac\al2}(\Ombar\times[t,t+1])}]{v}\leq C.
\]
\end{proposition}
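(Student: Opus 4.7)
The idea is to run a two-step parabolic Schauder bootstrap, using Lemma \ref{lem:holdercts} (uniform local H\"older continuity of $u,v$) together with the boundedness information of Proposition \ref{prop:limsolnfinallybounded}, and disposing of the initial-data regularity by means of the cutoff $\chi_{t_0,t,T}$. Throughout, all estimates will be made translation-invariant in time, so that constants are independent of the shift $t_0$.

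\textbf{Step 1: Schauder regularity of $v$.} Fix $t_0\ge T_*+1$ and set $\vtilde:=\chi_{t_0,1,2}\,v$ on $\Ombar\times[t_0,t_0+2]$. Then $\vtilde$ satisfies
\[
 \vtilde_t-\Laplace \vtilde \;=\; \chi_{t_0,1,2}\,(u-v)+\chi_{t_0,1,2}'\,v \quad\text{in }\Om\times(t_0,t_0+2),
\]
with $\delny \vtilde=0$ on $\dom\times(t_0,t_0+2)$ and $\vtilde(\cdot,t_0)=0$. By Lemma \ref{lem:holdercts} the right-hand side lies in $C^{\al,\frac{\al}{2}}(\Ombar\times[t_0,t_0+2])$ with a norm independent of $t_0$; compatibility conditions are trivially met because $\vtilde(\cdot,t_0)\equiv 0$. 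Theorem IV.5.3 of \cite{LSU} therefore gives a constant $C_v>0$ (depending only on the data, not on $t_0$) such that $\norm[C^{2+\al,1+\frac{\al}{2}}(\Ombar\times[t_0,t_0+2])]{\vtilde}\le C_v$. Since $\chi_{t_0,1,2}\equiv 1$ on $[t_0+1,t_0+2]$, this yields $v\in C^{2+\al,1+\frac\al2}(\Ombar\times[t_0+1,t_0+2])$ with a uniform bound. In particular $\nabla v$ and $\Laplace v$ are uniformly H\"older continuous on $\Ombar\times[T_*+2,\infty)$.

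\textbf{Step 2: Schauder regularity of $u$.} Expanding the divergence, the $u$-equation reads
\[
 u_t-\Laplace u + \na v\cdot \na u + (\Laplace v)\,u \;=\; \kappa u-\my u^2.
\]
By Step 1, the coefficients $\na v$ and $\Laplace v$ of this linear parabolic operator are in $C^{\al,\frac\al2}(\Ombar\times[T_*+2,\infty))$ with norms uniform in translates; by Lemma \ref{lem:holdercts} and Proposition \ref{prop:limsolnfinallybounded} the right-hand side $\kappa u-\my u^2$ is likewise uniformly H\"older. Now fix $t_1\ge T_*+3$ and set $\utilde:=\chi_{t_1,1,2}\,u$. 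Then $\utilde$ satisfies a linear parabolic equation of the same structure, with zero initial data, homogeneous Neumann boundary conditions, and inhomogeneity $\chi_{t_1,1,2}(\kappa u-\my u^2)+\chi'_{t_1,1,2}u$ which is uniformly $C^{\al,\frac\al2}$ in $t_1$. A second application of Theorem IV.5.3 of \cite{LSU} produces $C_u>0$, independent of $t_1$, with $\norm[C^{2+\al,1+\frac\al2}(\Ombar\times[t_1,t_1+2])]{\utilde}\le C_u$, whence $u\in C^{2+\al,1+\frac\al2}(\Ombar\times[t_1+1,t_1+2])$ with a $t_1$-independent bound. Choosing $T^*:=T_*+4$ and combining the two bounds furnishes both assertions of the proposition.

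\textbf{Main obstacle.} The chief technical point is ensuring that the hypotheses of Theorem IV.5.3 of \cite{LSU} -- in particular the H\"older regularity of all coefficients as well as the parabolic compatibility conditions between initial and boundary data -- are simultaneously met on each translated time slab, with uniform constants. The cutoff $\chi_{t_0,t,T}$ reduces the compatibility issue to the trivial case of vanishing initial data, while Lemma \ref{lem:holdercts} and Proposition \ref{prop:limsolnfinallybounded} together guarantee that coefficients, right-hand side and their H\"older seminorms on unit-length slabs stay under a bound independent of where the slab sits; the bootstrap then closes.
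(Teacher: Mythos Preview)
Your proof is correct and follows essentially the same approach as the paper's: a two-step Schauder bootstrap via Theorem~IV.5.3 of \cite{LSU}, first for $\chi v$ (yielding $\na v,\Lap v\in C^{\al,\frac\al2}$) and then for $\chi u$ with these as H\"older coefficients, the cutoff $\chi$ eliminating compatibility issues at the initial time. The only inessential differences are your choice of cutoff parameters ($\chi_{t_0,1,2}$ versus the paper's $\chi_{T_0,\frac12,2}$ and $\chi_{T_0+\frac12,\frac12,\frac32}$) and the resulting starting time $T^*$.
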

\begin{proof}
Let $T_*$ be as in Proposition \ref{prop:uepsfinallybounded} 
and $T_0>T_*+1$.
Let $\chi=\chi_{T_0,\frac12,2}$ as defined above and observe that $(\chi v)(T_0)=0$, $\delny (\chi v)\amrand=0$ and $\vtilde:=\chi v$ satisfies
\begin{equation}
 \label{eq:eq_higherregularityforv}
 \vtilde_t-\Lap \vtilde = \chi_t v+\chi u-\chi v\qquad\qquad\text{on }(T_0,T_0+2),
\end{equation}
a parabolic PDE with smooth coefficients and H\"older continuous right-hand side (due to Lemma \ref{lem:holdercts}).
Theorem IV.5.3 of \cite{LSU} in conjunction with the above-mentioned uniqueness property makes $\chi v$ an element of $C^{2+\al,1+\frac\al2}(\Ombar\times[T_0,T_0+2])$ and therefore $v\in C^{2+\al,1+\frac\al2}(\Ombar\times[T_0+\frac12,T_0+2])$, where, according to the aforementioned theorem, its norm can be estimated by the $C^{\al,\frac\al2}$-norm of the right-hand-side in \eqref{eq:eq_higherregularityforv} and therefore independently of $T_0>T_*+1$, cf. Lemma \ref{lem:holdercts}.

For an analogous procedure concerning $u$ let $\chi=\chi_{T_0+\frac12,\frac12,\frac32}$ and consider $\utilde=\chi u$, satisfying $\utilde(T_0+\frac12)=0$, $\delny \utilde\amrand=0$ and solving
\[
 \utilde_t-\Lap \utilde - \na \utilde\na v - \utilde\Lap v = \chi_t u +\chi (\kappa u-\my u^2),
\]
where the coefficients are H\"older continuous as well as the right hand side and, by the same argument as before, \cite[Thm. IV.5.3]{LSU} asserts $u\in C^{2+\al,1+\frac\al2}(\Ombar\times[T_0+1,T_0+2])$ with a $T_0$-independent estimate on the norm.
The claim follows upon the choice $T^*=T_0+1$ and due to the independence of the H\"older norm of $T_0$.
\end{proof}

After these preparations, the proof of our main result consists in nothing more than collating the right statements:
\begin{proof}[Proof of Theorem \ref{thm:eins}]
 Existence of a solution is given by Proposition \ref{prop:conv} in combination with Lemma \ref{lem:limesissoln}, eventual smoothness and bounds on the H\"older norms by Proposition \ref{prop:evsmooth}.
\end{proof}
\newcommand{\thetaa}{\vartheta}

\section{Asymptotic behaviour}
\label{sec:asymptotics}
Now that existence and smoothness of $(u,v)$ have been ensured, let us concentrate on the long time behaviour of solutions.

\subsection{The case $\kappa\leq0$. Proof of Theorem \ref{thm:kappazero}}
\label{sec:kappa_zero}
\begin{proof}[Proof of Theorem \ref{thm:kappazero}]
Let $\set{(u_{\eps_j},v_{\eps_j})}_{j\nat}$ be a sequence of solutions to \eqref{eq:epsprob} approaching $(u,v)$ in the sense of Proposition \ref{prop:conv}.
Let $\thetaa>0$. 

%
%
%
From Proposition \ref{prop:uepsfinallybounded} we can infer $\delta_0>0$ such that $K(\delta)$ from Proposition \ref{prop:uepsfinallybounded} satisfies $K(\delta)<\frac{\thetaa}3$ for any $\delta\in[0,\delta_0)$.

Now apply Lemma \ref{lem:nullstelle} with $\ny\in(0,\ny_0]$ so small that $\sqrt{\frac{4}{\Ceins(1+\frac1{4\ny})}}<\delta_0$ and choose $\kappatilde>0$ and $\eta\in(0,4]$ as provided therupon. In particular, this implies $\delta_\ny(\kappahat)\leq\delta_0$ 
for any $\kappahat\in(0,\kappatilde)$.

Let $\kappahat\in(0,\kappatilde)$ and let $T_0=T_0(\my,0,\kappahat)$ be as in Proposition \ref{prop:ODIulzvwev}.
%
%
As $\kappa\leq 0<\kappahat$, Proposition \ref{prop:uepsfinallybounded} implies that there is $T>0$ such that, independent of $j\nat$,
\begin{equation}
 \norm[\Liom]{u_{\eps_j}(t)}+\norm[W^{1,\infty}(\Om)]{v_{\eps_j}(t)}\leq 2K(\delta_\ny(\kappahat))+C e^{-(t-T)} \qquad\text{for all }t>T,
\end{equation}
where $C$ is a constant depending on the norm of the initial data $(u_0,v_0)$. 

Choose $T_\thetaa>T$ in such a way that $Ce^{-(T_\thetaa-T)}<\frac\thetaa3$ and that $u, v$ are continuous on $[T_\thetaa,\infty)$ by Theorem \ref{thm:eins}.

Our choice of $\delta_0$ thus shows that, independent of $j\nat$, 
\begin{equation*}
 \norm[\Liom]{u_{\eps_j}(t)}+\norm[W^{1,\infty}(\Om)]{v_{\eps_j}(t)}\leq 2\frac{\thetaa}3+\frac\thetaa3=\thetaa \qquad\text{for all }t>T_\thetaa.
\end{equation*}
Almost everywhere convergence of $(u_{\eps_j},v_{\eps_j})\to (u,v)$ (as stated by Proposition \ref{prop:conv} in \eqref{conv:uae}, \eqref{conv:vae}) and continuity of $u$ and $v$ hence imply that 
\begin{equation}
\label{eq:smallbound}
 \norm[\Liom]{u(t)}+\norm[\Liom]{v(t)}\leq \thetaa \qquad \text{for all }t>T_\thetaa.
\end{equation}

In conclusion, 
\[
 (u(t),v(t))\to 0 \qquad  \text{ as }t\to \infty
\]
in the sense of uniform convergence on $\Omega$.
\end{proof}

\subsection{Asymptotics for positive $\kappa$. Proof of Theorem \ref{thm:kappapositive}}
\begin{proof}[Proof of Theorem \ref{thm:kappapositive}]
 Under the condition of $\kappa$ being sufficiently small, 
 Theorem \ref{thm:eins} shows that the solutions constructed above enter some bounded set $B_{\my,\kappa}\sub (C^{2+\al}(\Ombar))^2$, where $\al>0$ is chosen as in Proposition \ref{prop:evsmooth}. 

 As to the statement about the diameter of $B_{\my,\kappa}$ in $\Liom\times W^{1,\infty}(\Om)$ as $\kappa\to 0$, we can proceed almost exactly as in the proof of Theorem \ref{thm:kappazero}: 
 Let $\thetaa>0$. From Proposition \ref{prop:uepsfinallybounded} we can infer $\delta_0>0$ such that $K(\delta)$ from Proposition \ref{prop:uepsfinallybounded} satisfies $K(\delta)<\frac{\thetaa}3$ for any $\delta\in[0,\delta_0)$. 
 The application of Lemma \ref{lem:nullstelle} with $\ny\in(0,\ny_0]$ satisfying $\sqrt{\frac{4}{\Ceins(1+\frac1{4\ny})}}<\delta_0$ provides $\eta\in(0,4]$ and $\kappatilde>0$. 
 Let $\kappahat\in(0,\kappatilde)$. \\
 We will prove that $\diam B_{\my,\kappa}\leq 2\thetaa$ if $\kappa<\kappahat$.\\
 Assume that $\kappa<\kappahat$ and let $T_0=T_0(\my,\kappa,\kappahat)$ be as in Proposition \ref{prop:ODIulzvwev}.
 As $\kappa<\kappahat$, Proposition \ref{prop:uepsfinallybounded} implies that there is $T>0$ such that, independent of $j\nat$,
 \begin{equation}
  \norm[\Liom]{u_{\eps_j}(t)}+\norm[W^{1,\infty}(\Om)]{v_{\eps_j}(t)}\leq 2K(\delta_\ny(\kappahat))+C e^{-(t-T)} \qquad\text{for all }t>T,
 \end{equation}
 where $C$ is a constant depending on the norm of the initial data $(u_0,v_0)$. 

 Choose $T_\thetaa>T$ in such a way that $Ce^{-(T_\thetaa-T)}<\frac\thetaa3$ and that $u, v$ are continuously differentiable on $[T_\thetaa,\infty)$ by Theorem \ref{thm:eins}.

 Our choice of $\delta_0$ thus shows that, independent of $j\nat$, 
\begin{equation*}
 \norm[\Liom]{u_{\eps_j}(t)}+\norm[W^{1,\infty}(\Om)]{v_{\eps_j}(t)}\leq 2\frac{\thetaa}3+\frac\thetaa3=\thetaa \qquad\text{for all }t>T_\thetaa.
\end{equation*}
We make use of the almost everywhere convergence of $(u_{\eps_j},v_{\eps_j})\to (u,v)$ (as stated by Proposition \ref{prop:conv} in \eqref{conv:uae}, \eqref{conv:vae}) and the fact that $\na v_{\eps_j}$ is essentially bounded by some constant $\Ctilde$ on $\Om\times[T_\thetaa,\infty)$ uniformly in $j$, which allows us to extract a $L^\infty$-weak$^*$-convergent subsequence leading to $\norm[\Liom]{\na v}\leq \Ctilde$. 

Together with the continuity of $u$, $v$ and $\na v$ these convergence results hence imply that 
\begin{equation*}
 \norm[\Liom]{u(t)}+\norm[W^{1,\infty}(\Om)]{v(t)}\leq \thetaa \qquad \text{for all }t>T_\thetaa.
\end{equation*}
In terms of $B_{\my,\kappa}$ this means 
\[
 B_{\my,\kappa}\subset B_\thetaa^{\Liom\times W^{1,\infty}(\Om)}(0)
\]
and hence $\diam (B_{\my,\kappa}) \leq 2\thetaa$ for sufficiently small $\kappa>0$.
\end{proof}

\end{document}